\def\<{{\langle}}
\def\>{{\rangle}}
\newtheorem{theorem}{Theorem}[section]
\newtheorem{lemma}[theorem]{Lemma}
\newtheorem{corollary}[theorem]{Corollary}
\newtheorem{proposition}[theorem]{Proposition}
\newtheorem{remark}[theorem]{Remark}
\newtheorem{definition}[theorem]{Definition}
\renewcommand{\theequation}{\arabic{section}.\arabic{equation}}
\newenvironment{proof}{{ \noindent {\bf Proof}.}}{\hfill$\blacksquare$\par}
\begin{document}
	
	\title{\vspace*{-1.5cm}
		Stochastic Stefan problem on moving hypersurfaces: an approach  by a new framework of nonhomogeneous monotonicity}
	
	\author{Tianyi Pan$^{1}$, Wei Wang$^{1}$, Jianliang Zhai$^{1}$, Tusheng Zhang$^{1,2}$}
	\footnotetext[1]{\, School of Mathematics, University of Science and Technology of China, Hefei, China. Email:pty0512@mail.ustc.edu.cn(Tianyi Pan), ww12358@mail.ustc.edu.cn(Wei Wang), zhaijl@ustc.edu.cn(Jianliang Zhai).}
	\footnotetext[2]{\, Department of Mathematics, University of Manchester, Oxford Road, Manchester
		M13 9PL, England, U.K. Email: tusheng.zhang@manchester.ac.uk.}
	\date{}
	\maketitle
	
	\begin{abstract}
		In this paper, we consider stochastic Stefan problem on moving hypersurfaces, which is written as
		\begin{numcases}{}\label{sfe}
			\partial^\bullet X_t+X_t\nabla_{\Gamma_t}\cdot v(t)=\Delta_{\Gamma_t}\beta(X_t)+B(t,X_t)\partial^\bullet W_t,\ \text{on}\ Q_T
			\\
			X_0=x_0\ \text{on}\ \Gamma_0,\nonumber
		\end{numcases}
		where $Q_T$ is a $n+1$-dimensional space-time hypersurface in $\mathbb{R}^+\times\mathbb{R}^{n+1}$ of the form
		\begin{align}\label{09022055}
			Q_T=\mathop{\scalebox{1.5}[1.5]{$\cup$}}_{t\in[0,T]} \{t\}\times\Gamma_t,
		\end{align}
		where for every $t\in[0,T]$, the section $\Gamma_t$ is a $n$-dimensional hypersurface in $\mathbb{R}^{n+1}$ and is diffeomorphic to $\Gamma_0$. The symbol $v$ denotes the speed of the movement of the hypersurface. $\partial^\bullet$ is the material derivative on $Q_T$, and the function $\beta$ is given by
\begin{numcases}{\beta(r)=}\label{1}
		\nonumber ar,\ \ r<0,\\
		0,\ \ 0\leq r\leq \rho,\\
		\nonumber b(r-\rho),\ \ r>\rho,
	\end{numcases}
	for some $a,b,\rho>0$.
		\par
		The purpose of this paper is to establish the well-posedness of stochastic partial differential equation (\ref{sfe}). Through a specially designed transformation,  it turns out we need to solve stochastic partial differential equations on a fixed domain with a new kind of nonhomogeneous monotonicity involving a family of time-dependent operators. This new class of SPDEs is of independent interest  and can also be applied to solve many other interesting models such as the stochastic $p$-Laplacian equations, stochastic Allen-Cahn equation and stochastic  heat equations on time-dependent domains or hypersurfaces.  (Monotone) Operator-valued calculus and geometric analysis of moving hypersurfaces play important roles in the study. A forthcoming result on the well-posedness of stochastic 2D Navier-Stokes equation on moving domains is also based on our framework.


	\end{abstract}
	
	\noindent
	{\bf Keywords and Phrases:} Stochastic partial differential equations, stochastic Stefan problem, moving domains, nonhomogeneous monotonicity, moving hypersurface.
	
	\medskip
	
	\noindent
	{\bf AMS Subject Classification:} Primary 35R37;  Secondary 60H15.
	\newpage
	\section{Introduction}
	\ \ \ \ In this paper, we aim to study the well-posedness of  stochastic Stefan problem on moving hypersurfaces. The Stefan problem (\ref{sfe}) models the melting or freezing process of ice-water mixture on a moving hypersurface, with a random heating source.  Stefan problem is one of the most classical and well known free boundary problems. Let $\theta:Q_T\rightarrow\mathbb{R}$ be the temperature function.  For any $t\in[0,T]$ we can decompose the hypersueface $\Gamma_t$ into $\Gamma_t=\Gamma_t^l\cup\Gamma_t^s\cup\Gamma_t^0$ such that
	\begin{align*}
		&\theta(t,x)>0\ \text{in}\ \Gamma_t^l,\ \theta(t,x)=0\ \text{in}\ \Gamma_t^0\ \text{and}\ \theta(t,x)<0\ \text{in}\ \Gamma_t^s.
	\end{align*}
	Similar to (\ref{09022055}), we define
	\begin{align*}
		Q^l_T=\mathop{\scalebox{1.5}[1.5]{$\cup$}}_{t\in[0,T]} \{t\}\times\Gamma^l_t,\ \
		Q^s_T=\mathop{\scalebox{1.5}[1.5]{$\cup$}}_{t\in[0,T]} \{t\}\times\Gamma^s_t\ \text{and}\ 		Q^0_T=\mathop{\scalebox{1.5}[1.5]{$\cup$}}_{t\in[0,T]} \{t\}\times\Gamma^0_t.
	\end{align*}
Then, $\theta$ satisfies the following stochastic partial differential equation on the moving surfaces:
	\begin{numcases}{}\label{sfe-1}
		\nonumber\partial^\bullet \theta(t,x)+\big(\theta(t,x)+1\big)\nabla_{\Gamma_t}\cdot v(t,x)=\Delta_{\Gamma_t}\theta(t,x)+B\big(t,\theta(t,\cdot)\big)(x)\partial^\bullet W_t,\ \text{in}\ Q^l_T,
		\\
		\nonumber\partial^\bullet \theta(t,x)+\theta(t,x)\nabla_{\Gamma_t}\cdot v(t,x)=\Delta_{\Gamma_t}\theta(t,x)+B\big(t,\theta(t,\cdot)\big)(x)\partial^\bullet W_t,\ \text{in}\ Q^s_T,\\
		-\big(\nabla_{\Gamma_t}\theta^l(t,x)-\nabla_{\Gamma_t}\theta^s(t,x)\big)\cdot\mu=V\ \text{on}\ Q_T^0,\\
		\nonumber\theta(t,x)=0\ \text{on}\ Q_T^0,\\
		\theta(0)=x_0\ \text{on}\ \Gamma_0.\nonumber
	\end{numcases}
	In the formulation above, $\mu(t)$ denotes the unit conormal vector on $\Gamma_t^0$ pointing into $\Gamma_t^l$ , $V(t)$ is the conormal velocity of $\Gamma_t^0$ and $v(t)$ is the velocity of $\Gamma_t$.
\vskip 0.2cm
With the transformation $\theta=\beta(X)$ (in the case $a=b=\rho=1$), one can show that the random field $X(t,x)$ is a solution to  equation (\ref{sfe}). For the formal derivation of (\ref{sfe}) and (\ref{sfe-1}), we refer the readers to Subsection 5.2 of \cite{DE} and \cite{AE} for the details. We remark that the surface flux $q$ in \cite{DE}, representing the heat flux per unit surface, is taken as $\nabla_{\Gamma_\cdot} \beta(X)$ in the current context. Many papers, e.g. \cite{BD,KM,KMS,KZS}, are devoted to the study of the stochastic Stefan problems on fixed domains. As far as we know, this is the first paper to study this problem on moving domains/surfaces.\par
	
	Domains that evolve over time arise naturally in applications where the object under study changes shape with time. Many physical phenomena, such as thermal expansion and contraction, as well as fluid movement, exhibit this characteristic. It is very important to understand how the object behaves during this shape-changing process. Such behavior often involves an intricate interaction between the equation and the boundary, making it a challenging topic to study.
	\par
	In this paper, we aim to establish the well-posedness of stochastic Stefan problem on time-dependent hypersurfaces driven by multiplicative noise. While there exist some results on the well-posedness of stochastic partial differential equations (SPDEs) on moving domains (a family of time-dependent open sets $\{\mathcal{O}_t\}_{t\in[0,T]}$ ) with additive noise, see e.g. \cite{WZZ,YYK,ZH}, their approach is similar to the deterministic case. When multiplicative noise is considered, uniqueness becomes a challenge. It would rely on an $\mathrm{It\hat{o}}$ formula for SPDEs on time-dependent domains which generally do not exist.
	We mention that in \cite{PWZZ}, a family  $\big\{e_k(t,x), (t,x)\in \mathcal{D}_T\big(\dot{=}\cup_{t\in [0, T]}\{t\}\times \mathcal{O}_t\big) \big\}_{k\geq 1}$ of eigenfunctions of the Laplace operator $\Delta_t$ with Dirichlet boundary on $L^2(\mathcal{O}_t)$, are used to obtain the well-posedness of the solution $u(t,x):\mathcal{D}_T\rightarrow\mathbb{R}$ to the stochastic heat equation with multiplicative noise on a time-dependent domain as well as deriving the $\mathrm{It\hat{o}}$ formula for $\|u(t)\|_{L^2(\mathcal{O}_t)}^2$. However,  this method depends on the very specific shape of the domain, imposing strict assumptions on the time-evolution of the domain.     \par
	To solve the stochastic Stefan problem on moving hypersurfaces, we intend to use a specially designed transformation and solve the eqaution obtained by transformation. This method has been extensively used in the deterministic contexts, see e.g. \cite{MT,K,HH,KRS,FKW,KMNW} for details. In this paper ,we first establish a correspondence between the solutions of the Stefan problem on moving surfaces and the solutions of the transformed stochastic partial differential equations (SPDEs) on a fixed domain. This is done by obtaining  the correspondence between the classes of test functions in the weak formulation of Stefen probelm on moving hypersurfaces and the weak formulation of transformed SPDEs on a fixed domain. The next goal is to obtain the well-posedness of the transformed SPDEs on the fixed domain. However, it turns out that the coefficients of the transformed SPDE do not satisfy the usual (fully) local monotonicity conditions imposed in the existing literature, see \cite{LR, LR2, NTT, RSZ}. Instead, the transformed SPDE satisfies a kind of nonhomogeneous monotonicity which we now briefly describe.

	Consider SPDEs under the variational setting in a Gelfand triple $V\hookrightarrow H\ \widetilde{=}\ H^*\hookrightarrow V^*$ written as
	\begin{numcases}{}\label{spde}
		dX_t=A(t,X_t)dt+ B(t,X_t)dW_t,\ t\in (0,T],\\
		X_0=\xi\in H,\nonumber
	\end{numcases}
	where $V$ is a Banach space continuously embedded into a Hilbert space $H$,
	$$A: [0,T] \times V\rightarrow V^*,\ \ B:[0,T]\times V\rightarrow\ L_2(U,H)$$ are measurable maps, $L_2(U,H)$ is the space of Hilbert-Schmidt operators from a separable Hilbert space $U$ to $H$ and $W$ is an $U$-cylindrical Wiener process. The operator $A(\cdot,\cdot)$, $B(\cdot,\cdot)$ satisfy the nonhomogeneous monotonicity (see Subsection 2.2 for more details):
	
	There exists a locally bounded measurable function $\rho:V\rightarrow\mathbb{R}$, nonnegative constants $\gamma$ and $C$ such that for $ a.e.$ $t\in[0,T]$, the following inequalities hold for any $u,v\in V$,
	\begin{align}
		\nonumber&2\big\langle \iota_t^*A(t,u)-\iota_t^*A(t,v), u-v\big\rangle+\|B(t,u)-B(t,v)\|^2_{L_2(U,H^t)}\\\label{08280034}\leq
		&\big[f(t)+\rho(v)\big]|u-v|_t^2,\\&
		|\rho(u)|\leq C(1+\|u\|_V^\alpha)(1+|u|^\gamma).\nonumber
	\end{align}
	Here, $\{\iota_t^*\}_t\geq 0$ is a family of symmetric bounded operators from $H$ into $H$, associated with a family of inner products $\{(\cdot,\cdot)_t\}_{t\in[0,T]}$ to be specialized in Subsection 2.2. We now give an interpretation for the nonhomogeneous monotonicity. Let $\{(\cdot,\cdot)_t\}_{t\in[0,T]}$ be a family of inner products equivalent to the original inner product $(\cdot,\cdot)$ on $H$. Then, we denote $H^t$ the Hilbert space $H$ equipped with the time-dependent  inner product $(\cdot,\cdot)_t$.
	Roughly speaking, nonhomogeneous monotonicity refers to  monotonicity with regard to the time-dependent Gelfand triple $V\hookrightarrow H^t\hookrightarrow V^*$ for each $t\in[0,T]$. It is a transformed version of the family of the time-dependent  Gelfand triple $V(t)\hookrightarrow H(t)\hookrightarrow V(t)^*$ for SPDEs on moving domains/hypersurfaces.
	\par Therefore, the first objective of this paper is to solve SPDEs with nonhomogeneous monotonicity, which is of independent interest itself. The well-posedness of SPDEs with monotone operator was first established by Pardoux in \cite{P}, and has since  been widely studied in the literature. We refer the readers to  \cite{LR, LR2, NTT, RSZ} and references therein for subsequent contributions on this topic. To solve SPDEs with nonhomogeneous monotonicity,  the conventional Galerkin approximating scheme becomes ineffective. Instead, we consider a family of time-dependent projections $\{P_n(t):H\rightarrow H\}_{n\geq 1}$, which are orthogonal projections onto finite-dimensional subspaces with respect to the inner product $(\cdot,\cdot)_t$ for each $t\in[0,T]$, and construct a new family of approximating solutions. Since $\{P_n(t):H\rightarrow H\}_{n\geq 1}$ are no longer self-adjoint under the original inner product $(\cdot,\cdot)$, many proofs for the convergence of the approximating solutions in the case of usual monotonicity do not work. A particular role in our proofs is played by the modified version of $\mathrm{It\hat{o}}$ formula we proved for the time-dependent norms $|\cdot|_t^2$ of a $V^*$-valued semimartingale.
	
	\par
	The next part is to  apply the new framework of nonhomogeneous monotonicity to the study of stochastic Stefan problem. We like to stress that the methodology used in \cite{AE,ACDE} for solving  the deterministic Stefan problem on moving hypersurfaces falls short in the stochastic case since their proofs heavily rely on the differentiability of the solution with respect to  time. Through a designed transformation, we introduced a family of inner product $(\cdot, \cdot)_t$ on a suitable Hilbert space and obtained  the necessary properties required in the framework of nonhomogeneous monotonicity. The proofs involve exploiting the geometric structure of the hypersurfaces and the uniform in time regularity estimate for the solutions of  a family of elliptic equations on hypersurfaces.
	We next give a  proof for the correspondence between the solutions of stochastic Stefan problem on the moving hypersurfaces  and the solution of the transformed SPDEs on a fixed domain. As a final step to obtain the well-posedness of stochastic Stefan problem, we proved that the transformed SPDE falls into the framework of nonhomogeneous monotonicity satisfying all the assumptions needed.
	
	Finally we like to mention  that the new framework of nonhomogeneous monotonicity  we proposed can also be applied to solve many other interesting models such as the stochastic $p$-Laplacian equations, one-dimensional Burgers equation, one-dimensional Allen-Cahn equation and the strong solution of the heat equations on a time-dependent domain or hypersurfaces etc. In particular, we would like to mention that based on our framework, we successfully solve the two dimensional stochastic Navier-Stokes equation on moving domains. This result is contained in a forthcoming preprint.
	\vskip 0.3cm
	\par
	The remaining part of the paper are organized  as follows. Subsection 2 is devoted to the well-posedness of SPDEs with nonhomogeneous monotonicity.  We  introduce the assumptions on the family of  inner products $(\cdot,\cdot)_t$ and establish $\mathrm{It\hat{o}}$'s formula for the time-dependent norms $|\cdot|_t^2$ of a semimartingale. We present the precise  assumptions on the coefficients in the framework of nonhomogeneous monotonicity, and prove the existence and uniqueness of the solutions of the SPDEs in this new framework.  In Section 3, we study the stochastic Stefan problem on moving hypersurfaces,
	establish the equivalence of the stochastic Stefan problem to an SPDE with nonhomogeneous monotonicity on a fixed domain and verify all the conditions required to obtain the well-posedness of the problem. Finally, we give an $\mathrm{It\hat{o}}$ formula for the norm of the solution. \par
	\vskip 0.3cm
	Here are some conventions used throughout this paper: \par
	(i)$C$
	denotes a generic positive constant whose value may vary from line to line.
	Other constants will be denoted as $C_1$, $C_2$,$\cdots$. They are all positive, but
	their values are not important. The dependence of constants on parameters,
	if necessary, will be indicated, e.g. $C_T$ .\par
	(ii) We will denote $\mathcal{L}(X,Y)$ to be the space of bounded linear operators from $X$ to $Y$, with its norm denoted by $\|\cdot\|_{\mathcal{L}(X,Y)}$. If $X=Y$, we will use $\mathcal{L}(X)$ instead of $\mathcal{L}(X,Y)$, with its norm denoted by $\|\cdot\|_{\mathcal{L}(X)}$.\par
	(iii)  We will sometimes use Einstein summation convention for simplicity.
	
	\section{SPDEs with non-homogeneous monotonicity}
	\ \ \ \ This section introduces a novel framework to establish the well-posedness of SPDEs with the so called non-homogeneous monotonicity, which will be explained in Subsection 2.2 in detail. This new framework is designed mainly to solve SPDEs on moving surfaces/time-dependent domains.
	
	\subsection{A motivating example}
	\qquad To give the motivation for the introduction of the new framework of nonhomogeneous monotonicity, we consider the heat equation on time-dependent domains. The discussions in this subsection are carried out by formal calculations.  Rigorous arguments will be presented in Section 3. Consider the heat equation on a time-dependent domain $\mathcal{D}_T$:
	\begin{numcases}{}\label{he}
		\nonumber\frac{\partial u}{\partial t}(t,x)=\Delta_t u(t,x),\ (t,x)\in \mathcal{D}_T,\\
		u(t,x)=0,\ (t,x)\in \partial \mathcal{D}_T,\\
		u(0,x)=u_0(x),\ x\in\mathcal{O}_0.\nonumber
	\end{numcases}
	Where  the non-cylindrical domain $\mathcal{D}_T$ is written as
	\begin{align}
		\mathcal{D}_T=\ \mathop{\scalebox{1.5}[1.5]{$\cup$}}_{t\in[0,T]} \{t\}\times\mathcal{O}_t,
	\end{align}
	where for any $t\in[0,T]$, $\mathcal{O}_t$ is a bounded open set in $\mathbb{R}^d$, $\Delta_t$ is the Laplace operator on the domain $\mathcal{O}_t$. Moreover, we assume that there exists a map $r\in C^1\big([0,T]\times\overline{\mathcal{O}}_0,\mathbb{R}^d\big)$ such that
	\begin{itemize}
		\item [(i)] For any $t\in[0,T]$ fixed, $r(t,\cdot)$ maps $\mathcal{O}_0$ onto $\mathcal{O}_t$. Moreover, $r(t,\cdot):\mathcal{O}_0\rightarrow\mathcal{O}_t$ is a $C^2$ diffeomorphism.
		\begin{spacing}{-1.0}$\text{ }$ \end{spacing}
		\item [(ii)] For any $i,j,k\in\{1,2,...,d\}$, $\frac{\partial r_i}{\partial t}$, $\frac{\partial r_i}{\partial y_j}$, $\frac{\partial^2 r_i}{\partial t\partial y_j}$  and $\frac{\partial^2 r_i}{\partial y_j\partial y_k}\in C([0,T]\times\bar{\mathcal{O}}_0,\mathbb{R})$.
	\end{itemize}
	\begin{remark}\label{04261800}
		Let's denote by $\bar{r}(t,\cdot):\mathcal{O}_t\rightarrow\mathcal{O}_0$, the inverse map of $r(t,\cdot)$. By our assumptions  (i), (ii) and the chain rule,  we  easily see that
		\begin{itemize}
			\item [(iii)] For any $i,j,k\in\{1,2,...,d\}$, $\frac{\partial \bar{r}_i}{\partial t}$, $\frac{\partial \bar{r}_i}{\partial x_j}$, $\frac{\partial^2 \bar{r}_i}{\partial t\partial x_j}$ and $\frac{\partial^2 \bar{r}_i}{\partial x_j\partial x_k}\in C(\overline{\mathcal{D}}_T,\mathbb{R})$.
		\end{itemize}
	\end{remark}
	For a solution $u:\mathcal{D}_T\rightarrow\mathbb{R}$ satisfying (\ref{he}), define $v(\cdot,\cdot):[0,T]\times\mathcal{O}_0\rightarrow \mathbb{R}$ by $v(t,y)=u\big(t,r(t,y)\big)$.
	Changing the coordinate by the space-time diffeomorphism
	$$ (t,x)\in \mathcal{D}_T\rightarrow (t,y)\dot{=}\big(t,\bar{r}(t,x)\big)\in [0,T]\times\mathcal{O}_0 $$
	and using the chain rule, we can show that $v(t,y)$ satisfies the following equation on the cylindrical domain $[0,T]\times\mathcal{O}_0$,
	\begin{numcases}{}\label{he2}
		\frac{\partial v}{\partial t}(t,y)=\frac{\partial}{\partial y_j}\big( a_{jk}(t,y)\frac{\partial v}{\partial y_k}(t,y)\big)+(b^1(t,y)+b^2(t,y))\cdot\nabla_y v(t,y)\nonumber,\\  \hfill(t,y)\in(0,T]\times\mathcal{O}_0,\nonumber\\
		v(t,y)=0,\ (t,y)\in (0,T]\times\partial{\mathcal{O}_0},\nonumber\\
		v(0,y)=v_0(y),\ y\in\mathcal{O}_0,
	\end{numcases}
	where $$a_{jk}(t,y)=\sum_{i=1}^{d}\frac{\partial \bar{r}_j}{\partial x_i}(t,r(t,y))\frac{\partial \bar{r}_k}{\partial x_i}(t,r(t,y)),$$ \ $$\ \  b^1_k(t,y)=-\sum_{j=1}^{d}\frac{\partial a_{jk}}{\partial y_j}(t,y)+\Delta_x \bar{r}_k(t,r(t,y))\ \text{and}\  b^2_k(t,y)=\frac{\partial \bar{r}_k}{\partial t}(t,r(t,y)).$$
	Observe that if we define the operators:
	$$A_i(\cdot,\cdot):[0,T]\times H_0^1(\mathcal{O}_0)\rightarrow H^{-1}(\mathcal{O}_0),\ i=1,2,$$
	by
	$$\langle A_1(t,v),w\rangle=-\int_{\mathcal{O}_0} a_{jk}(t,y)\frac{\partial v}{\partial y_k}(y)\frac{\partial w}{\partial y_j}(y)dy+\int_{\mathcal{O}_0}b^1(t,y)\cdot\nabla_yv(y) w(y)dy,$$
	$$\langle A_2(t,v),w \rangle=\int_{\mathcal{O}_0} b^2(t,y)\cdot\nabla_y v(y) w(y) dy. $$
	then when $v_0\in L^2(\mathcal{O}_0)$, (\ref{he2}) can be written on a Gelfand triplet $H_0^1(\mathcal{O}_0)\hookrightarrow L^2(\mathcal{O}_0)\hookrightarrow H^{-1}(\mathcal{O}_0)  $  in the variational setting, given by
	\begin{align}\label{04262129}
		dv(t)=A_1(t,v(t))dt+A_2(t,v(t))dt,\ v(0)=v_0.
	\end{align}
	Note that
	\begin{align}\label{04262111}
		-\int_{\mathcal{O}_t} \nabla \bar{v}^t(x)\cdot\nabla \bar{w}^t(x) dx= _{H^{-1}(\mathcal{O}_0)}\langle A_1(t,v), \big(w|\mathrm{det}(\frac{\partial r_i}{\partial y_j})|(t,\cdot)\big)\rangle_{H_0^1(\mathcal{O}_0)},
	\end{align}
	for any $v,w\in H_0^1(\mathcal{O}_0)$, where  $\bar{v}^t(x)=v\big(\bar{r}(t,x)\big)$, $\bar{w}^t(x)=w\big(\bar{r}(t,x)\big)$ and $(w|det(\frac{\partial r_i}{\partial y_j})|(t,\cdot))(y)=w(y)|det\big(\frac{\partial r_i}{\partial y_j}\big)|(t,y)$. It follows from (\ref{04262111}) that it is the operator $|det\big(\frac{\partial r_i}{\partial y_j}\big)(t,\cdot)|A_1(t,\cdot) $ that inherits the monotonicity of $-\Delta_t$ on the Gelfand triple $H_0^1(\mathcal{O}_t)\hookrightarrow L^2(\mathcal{O}_t)\hookrightarrow H^{-1}(\mathcal{O}_t)$, rather than the operator $A_1(t,\cdot) $. $A_1(t,\cdot)$ is the operator with the so called nonhomogeneous monotonicity. \par
	
	In the next subsection, we will lay the groundwork for introducing nonhomogeneous monotonicity by establishing a family of equivalent inner products on a Hilbert space, which are  compatible with the inner products on the time-dependent Hilbert spaces associated with  the function spaces on time-dependent domains. In the next subsection, we will give some preliminaries for the introduction on the concept ``nonhomogeneous monotonicity". 
	\subsection{Assumptions on the inner products}
	\ \ \ \ Let $H$ be a separable Hilbert space with inner product $( \cdot,\cdot )$ and norm $|\cdot|$. Let $V$ be a reflexive Banach space that is continuously and densely embedded into $H$ and let $V^*$ be the dual space of $V$. We denote the norms of $V$ and $V^*$ by $\|\cdot\|_V$ and $\|\cdot\|_{V^*}$ respectively. By the Riesz representation, the Hilbert space $H$ can be identified with its dual space $H^*$. Thus we obtain a Gelfand triple
	$$V\subseteq H\subseteq V^*.$$
	We denote the dual pair between $f\in V^*$ and $v\in V$ by $\langle f,v \rangle$. It is easy to see that when $u\in H\subseteq V^*$, $(u,v)=\langle u,v\rangle$, $\forall\   v\in V$.\par
	Assuming that there exists a family of inner products $(\cdot,\cdot)_t$ on $H$ indexed by $t\in[0,T]$, such that for $t=0$, the inner product $(\cdot,\cdot)_0$ coincides with the original inner product $(\cdot,\cdot)$ on $H$. Moreover, we assume that $(\cdot,\cdot)_t, t\in[0,T]$ satisfies the following conditions:
	\begin{itemize}
		\item[\hypertarget{C1}{{\bf (C1)}}] There exists a constant $c_1\geq 1$ such that for any $t\in[0,T]$, the norm $|\cdot|_t$ generated by the inner product $(\cdot,\cdot)_t$ satisfies,
		\begin{align*}
			\frac{1}{c_1}|x|\leq |x|_t\leq c_1|x|,\text{ for any $x\in H$}. 
		\end{align*}
		\item[\hypertarget{C2}{{\bf (C2)}}] There exists a family of self-adjoint bounded operators $\{\Phi(t)\}_{t\in[0,T]}$ on $H$ such that
		\begin{align*}
			\int_{0}^{T}\|\Phi(t)\|_{\mathcal{L}(H)}dt<\infty\  ,
		\end{align*}
		and for any $t\in[0,T]$,
		\begin{align}\label{08201354}
			|x|_t^2-|x|^2= \int_{0}^{t}(x,\Phi(s)x)ds. 
		\end{align}
	\end{itemize}
	
	\begin{remark}
		We cannot deduce that $\Phi(s):H\rightarrow H$ is  self-adjoint merely from (\ref{08201354}). However, if we replace $\Phi(s)$ by $\frac{1}{2}\big(\Phi(s)+\Phi^*(s)\big)$, (\hyperlink{C2}{C2}) is satisfied.
	\end{remark}
	
	To state other conditions, we need to introduce some notations. By condition (\hyperlink{C1}{C1}) and the Lax-Milgram theorem, for every $t\in[0,T]$, there exists a self-adjoint bounded operator $\iota_t^*:H\rightarrow H$ such that $(\iota_t^* x,y)=(x,y)_t$ for any $x,y\in H$. And it is easy to see that $\|\iota_t^*\|_{\mathcal{L}(H)}\leq c_1^2$.
	The following lemma follows easily from (\hyperlink{C1}{C1}) and (\hyperlink{C2}{C2}).
	\begin{lemma}\label{01201913}
		For the operators $\iota_t^*$ and $\Phi(t)$ on $H$ defined above, we have\\
		(i) For any $t\in[0,T]$, $x\in H$, $\iota_t^*x=x+\int_{0}^{t}\Phi(s)xds$.\\
		(ii) For any $s<t$ in $[0,T]$, $\|\iota_t^*-\iota_s^*\|_{\mathcal{L}(H)}\leq \int_{s}^{t}\|\Phi(r)\|_{\mathcal{L}(H)}dr$.\\
		(iii) For any $t\in[0,T]$, $\iota_t^*$ is invertible on $H$, with its inverse operator denoted by $\iota_{-t}^*$, satisfying $\|\iota_{-t}^*\|_{\mathcal{L}(H)}\leq c_1^2$ for any $t\in[0,T]$. Here $c_1>0$ is  the constant introduced in (\hyperlink{C1}{C1}).
	\end{lemma}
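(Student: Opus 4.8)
The plan is to prove all three parts of Lemma~\ref{01201913} directly from conditions (\hyperlink{C1}{C1}) and (\hyperlink{C2}{C2}), proceeding in the stated order since each part feeds into the next. For part (i), I would start from the defining relation $(\iota_t^* x, y) = (x,y)_t$ and the polarization-type consequence that the inner products differ by the quantity in (\ref{08201354}). Specifically, for arbitrary $x,y \in H$ I would write
\begin{align*}
(\iota_t^* x, y) = (x,y)_t = (x,y) + \big[(x,y)_t - (x,y)\big],
\end{align*}
and then argue that the bracketed term equals $\int_0^t (\Phi(s)x, y)\,ds$. The subtlety here is that (\ref{08201354}) is stated only for the \emph{norm} $|x|_t^2 - |x|^2$, i.e.\ the diagonal case $x=y$, so I would need to recover the bilinear identity by polarization. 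Since each $\Phi(s)$ is self-adjoint and the map $x \mapsto |x|_t^2$ is a quadratic form, polarization applied to both sides of (\ref{08201354}) yields $(x,y)_t - (x,y) = \int_0^t (\Phi(s)x, y)\,ds$ for all $x,y$. Fubini/linearity lets me pull the integral outside the inner product, so $(\iota_t^* x, y) = \big(x + \int_0^t \Phi(s)x\,ds,\; y\big)$ for all $y \in H$; since $H$ is Hilbert this forces $\iota_t^* x = x + \int_0^t \Phi(s)x\,ds$ (the Bochner integral being well-defined because $\int_0^T \|\Phi(s)\|_{\mathcal L(H)}\,ds < \infty$).

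For part (ii), I would simply subtract the representations from part (i): for $s<t$,
\begin{align*}
(\iota_t^* - \iota_s^*) x = \int_s^t \Phi(r) x\,dr,
\end{align*}
and then estimate the operator norm by $\|(\iota_t^*-\iota_s^*)x\| \le \int_s^t \|\Phi(r)x\|\,dr \le \big(\int_s^t \|\Phi(r)\|_{\mathcal L(H)}\,dr\big)|x|$, taking the supremum over unit vectors. This is routine once (i) is in hand.

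Part (iii), the invertibility of $\iota_t^*$, is the part I expect to require the most care, though the argument is still short. The key observation is that condition (\hyperlink{C1}{C1}) gives the two-sided bound $\frac{1}{c_1}|x| \le |x|_t \le c_1|x|$, hence $\frac{1}{c_1^2}|x|^2 \le (\iota_t^* x, x) \le c_1^2 |x|^2$. The lower bound shows $\iota_t^*$ is bounded below (coercive), so it is injective with closed range; combined with self-adjointness this gives surjectivity, so $\iota_t^*$ is a bijection and by the bounded inverse theorem its inverse $\iota_{-t}^*$ is bounded. To get the quantitative bound $\|\iota_{-t}^*\|_{\mathcal L(H)} \le c_1^2$, I would use coercivity directly: for any $y \in H$, writing $x = \iota_{-t}^* y$ gives $\frac{1}{c_1^2}|x|^2 \le (\iota_t^* x, x) = (y, x) \le |y|\,|x|$, whence $|x| \le c_1^2 |y|$, i.e.\ $|\iota_{-t}^* y| \le c_1^2 |y|$. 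Alternatively one can invoke the Lax--Milgram theorem applied to the bounded coercive bilinear form $(\cdot,\cdot)_t$, which is perhaps the cleanest route and consistent with how $\iota_t^*$ was introduced in the first place. The main obstacle throughout is really just the careful bookkeeping of passing from the norm identity (\ref{08201354}) to the bilinear statement in part (i) via polarization and justifying the interchange of the inner product with the Bochner integral; once that is established, parts (ii) and (iii) follow by elementary estimates.
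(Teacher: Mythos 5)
Your proposal is correct and follows essentially the same route as the paper: part (i) via polarization of the norm identity (\ref{08201354}) (using the self-adjointness of $\Phi(s)$) to get the bilinear identity and hence the Bochner-integral representation of $\iota_t^*$, part (ii) as an immediate consequence, and part (iii) via coercivity from (\hyperlink{C1}{C1}) together with Lax--Milgram (the paper simply cites Lax--Milgram where you additionally spell out the quantitative bound $|\iota_{-t}^*y|\leq c_1^2|y|$). No gaps.
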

	\noindent {\bf Proof}. Clearly, $(ii)$ follows from $(i)$. Noticing that $(x, y)_t=\frac{1}{4}(|x+y|_t^2-|x-y|_t^2)$, we obtain from
	(\hyperlink{C2}{C2}) that
	\begin{align}\label{01210231}
		(x,y)_t=(\iota_t^*x,y)=(x,y)+\int_{0}^{t}(\Phi(s)x,y)ds=\big(x+\int_{0}^{t}\Phi(s)xds,y\big),
	\end{align}
	for any $t\in[0,T]$ and $y\in H$. Thus $\iota_t^*x=x+\int_{0}^{t}\Phi(s)xds$ and $(i)$ is proved. For the proof of (iii), we use the Lax-Milgram theorem and (\hyperlink{C1}{C1}) to obtain that $\iota_t^*$ is invertible and $\|(\iota_{-t}^*)\|_{\mathcal{L}(H)}\leq c_1^2$.$\hfill\blacksquare$\par
	\begin{remark}\label{01210304}
		It follows from the formula (\ref{01210231}) that for any $x,y\in H$, the function $t\rightarrow(x,y)_t$ is continuous on $[0,T]$.
	\end{remark}
	
	In the sequel, we will write the linear space $H$ equipped with the inner product $(\cdot,\cdot)_t$ as $H^t$. We further assume that
	\begin{itemize}
		\item[(C3)]
		the operator $\iota_t^*$ is  a bounded operator on $V$ as well, and there exists a constant $c_2>0$ such that
		\begin{align*}
			\|\iota_t^* x\|_V\leq c_2\|x\|_V \text{ for any $x\in V,  t\in[0,T]$}. 
		\end{align*}
		\item[(C4)] $\iota_t^*$ is bijective from $V$ to $V$ and there exists a constant $c_3>0$ such that
		\begin{align*}
			\text{  $\|\iota_{-t}^*x\|_{V}\leq c_3\|x\|_{V}$ for any $x\in V$, $t\in[0,T]$.} 
		\end{align*}
	\end{itemize}
	Under (\hyperlink{C3}{C3}), for any $t\in[0,T],\ f\in V^*$, we  define $\iota_t^*f$ to be the element in $V^*$ such that
	$\langle \iota_t^*f,v\rangle=\langle f,\iota_t^*v\rangle$ for any $v\in V$. This coincides with $\iota_t^*f\in H$ when $f\in H\subseteq\ V^*$. And
	we remark that (\hyperlink{C3}{C3}) and
	(\hyperlink{C4}{C4}) imply that
	for any $f\in V^*$ and $t\in[0,T]$,
	\begin{eqnarray}\label{eq 20240720 00}
		\frac{1}{c_3}\|f\|_{V^*}
		\leq
		\|\iota_{t}^*f\|_{V^*}
		\leq
		c_2 \|f\|_{V^*}.
	\end{eqnarray}

	We now derive the following $\mathrm{It\hat{o}}$ formula for a $V^*$-valued semimartingale from our assumptions. Let $(\Omega,\mathcal{F},\mathcal{F}_t,\mathbb{P})$ be a complete filtered probability space satisfying the usual conditions. Let $\{W_t, t\geq 0\}$ be a $U$-cylindrical Brownian motion on this probability space, where $U$ is a Hilbert space.
	
	\begin{theorem}\label{ito}
		Assuming (\hyperlink{C1}{C1})-(\hyperlink{C3}{C3}).  Let $\alpha\in(1,+\infty)$, $X_0\in L^2(\Omega,\mathcal{F}_0,\mathbb{P},H)$, $Y\in L^\frac{\alpha}{\alpha-1}([0,T]\times \Omega, dt\otimes \mathbb{P},V^*)$ and $Z\in L^2\big([0,T]\times \Omega, dt\otimes \mathbb{P},L_2(U,H)\big)$ both progressively measurable. If $X$ is a $V^*$-valued continuous process of the form,
		\begin{align}\label{01202030}
			X_t=X_0+\int_{0}^{t}Y_s ds+\int_{0}^{t}Z_sdW_s,\ \forall t\in[0,T]
		\end{align}
		and $X\in L^\alpha([0,T]\times\Omega, dt\otimes\mathbb{P},V)$. Then $X_\cdot$ has an $H$-continuous version and for any $t\in [0,T]$, the following equality holds $\mathbb{P}-a.s.$
		\begin{align*}
			\nonumber|X_t|_t^2=\ &|X_0|^2+\int_{0}^{t}\big(2\langle Y_s, \iota_s^* X_s\rangle+\|Z_s\|^2_{L_2(U,H^s)}\big)ds+\int_{0}^{t}\big( X_s,\Phi(s)X_s \big)ds\\&+2\int_{0}^{t}\big(X_s,Z_s dW_s\big)_s.
		\end{align*}
		\begin{remark}
			In the equation above, we use the notation $\int_{0}^{t}\big(X_s,Z_s dW_s\big)_s$ to represent the stochastic integral $\int_{0}^{t}\big(X_s,\iota_s^*Z_s dW_s\big)$, where $\iota^*_sZ_s\in L_2(U,H^s)$ is the composition of the operator $\iota^*_s$ and $Z_s$ for any $s\in[0,T]$. \par
			We give an alternative interpretation of this term, which motivates the adoption of this notation. Indeed, take an orthonormal basis $\{f_i\}_{i\geq 1}$ of the Hilbert space $U$ and a sequence of real-valued independent Brownian motion $\{\beta^i\}_{i\geq 1}$ such that the $U$-cylindrical Brownian motion $W$ can be expressed as
			\begin{align}\label{01201852}
				dW_t=\sum_{i=1}^{\infty}f_id\beta^i_t.
			\end{align}
			Then, we have $$\int_{0}^{t}\big(X_s,\iota_s^*Z_s dW_s\big)=\sum_{i=1}^{\infty}\int_{0}^{t}\big(X_s,\iota_s^*Z_s  {f_i}\big)d{\beta^i_s}=\sum_{i=1}^{\infty}\int_{0}^{t}\big(X_s,Z_s f_i\big)_sd{\beta^i_s}.$$
			The expression on the right-hand-side motivates the use of the notation $\int_{0}^{t}\big(X_s,Z_s dW_s\big)_s$.
		\end{remark}
	\end{theorem}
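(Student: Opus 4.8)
The plan is to peel off the time-dependence of the norm. By (\hyperlink{C2}{C2}) and Lemma \ref{01201913}(i), set $R_t:=\iota_t^*-I=\int_0^t\Phi(s)\,ds$, a self-adjoint, $\mathcal{L}(H)$-valued, continuous process of bounded variation with $\sup_{t}\|R_t\|_{\mathcal{L}(H)}\leq\int_0^T\|\Phi(s)\|_{\mathcal{L}(H)}\,ds<\infty$. Then pathwise $|X_t|_t^2=(X_t,\iota_t^*X_t)=|X_t|^2+(X_t,R_tX_t)$. The first summand is governed by the classical It\^o formula for the Gelfand triple $V\hookrightarrow H\hookrightarrow V^*$ (see e.g. \cite{LR}), which at once yields the $H$-continuous version of $X$ and the identity $|X_t|^2=|X_0|^2+\int_0^t\big(2\langle Y_s,X_s\rangle+\|Z_s\|_{L_2(U,H)}^2\big)ds+2\int_0^t(X_s,Z_s\,dW_s)$. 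Everything therefore reduces to an It\^o/product formula for the scalar process $\phi_t:=(X_t,R_tX_t)$, with $\phi_0=0$ since $R_0=0$.

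The heuristic product rule, treating $R_t$ as a bounded-variation operator coefficient with $dR_t=\Phi(t)\,dt$ and $X$ as a semimartingale, gives $d\phi_t=2(R_tX_t,dX_t)+(X_t,\Phi(t)X_t)\,dt+(dX_t,R_t\,dX_t)$, that is, $\phi_t=\int_0^t\big(2\langle Y_s,R_sX_s\rangle+\mathrm{tr}(R_sZ_sZ_s^*)\big)ds+\int_0^t(X_s,\Phi(s)X_s)\,ds+2\int_0^t(R_sX_s,Z_s\,dW_s)$, where $R_sX_s\in V$ by (\hyperlink{C3}{C3}) so the duality pairing is meaningful. Adding this to the classical formula for $|X_t|^2$ and using $I+R_s=\iota_s^*$, the self-adjointness of $\iota_s^*$ on $H$, the identity $\|Z_s\|_{L_2(U,H)}^2+\mathrm{tr}(R_sZ_sZ_s^*)=\|Z_s\|_{L_2(U,H^s)}^2$, and $(\iota_s^*X_s,Z_s\,dW_s)=(X_s,Z_s\,dW_s)_s$, the four groups of terms collapse exactly into the asserted identity.

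The real content is to make the product rule for $\phi$ rigorous. Of its terms, the stochastic integral, the trace term, and the $\Phi$-term can be obtained by a time-partition/telescoping argument in which one uses only the $\mathcal{L}(H)$-operator-norm continuity of $s\mapsto R_s$ supplied by (\hyperlink{C2}{C2}), together with the It\^o isometry (or the BDG inequality) and dominated convergence against the integrable majorants $\|Z_s\|_{L_2(U,H)}^2$ and $\sup_{[0,t]}|X_s|^2$. The genuine obstacle is the drift duality term $\int_0^t2\langle Y_s,R_sX_s\rangle\,ds$: approximating $R_s$ by operators $R_{s_k}$ frozen at partition points forces one to control $\langle Y_s,(R_{s_k}-R_s)X_s\rangle$ on the $V^*$--$V$ pairing, which would require continuity of $\tau\mapsto\iota_\tau^*$ as operators on $V$ (or $V^*$) — something (\hyperlink{C1}{C1})--(\hyperlink{C3}{C3}) do not provide, since (\hyperlink{C3}{C3}) gives only uniform $V$-boundedness.

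To bypass this I would regularize $X$ in time rather than discretize the operator: with $X_t^\varepsilon=\int_0^tk_\varepsilon(t-s)X_s\,ds$ for a smooth kernel, $X^\varepsilon$ is absolutely continuous in $t$ with $H$-valued derivative $\dot X^\varepsilon$ and $X^\varepsilon\to X$ in $L^\alpha([0,T]\times\Omega,dt\otimes\mathbb{P},V)$. For the regularized path the map $t\mapsto(X_t^\varepsilon,\iota_t^*X_t^\varepsilon)$ is genuinely $C^1$, so the elementary product/chain rule applies and every factor involving $\iota_t^*$ or $\Phi(t)$ enters only through honest $H$-inner products, where $\mathcal{L}(H)$-regularity is precisely what is available. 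Letting $\varepsilon\to0$ recovers the $V^*$--$V$ drift pairing $\int_0^t2\langle Y_s,\iota_s^*X_s\rangle\,ds$ from $X\in L^\alpha(V)$ and $Y\in L^{\alpha/(\alpha-1)}(V^*)$, while the quadratic-variation correction $\int_0^t\|Z_s\|_{L_2(U,H^s)}^2\,ds$ and the martingale term $2\int_0^t(X_s,Z_s\,dW_s)_s$ emerge from the regularized drift exactly as in the classical proof of the Gelfand-triple It\^o formula, now carrying $\iota_s^*$ throughout. Reproducing this It\^o correction in the limit is the delicate step, and it is where the $L^\alpha$ and $L^2$ integrability hypotheses, the bound (\ref{eq 20240720 00}), and a standard localization are used.
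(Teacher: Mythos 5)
Your decomposition $|X_t|_t^2=|X_t|^2+(X_t,R_tX_t)$ with $R_t=\int_0^t\Phi(s)\,ds$, followed by the classical variational It\^o formula for $|X_t|^2$ and a mollification argument for the correction $(X_t,R_tX_t)$, is a genuinely different route from the paper's, and your formal bookkeeping is correct: the identity $\|Z_s\|^2_{L_2(U,H)}+\mathrm{tr}(R_sZ_sZ_s^*)=\|Z_s\|^2_{L_2(U,H^s)}$, the collapse of the drift and martingale terms via $I+R_s=\iota_s^*$, and the final assembly all check out. The paper sidesteps both the mollification and the continuity-in-$\mathcal{L}(V)$ obstacle you rightly identify by a different device: for each \emph{fixed} $e\in V$ the path $t\mapsto\iota_t^*e=e+\int_0^t\Phi(s)e\,ds$ is absolutely continuous with $H$-valued derivative $\Phi(\cdot)e$ and is bounded in $V$ by (C3), so Pardoux's It\^o formula for $(X_t,v_t)$ with a time-dependent test function applies directly and yields the $V^*$-valued semimartingale identity $\iota_t^*X_t=X_0+\int_0^t\iota_s^*Y_s\,ds+\int_0^t\iota_s^*Z_s\,dW_s+\int_0^t\Phi(s)X_s\,ds$; the theorem then follows from the standard product rule for the dual pairing of the two semimartingales $X$ and $\iota^*_\cdot X$. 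What your route buys is independence from that product rule, but at the price of deferring its hardest step --- recovering the quadratic-variation correction $\int_0^t\mathrm{tr}(R_sZ_sZ_s^*)\,ds$ and the martingale term from the mollified drift as $\varepsilon\to 0$ --- to the assertion that it works ``exactly as in the classical proof''. That claim is plausible (the classical mollification proof does extend to quadratic forms $(x,Rx)$ with $R$ self-adjoint and bounded on both $H$ and $V$), but it is precisely the portion you would have to write out in full, whereas the paper's reduction turns the whole proof into two applications of known results. If you keep your route, write out that limit; otherwise, adopt the paper's observation about testing against $\iota_t^*e$ for fixed $e$, which dissolves the difficulty you flagged.
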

	\noindent {\bf Proof of Theorem \ref{ito}}. The $H$-continuous version of $X$ follows from Theorem 4.2.5 in \cite{LR}. By (i) of Lemma \ref{01201913}, for any $e\in V$ and $t\in[0,T]$, $$\iota_t^*e=e+\int_{0}^{t}\Phi(s)eds.$$ By (\hyperlink{C3}{C3}), we observe that $\iota_\cdot^*e$ belongs to $L^\alpha([0,T],V)$, with its derivative $\Phi(\cdot)e\in L^1([0,T],H)$. Thus, we can apply the $\mathrm{It\hat{o}}$ formula (see e.g Theorem 3.1 in Section 2 of \cite{P}) to obtain
	\begin{align*}
		\big(X_t,\iota_t^* e\big)=\big(X_0, e\big)+\int_{0}^{t}\langle Y_s, \iota_s^*e \rangle ds+\int_{0}^{t}(Z_sdW_s,\iota_s^*e)+\int_{0}^{t}(\Phi(s)e,X_s)ds.
	\end{align*}
	Therefore, the following equation holds $\mathbb{P}-a.s$ for any $e\in V$ and $t\in[0,T]$,
	\begin{align*}
		\langle \iota_t^*X_t,e\rangle=\langle X_0,e\rangle+\int_{0}^{t}\langle\iota_s^*Y_s,e \rangle ds+\int_{0}^{t}(\iota_s^*Z_sdW_s,e)+\int_{0}^{t}(\Phi(s)X_s,e)ds.
	\end{align*}
	Thus
	\begin{align}\label{01202028}
		\iota_t^*X_t=X_0+\int_{0}^{t}\iota_s^*Y_s ds+\int_{0}^{t}\iota_s^*Z_s dW_s+\int_{0}^{t}\Phi(s)X_sds.
	\end{align}
	Applying the product rules to (\ref{01202030}) and (\ref{01202028}), we obtain
	\begin{align*}
		&(X_t,\iota_t^*X_t)\\=\ & (X_0,X_0)+\int_{0}^{t}\langle\iota_s^*Y_s,X_s\rangle ds+\int_{0}^{t}(\iota_s^*Z_sdW_s,X_s)+\int_{0}^{t}(\Phi(s)X_s,X_s)ds\\ &+\ \int_{0}^{t}\langle Y_s,\iota_s^*X_s \rangle ds+\int_{0}^{t}\big( Z_sdW_s,\iota_s^*X_s \big)+\int_{0}^{t}\langle \iota_s^*Z_s, Z_s\rangle_{L_2(U,H)} ds\\=\ &(X_0,X_0)+2\int_{0}^{t}\langle Y_s,\iota_s^*X_s\rangle ds+\ 2\int_{0}^{t}\big( Z_sdW_s,X_s \big)_s+\int_{0}^{t}\langle \iota_s^*Z_s, Z_s\rangle_{L_2(U,H)} ds\\ &+\int_{0}^{t}(\Phi(s)X_s,X_s)ds.
	\end{align*}
	The definition of $\iota_t^*$ yields that $$(X_t,\iota_t^*X_t)=|X_t|_t^2,$$ and
	\begin{align*}
		\langle \iota_s^*Z_s, Z_s\rangle_{L_2(U,H)}=\sum_{i=1}^{\infty} (Z_sf_i,\iota^*_sZ_sf_i)=\sum_{i=1}^{\infty}(Z_sf_i,Z_sf_i)_{s}=\|Z_s\|_{L_2(U,H^s)}^2,
	\end{align*}
	completing the proof of  Theorem \ref{ito}.$\hfill\blacksquare$\par
	Let's now return to the motivative example given in Subsection 2.1. In that case, $V=H_{0}^1(\mathcal{O}_0)$ and $H=L^2(\mathcal{O}_0)$, the product $(\cdot,\cdot)_t$ on $H$ is given by
	\begin{align*}
		(u,v)_t=\int_{\mathcal{O}_0}u(y)v(y)|det(\frac{\partial r_i}{\partial y_j})|(t,y)dy.
	\end{align*}
	Thus, we have $\iota_t^*u(y)=|\det(\frac{\partial r_i}{\partial y_j})|(t,y)u(y)$ for $u\in H$ and $y\in\mathcal{O}_0$. It then follows from (\ref{04262111}) that for any $v\in V$,
	\begin{align}\label{08201711}
		\langle \iota_t^*A_1(t,v),v\rangle\leq 0.
	\end{align}
	We would like to mention that our assumption on nonhomogeneous monotonicity, given in the next subsection, is a generalized version of (\ref{08201711}).
	\subsection{SPDEs with nonhomogeneous monotonicity}
	\ \ \ \ In this subsection, we will present the results on the well-posedness of  SPDEs with nonhomogeneous monotonicity.
	We focus on stochastic evolution equation (\ref{spde}) with a deterministic initial data $\xi\in H$ in the variational framework. To establish the well-posedness, we impose the following assumptions on the coefficients $A(\cdot,\cdot)$ and $B(\cdot,\cdot)$.
	Let $f\in L^1([0,T],\mathbb{R}_+)$ and $\alpha\in(1,\infty)$. Assume (\hyperlink{C1}{C1})-(\hyperlink{C4}{C4}) are satisfied for the operators  $\iota_t^*, t\geq 0$ defined in Subsection 2.2.
	\vskip -0.5cm
	\begin{itemize}
		\item [\hypertarget{H1}{{\bf (H1)}}] (Hemicontinuity) For $a.e.$
		$t\in[0,T]$, the map $\lambda\in\mathbb{R}\rightarrow\big\langle A(t,u+\lambda v),\iota_t^*x\big\rangle \in\mathbb{R}$ is continuous, for any $u,v,x\in V$.
		\item [\hypertarget{H2}{{\bf (H2)}}] (Nonhomogeneous Monotonicity) There exists a locally bounded measurable function $\rho:V\rightarrow\mathbb{R}$, nonnegative constants $\gamma$ and $C$ such that for $ a.e.$ $t\in[0,T]$, the following inequalities hold for any $u,v\in V$,
		\begin{align*}
			&2\big\langle \iota_t^*A(t,u)-\iota_t^*A(t,v), u-v\big\rangle+\|B(t,u)-B(t,v)\|^2_{L_2(U,H^t)}\\\leq
			&\big[f(t)+\rho(v)\big]|u-v|_t^2,\\&
			|\rho(u)|\leq C(1+\|u\|_V^\alpha)(1+|u|^\gamma).
		\end{align*}
		\item [\hypertarget{H3}{{\bf (H3)}}] (Coercivity) There exists a constant $c>0$ such that for $a.e.$ $t\in[0,T]$, the following inequality holds for any $u\in V$,
		\begin{align*}
			2\langle\iota^*_t A(t,u),u\rangle+\|B(t,u)\|^2_{L_2(U,H^t)}\leq f(t)\big(1+|u|_t^2\big)-c\|u\|^\alpha_V.
		\end{align*}
		\item [\hypertarget{H4}{{\bf (H4)}}] (Growth) There exist nonnegative constants $\beta$ and $C$ such that for $a.e.$ $t\in[0,T]$, we have for any $u\in V$,
		$$\|\iota_t^*A(t,u)\|^\frac{\alpha}{\alpha-1}_{V^*}\leq\big(f(t)+C\|u\|_V^\alpha\big)\big(1+|u|^\beta\big).$$
		\item [\hypertarget{H5}{{\bf (H5)}}] For $a.e.$ $t\in[0,T]$, we have for any $u\in V$,
		$$\|B(t,u)\|^2_{L_2(U,H^t)}\leq f(t)(1+|u|_t^2).$$
	\end{itemize}
	
	Let $\xi\in H$. We give a precise definition of the solution.
	\begin{definition}\label{01252114}
		An $H$-valued continuous $\{\mathcal{F}_t\}$-adapted process $(X_t)_{t\in[0,T]}$ is called a solution to (\ref{spde}) if $X\in L^\alpha\big(\Omega,L^\alpha([0,T],V)\big)\cap L^2\big(\Omega, C([0,T],H)\big)$ with $\alpha>1$ in (\hyperlink{H3}{H3}) and for any $t\in[0,T]$, the following holds $\mathbb{P}\text{-\ }a.s.$  in $V^*$,
		\begin{align*}
			X_t=\xi+\int_{0}^{t}A(s,X_s)ds+\int_{0}^{t}B(s,X_s)dW_s.
		\end{align*}
	\end{definition}

	Now we can state our main result on the well-posedness.
	\begin{theorem}\label{wp}
		Suppose (\hyperlink{H1}{H1})-(\hyperlink{H5}{H5}) and (\hyperlink{C1}{C1})-(\hyperlink{C4}{C4}) hold. Then for any $\xi\in H$, the equation (\ref{spde}) has a unique solution $(X_t)_{t\in[0,T]}$ such that $X_0=\xi$ and $$E\big[\sup_{t\in[0,T]}|X_t|_t^2+\int_{0}^{T}\|X_t\|_V^\alpha dt\big]<\infty.$$
	\end{theorem}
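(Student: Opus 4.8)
\noindent\emph{Proof strategy.} The plan is to obtain existence by a Galerkin-type approximation adapted to the moving geometry, and to derive uniqueness directly from the $\mathrm{It\hat{o}}$ formula of Theorem \ref{ito}. Since both the monotonicity and the coercivity are phrased through the twisted operator $\iota_t^*A(t,\cdot)$ and the time-dependent norms $|\cdot|_t$, a Galerkin scheme built on a fixed $(\cdot,\cdot)$-orthonormal basis is incompatible with the structure. Instead I would fix a basis $\{e_i\}_{i\ge1}\subset V$ orthonormal in $H$, set $H_n=\mathrm{span}\{e_1,\dots,e_n\}$, and let $P_n(t):H\to H_n$ be the projection that is orthogonal with respect to $(\cdot,\cdot)_t$; explicitly $P_n(t)h=\sum_{i,j}(G_n(t)^{-1})_{ij}(h,e_j)_t\,e_i$, where the Gram matrix $G_n(t)=[(e_i,e_j)_t]$ is invertible and continuous in $t$ by (C1) and Remark \ref{01210304}. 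One extends $P_n(t)$ to $V^*$ through $(f,e_j)_t:=\langle f,\iota_t^*e_j\rangle$. The approximating equation is the finite-dimensional $\mathrm{It\hat{o}}$ SDE in $H_n$,
\begin{align*}
dX^n_t=P_n(t)A(t,X^n_t)\,dt+P_n(t)B(t,X^n_t)\,dW_t,\qquad X^n_0=P_n(0)\xi,
\end{align*}
whose coefficients are continuous in $X^n$ by hemicontinuity (H1) and local monotonicity (H2), and whose solution does not explode thanks to the coercivity (H3).

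The next step is a uniform a priori bound. These projections are designed precisely so that $(P_n(t)g,X^n_t)_t=(g,X^n_t)_t$ whenever $X^n_t\in H_n$, whence $\langle P_n(t)A(t,X^n_t),\iota_t^*X^n_t\rangle=\langle\iota_t^*A(t,X^n_t),X^n_t\rangle$ and, since a $(\cdot,\cdot)_t$-orthogonal projection is a contraction in $H^t$, $\|P_n(t)B(t,X^n_t)\|_{L_2(U,H^t)}\le\|B(t,X^n_t)\|_{L_2(U,H^t)}$. Applying Theorem \ref{ito} to $|X^n_t|_t^2$, inserting coercivity (H3), controlling the extra term by $|(x,\Phi(s)x)|\le c_1^2\|\Phi(s)\|_{\mathcal L(H)}|x|_s^2$ with $\|\Phi\|_{\mathcal L(H)}\in L^1$ from (C2), and using the Burkholder--Davis--Gundy inequality and Gronwall's lemma, one obtains
\begin{align*}
\mathbb{E}\Big[\sup_{t\in[0,T]}|X^n_t|_t^2\Big]+\mathbb{E}\int_0^T\|X^n_t\|_V^\alpha\,dt\le C,
\end{align*}
uniformly in $n$, where (C1) lets me pass freely between $|\cdot|_t$ and $|\cdot|$. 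By reflexivity, along a subsequence $X^n\rightharpoonup X$ in $L^\alpha([0,T]\times\Omega,V)$, $A(\cdot,X^n)\rightharpoonup Y$ in $L^{\alpha/(\alpha-1)}([0,T]\times\Omega,V^*)$ (bounded there by growth (H4) and (C4)), $B(\cdot,X^n)\rightharpoonup Z$ in $L^2([0,T]\times\Omega,L_2(U,H))$, and $X^n_T\rightharpoonup\eta$ in $L^2(\Omega,H)$. Passing to the limit in the weak form identifies a $V^*$-valued process $X_t=\xi+\int_0^t Y_s\,ds+\int_0^t Z_s\,dW_s$ to which Theorem \ref{ito} again applies.

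The crux is the identification $Y=A(\cdot,X)$ and $Z=B(\cdot,X)$, i.e.\ the Minty--Browder monotonicity argument, and this is where I expect the main obstacle to lie. Because $P_n(t)$ is \emph{not} self-adjoint under $(\cdot,\cdot)$, every cross term must be organized through $\iota_t^*$ and Theorem \ref{ito} rather than the classical variational $\mathrm{It\hat{o}}$ formula, and because $\rho$ in (H2) is only locally bounded, I would run the argument with an integrating factor $e^{-\psi^\phi_s}$, $\psi^\phi_s=\int_0^s\big(f(r)+\rho(\phi_r)+c_1^2\|\Phi(r)\|_{\mathcal L(H)}\big)dr$. Applying Theorem \ref{ito} to the weighted norms of both $X^n$ and $X$, taking $\liminf_n$ (weak lower semicontinuity plus the a priori bound for the limit terms), and inserting the nonhomogeneous monotonicity tested at $u=X^n_s,\ v=\phi_s$ for bounded progressively measurable $V$-valued $\phi$, I would arrive at
\begin{align*}
\mathbb{E}\int_0^T e^{-\psi^\phi_s}\big[&2\langle \iota_s^*Y_s-\iota_s^*A(s,\phi_s),X_s-\phi_s\rangle+\|Z_s-B(s,\phi_s)\|^2_{L_2(U,H^s)}\\
&-(f(s)+\rho(\phi_s))|X_s-\phi_s|_s^2\big]\,ds\le0.
\end{align*}
After a standard localization extending this to $\phi=X$, the choice $\phi=X$ forces $Z=B(\cdot,X)$, and then $\phi=X-\lambda w$ with $\lambda\downarrow0$, using hemicontinuity (H1), yields $Y=A(\cdot,X)$; the delicate bookkeeping of the moving norm $|\cdot|_s$ inside the monotonicity is the technically demanding point.

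Finally, uniqueness follows cleanly from Theorem \ref{ito}. For two solutions $X,\tilde X$ it gives, $\mathbb{P}$-a.s.,
\begin{align*}
|X_t-\tilde X_t|_t^2=&\int_0^t\big[2\langle \iota_s^*A(s,X_s)-\iota_s^*A(s,\tilde X_s),X_s-\tilde X_s\rangle+\|B(s,X_s)-B(s,\tilde X_s)\|^2_{L_2(U,H^s)}\big]ds\\
&+\int_0^t(X_s-\tilde X_s,\Phi(s)(X_s-\tilde X_s))\,ds+M_t,
\end{align*}
with $M$ a local martingale. Bounding the first integrand by (H2) and the $\Phi$-term by (C1)--(C2), then applying the integrating factor $e^{-\psi^{\tilde X}_t}$ and localizing by $\tau_R=\inf\{t:\int_0^t(f+\rho(\tilde X)+c_1^2\|\Phi\|_{\mathcal L(H)})\,ds\ge R\}$ (finite a.s.\ since $\tilde X\in L^\alpha([0,T],V)\cap C([0,T],H)$ and $\rho$ satisfies its growth bound) to make $M$ a genuine martingale, one gets $\mathbb{E}\,e^{-\psi^{\tilde X}_{t\wedge\tau_R}}|X_{t\wedge\tau_R}-\tilde X_{t\wedge\tau_R}|^2\le0$, whence $X=\tilde X$ after $R\to\infty$. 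I expect the existence half to be the real difficulty, precisely because the whole Galerkin scheme and Minty argument must be carried out inside the twisted pairing $\langle\iota_t^*\cdot,\cdot\rangle$ and the moving norms $|\cdot|_t$, for which Theorem \ref{ito} is the indispensable tool.
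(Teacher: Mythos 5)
Your proposal follows essentially the same route as the paper: a Galerkin scheme built on time-dependent $(\cdot,\cdot)_t$-orthogonal projections $P_n(t)$ (your Gram-matrix formula is equivalent to the paper's Gram--Schmidt construction), uniform a priori bounds via Theorem \ref{ito} together with (H3) and the $L^1$ control of $\Phi$, weak compactness, a Minty--Browder identification using the integrating factor $e^{-\Psi}$ with $\Psi'=f+\rho(\phi)+c_1^2\|\Phi\|_{\mathcal L(H)}$, and uniqueness by the same weighted It\^o argument with localization. The approach and all key ingredients match the paper's proof.
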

	\begin{remark}	
		We mention that the requirement on the initial $\xi$ can be relaxed to $\xi\in L^p(\Omega,\mathcal{F}_0,H)$ for some $p\geq 2$ depend on $\beta$ as outlined in Theorem 1.1 in \cite{LR}. Similarly, the assumption (\hyperlink{H5}{H5}) on the noise term can also be loosened to be solely governed by the mixture of the norms in $V$ and $H$, as demonstrated in (1.2) of \cite{BLZ}. Moreover, the coefficient $A(t,\cdot)$ and $B(t,\cdot)$ can be assumed to be random, as discussed in \cite{LR2}. However, for the sake of simplicity, we opt for the assumptions presented here, which are sufficient for the examples we are interested in.
	\end{remark}
	\subsection{Proof of Theorem \ref{wp}}
	\ \ \ \ \ In this subsection, we will give a proof of Theorem \ref{wp}. The proof is based on a time-dependent Galerkin approximation and the modified techniques of monotonicity.\par
	We first show the uniqueness, based on the $\mathrm{It\hat{o}}$ formula stated in Theorem \ref{ito}.\par
	Assume $X$ and $Y$ are two solutions with the same initial data $\xi\in H$. For $M>0$, define
	\begin{align}\label{01260035}
		\nonumber\sigma^M\dot{=}\ T&\wedge\inf\Big\{t\geq 0:\ |X_t|_t\geq M\Big\}\\\nonumber&\wedge\inf\Big\{t\geq 0:\int_{0}^{t} \|X_s\|_V^\alpha ds\geq M\Big\}\\\nonumber&\wedge\inf\Big\{t\geq 0:|Y_t|_t\geq M\Big\}\\&\wedge\inf\Big\{t\geq 0:\int_{0}^{t} \|Y_s\|_V^\alpha ds\geq M\Big\}.
	\end{align}
	By Theorem \ref{ito}, we have
	\begin{align*}
		&\ |X_{t\wedge\sigma^M}-Y_{t\wedge\sigma^M}|^2_{{t\wedge\sigma^M}}\\=&\ \int_{0}^{t\wedge\sigma^M}\Big(2\langle A(s,X_s)-A(s,Y_s),\iota_s^*(X_s-Y_s)\rangle +\|B(s,X_s)-B(s,Y_s)\|^2_{L_2(U,H^s)}\Big)ds\\&+\int_{0}^{t\wedge\sigma^M}\Big(\big(B(s,X_s)-B(s,Y_s)\big)dW_s,X_s-Y_s\Big)_s+\int_{0}^{t\wedge\sigma^M}\big(\Phi(s)(X_s-Y_s),X_s-Y_s\big)ds.
	\end{align*}
	Define
	$$\Psi(t)=\int_{0}^{t}\big(f(s)+c_1^2\|\Phi(s)\|_{\mathcal{L}(H)}+\rho(Y_s)\big)ds.$$
	Apply the $\mathrm{It\hat{o}}$ formula to $e^{-\Psi(t\wedge\sigma^M)}|X_{t\wedge\sigma^M}-Y_{t\wedge\sigma^M}|^2_{t\wedge\sigma^M}$, we obtain
	\begin{align*}
		&e^{-\Psi(t\wedge\sigma^M)}|X_{t\wedge\sigma^M}-Y_{t\wedge\sigma^M}|^2_{t\wedge\sigma^M}\\=\ &\ \int_{0}^{t\wedge\sigma^M}e^{-\Psi(s)}\Big(2\langle A(s,X_s)-A(s,Y_s),\iota_s^*(X_s-Y_s)\rangle +\|B(s,X_s)-B(s,Y_s)\|^2_{L_2(U,H^s)}\Big)ds\\&+\int_{0}^{t\wedge\sigma^M}e^{-\Psi(s)}\Big(\big(B(s,X_s)-B(s,Y_s)\big)dW_s,X_s-Y_s\Big)_s\\&+\int_{0}^{t\wedge\sigma^M}e^{-\Psi(s)}\big(\Phi(s)(X_s-Y_s),X_s-Y_s\big)ds-\int_{0}^{t\wedge\sigma^M}\Psi'(s)e^{-\Psi(s)}|X_s-Y_s|_s^2ds\\\leq&\int_{0}^{t\wedge\sigma^M}e^{-\Psi(s)}\Big(f(s)+\rho(Y_s)+c_1^2\|\Phi(s)\|_{\mathcal{L}(H)}-\Psi'(s)\Big)|X_s-Y_s|_s^2ds\\&+\int_{0}^{t\wedge\sigma^M}e^{-\Psi(s)}\Big(\big(B(s,X_s)-B(s,Y_s)\big)dW_s,X_s-Y_s\Big)_s.
	\end{align*}
	Taking expectation at both sides, we can easily obtain
	\begin{align*}
		E\Big[e^{-\Psi(t\wedge\sigma^M)}|X_{t\wedge\sigma^M}-Y_{t\wedge\sigma^M}|^2_{t\wedge\sigma^M}\Big]=0,
	\end{align*}
	which implies $X_{\cdot}$= $Y_{\cdot}$ on $[0,\sigma^M]$. Since $\sigma^M\rightarrow T$ as $M\rightarrow\infty$, we can assert that $X_{\cdot}=Y_{\cdot}$ on $[0,T]$. \par
	We are now left to show the existence of the solution.
	Due to the dense embedding $V\subseteq H$, we can choose a sequence $\{e_i\}_{i=1}^\infty\subseteq V$ such that $\{e_i\}_{i\geq 1}$ is an orthonormal basis of $\big(H,(\cdot,\cdot)\big)$. Denote $H_n= span\{e_1,\cdots,e_n\}$. For every $t\in[0,T]$, we can apply the Gram-Schmidt orthogonalization to $\{e_i\}_{i\geq 1}$ with regard to the inner product $(\cdot,\cdot)_t$, obtaining an orthonormal basis  $\{e_i(t)\}_{i\geq 1}$  in $H^t$.
	Then, we can define the time-dependent finite-dimensional projection $P_n(t):V^*\rightarrow H_n$ given by
	\begin{align}\label{proj}
		P_n(t)u=\sum_{i=1}^{n}\langle u,\iota^*_te_i(t)\rangle e_i(t)=\sum_{i=1}^{n}\langle \iota^*_tu,e_i(t)\rangle e_i(t),
	\end{align}
	for any $u\in V^*$. In particular, if $u\in H$, by the definition of $\iota_t^*$, we can easily obtain that $$P_n(t)u=\sum_{i=1}^{n}\big(u,e_i(t)\big)_te_i(t),$$
	which implies that $P_n(t):H\rightarrow H_n$, is the orthogonal projection onto $H_n$ under the inner product $(\cdot,\cdot)_t$.\par   We would like to mention that  Remark \ref{01210304} and the linear independence of $\{e_i\}_{i\geq 1}$ implies that
	for any $k\geq 1$, the map: $t\in [0, T]\rightarrow e_k(t)\in H$ is continuous. Thus for any $n\geq 1$, there exists a constant $C_n>0$ such that for any $t\in[0,T]$,
	\begin{align}\label{01220115}
		\|P_n(t)\|_{\mathcal{L}(V^*,H_n)}\leq C_n.
	\end{align}
	Moreover, for $f\in V^*,w\in H_n,t\in[0,T]$,
	\begin{align}\label{01212124}
		\nonumber( P_n(t)f,w)_t&=\sum_{i=1}^{n}\langle f,\iota_t^* e_i(t)\rangle(e_i(t),w)_t\\\nonumber&=\langle f,\iota_t^* [\sum_{i=1}^{n}(w,e_i(t))_te_i(t)]\rangle\\&=\langle f,\iota_t^*w\rangle.
	\end{align}
	Set $$W^n(t)=Q_nW_t\dot{=}\sum_{i=1}^{n}\langle W_t,f_i\rangle f_i,$$
	where $Q_n$ is the orthogonal projection from $U$ onto span\{$f_1,\cdots,f_n$\} for $\{f_n\}_{n\geq 1}$ introduced in (\ref{01201852}).
	Then, we consider the following finite-dimensional approximating equation,
	\begin{numcases}{}
		\nonumber dX^n_t=P_n(t)A(t,X^n_t)dt+P_n(t)B(t,X^n_t)dW^n_t,\\
		X_0^n=P_n(0)\xi\ \dot{=}\ \xi^n\in H_n\label{galerkin}.
	\end{numcases}
	For the well-posedness of (\ref{galerkin}), we reformulate it as an equation on $\mathbb{R}^n$. Consider the maps
	$a_\cdot^n(t,x):[0,T]\times \mathbb{R}^n\rightarrow \mathbb{R}^n$ and $b_{\cdot\cdot}^n(t,x):[0,T]\times \mathbb{R}^n\rightarrow \mathbb{R}^{n\times n}$ given by
	\begin{align}
		a^n_i(t,x)=\big( P_n(t)A(t,\sum_{k=1}^{n}x^ke_k), e_i\big),\  b^n_{ij}(t,x)=\big(P_n(t)B(t,\sum_{k=1}^{n}x^ke_k) f_j,e_i\big).\label{01220053}
	\end{align}
	Write $\big\{(X^n_t,e_k)\big\}_{k=1}^n$ as $x_t\in\mathbb{R}^n$ for any $t\in(0,T$], $\big\{(\xi,e_k)\big\}_{k=1}^n$ as $\xi^n\in\mathbb{R}^n$, and $B_t^n=\{\langle W_t,f_i\rangle\}_{i=1}^n$. Then the Galerkin approximating equation (\ref{galerkin}) is equivalent to the following equation on $\mathbb{R}^n$,
	\begin{numcases}{}
		\nonumber dx_t=a^n(t,x_t)dt+b^n(t,x_t)dB^n_t,\ t\in(0,T],\\
		x_0= \xi^n\ \  \ \label{ode}.
	\end{numcases}
	Denote $a^{ij}_t\dot{=}\ \big( e_i, e_j\big)_t$, for any $x=(x^1,...,x^n),y=(y^1,...,y^n)\in\mathbb{R}^n$, we have for  $t\in[0,T]$,
	\begin{align}\label{01212353}
		\nonumber\sum_{i,j=1}^{n}a_t^{ij}a_i^n(t,x)y^j&=\sum_{i,j=1}^{n}\big(P_n(t)A(t,\sum_{k=1}^{n}x^ke_k),e_i\big)(e_j,e_i)_ty^j\\&\nonumber=\Big(\sum_{i=1}^{n}\big(P_n(t)A(t,\sum_{k=1}^{n}x^ke_k), e_i\big)e_i,\sum_{j=1}^{n}y^je_j\Big)_t\\\nonumber&=\big(P_n(t)A(t,\sum_{k=1}^{n}x^ke_k),\sum_{j=1}^{n}y^je_j\big)_t\\&=\langle A(t,\sum_{k=1}^{n}x^ke_k),\iota_t^*\big(\sum_{j=1}^{n}y^je_j\big)\rangle,
	\end{align}
	where the last equality follows from (\ref{01212124}). Likewise, we also have that for any $t\in[0,T],x,y\in \mathbb{R}^n$,
	\begin{align}\label{01212354}
		\sum_{i,j,k=1}^{n}a^{ij}_tb^n_{ik}(t,x)b^n_{jk}(t,y)=\langle P_n(t)B(t,\sum_{l=1}^{n}x^le_l)Q_n, P_n(t)B(t,\sum_{l=1}^{n}y^le_l)Q_n \rangle_{L_2(U,H^t)}.
	\end{align}
	Then, for the well-posedness of (\ref{ode}), the following lemma is needed.
	\begin{lemma}
		Assume that there exists a family of inner products $( \cdot,\cdot )_t$ on $\mathbb{R}^n$, such that (\hyperlink{C1}{C1}) and (\hyperlink{C2}{C2}) hold with $H{=}\ \mathbb{R}^d$. For the canonical basis $\{e_k\}_{k=1}^d$ of $\mathbb{R}^d$ and $\sigma\in\mathbb{R}^{d\times d}$, define the following norm $\|\cdot\|_t$ on $\mathbb{R}^{d\times d}$ by
		$$\|\sigma\|^2_t=\sum_{k=1}^{d}(\sigma e_k,\sigma e_k)_t	, $$
		which is equivalent to the Frobenius norm.
		Assume that $\sigma(t,x):[0,T]\times\mathbb{R}^d\rightarrow\mathbb{R}^{d\times d},\ b(t,x):[0,T]\times\mathbb{R}^d\rightarrow \mathbb{R}^d$ are measurable and continuous in $x$ for $a.e.$ $t\in[0,T]$ fixed and satisfy for any $R>0$,
		\begin{align}
			\int_{0}^{T}\sup_{|x|\leq R}\Big\{ \|\sigma(t,x)\|_t^2+|b(t,x)|\Big\}dt<\infty. \label{01220047}
		\end{align}
		Moreover, we assume that for any $t\in[0,T],R\in [0,\infty)$, $x,y\in\mathbb{R}^d,\ |x|\vee|y|\leq R$,
		\begin{itemize}{}
			\item[(i)] $2\big( x-y, b(t,x)-b(t,y)\big)_t+\|\sigma(t,x)-\sigma(t,y)\|_t^2\leq K_t(R)|x-y|_t^2,$
			\item[(ii)] $2\big( x,b(t,x) \big)_t+\|\sigma(t,x)\|_t^2\leq K_t(1)(1+|x|_t^2)$,
		\end{itemize}
		where $\forall\ R>0$, $K_t(R)$ is an $\mathbb{R}_+$-valued measurable function satisfying $\int_{0}^{T}K_t(R)dt<\infty$. Then, $\forall\ {\mathcal{F}_0}$-measurable initial $\xi:\Omega\rightarrow\mathbb{R}^d$, there exists an unique solution to the SDE driven by the $d$-dimensional standard Brownian motion $B_.$,
		$$ dX_t=b(t,X_t)dt+\sigma(t,X_t)dB_t,\ X_0=\xi.$$
		\noindent {\bf Proof}. The proof is almost identical to the proof of Theorem 3.1 in \cite{LR2}, using the $\mathrm{It\hat{o}}$ formula we obtained in Theorem \ref{ito}. Thus we omit it here.$\hfill\blacksquare$
	\end{lemma}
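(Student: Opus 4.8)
The plan is to mirror the proof of Theorem~3.1 in \cite{LR2}, the only modification being that every energy estimate is carried out in the time-dependent norm $|\cdot|_t$ rather than the fixed Euclidean one, so that the $\mathrm{It\hat{o}}$ formula of Theorem~\ref{ito} (applied with $V=H=\mathbb{R}^d$, $\iota_t^*$ the symmetric positive-definite matrix representing $(\cdot,\cdot)_t$, and the correction operator $\Phi$) replaces the classical $\mathrm{It\hat{o}}$ formula at each step. In this finite-dimensional setting the dual pairing $\langle\cdot,\cdot\rangle$ reduces to the Euclidean inner product, and $\langle b, \iota_t^*x\rangle=(b,x)_t$, $\|\sigma\|_{L_2(U,H^t)}^2=\|\sigma\|_t^2$, so the abstract identity of Theorem~\ref{ito} becomes precisely the bilinear forms appearing in conditions (i) and (ii).

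For uniqueness, suppose $X$ and $Y$ both solve the SDE with the same initial datum $\xi$. First I would apply Theorem~\ref{ito} to $|X_t-Y_t|_t^2$, obtaining
\begin{align*}
|X_t-Y_t|_t^2=&\int_0^t\Big(2\big(b(s,X_s)-b(s,Y_s),X_s-Y_s\big)_s+\|\sigma(s,X_s)-\sigma(s,Y_s)\|_s^2\Big)ds\\
&+\int_0^t\big(\Phi(s)(X_s-Y_s),X_s-Y_s\big)ds+M_t,
\end{align*}
where $M_t$ is a local martingale. Introducing the localizing stopping time $\tau_R=\inf\{t:|X_t|\vee|Y_t|\geq R\}$, condition (i) bounds the first integrand by $K_s(R)|X_s-Y_s|_s^2$, while (\hyperlink{C1}{C1}) bounds the $\Phi$-term by $c_1^2\|\Phi(s)\|_{\mathcal{L}(H)}|X_s-Y_s|_s^2$. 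Multiplying by the exponential weight $\exp\big(-\int_0^{t}(K_s(R)+c_1^2\|\Phi(s)\|_{\mathcal{L}(H)})ds\big)$ and taking expectations---exactly as in the uniqueness argument for Theorem~\ref{wp}---forces $X=Y$ on $[0,\tau_R]$; since the continuous paths are bounded on $[0,T]$, $\tau_R\to T$ a.s. and pathwise uniqueness follows.

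For existence I would proceed by truncation, following \cite{LR2}. I would fix $R>0$ and set $b_R(t,x)=b(t,\pi_R(x))$, $\sigma_R(t,x)=\sigma(t,\pi_R(x))$, where $\pi_R$ denotes the radial projection onto the closed ball of radius $R$; these coefficients coincide with $b,\sigma$ on $\{|x|\leq R\}$ and, thanks to (\hyperlink{C1}{C1}) and condition (i), are globally monotone and bounded. For such coefficients existence and uniqueness of a solution $X^R$ follows from a standard Euler--Maruyama (or Picard) approximation whose convergence is verified by applying Theorem~\ref{ito} to the differences of successive approximations, exactly as in \cite{LR2}. Setting $\tau_R=\inf\{t:|X^R_t|\geq R\}$, the process $X^R$ solves the original SDE on $[0,\tau_R)$, so applying Theorem~\ref{ito} to $|X^R_{t\wedge\tau_R}|_t^2$ and invoking the coercivity (ii) (valid there, since the argument stays in the ball) together with (\hyperlink{C1}{C1}) produces an a priori bound $E\big[\sup_{t\in[0,T]}|X^R_{t\wedge\tau_R}|^2\big]\leq C$ uniform in $R$; hence $\mathbb{P}(\tau_R<T)\to0$. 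Uniqueness for the truncated equations gives $X^R=X^{R'}$ on $[0,\tau_R]$ for $R'>R$, so the family $(X^R)_R$ is consistent and patches into a process $X$ solving the original SDE on all of $[0,T]$.

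The hard part is conceptual rather than computational: everything hinges on choosing the Lyapunov functional $|X_t|_t^2$ adapted to the moving metric, which forces the use of the modified $\mathrm{It\hat{o}}$ formula of Theorem~\ref{ito} and introduces the extra drift $\int_0^t(\Phi(s)X_s,X_s)\,ds$. Once that formula is available, the truncation-and-localization skeleton is entirely standard, and one only has to check that the radial cutoff preserves global monotonicity in the $(\cdot,\cdot)_t$-sense and that the correction term is absorbed via (\hyperlink{C1}{C1}). This is precisely the sense in which the proof is ``almost identical'' to \cite{LR2}, and it is why the routine details are omitted.
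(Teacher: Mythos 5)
The uniqueness half of your argument is sound and matches what the paper does elsewhere (it is the same exponential-weight computation used to prove uniqueness in Theorem \ref{wp}, transplanted to $\mathbb{R}^d$ via Theorem \ref{ito}). The paper itself gives no details for this lemma beyond pointing to Theorem 3.1 of \cite{LR2}, whose proof constructs the solution by an Euler (polygonal) approximation applied to the \emph{original} coefficients and shows that the Euler approximants are Cauchy in probability by exploiting the local monotonicity together with stopping times. Your existence construction departs from that route, and the departure is where the gap lies.

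Concretely, two steps fail. First, composing $b$ and $\sigma$ with the radial projection $\pi_R$ does \emph{not} preserve the one-sided monotonicity (i): for $|x|,|y|>R$ the relevant quantity is $\big(x-y,\,b(t,\pi_R(x))-b(t,\pi_R(y))\big)_t$, and there is no way to control this by $\big(\pi_R(x)-\pi_R(y),\,b(t,\pi_R(x))-b(t,\pi_R(y))\big)_t$, since $\pi_R(x)-\pi_R(y)$ need not be positively proportional to $x-y$ and $b(t,\pi_R(x))-b(t,\pi_R(y))$ can point in an arbitrary direction (monotonicity is a signed, direction-sensitive condition, unlike a Lipschitz bound). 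So the claim that $b_R,\sigma_R$ are "globally monotone" is unjustified. Second, even granting bounded globally monotone coefficients, the coefficients here are only \emph{continuous} in $x$, so Picard iteration does not converge, and the convergence of the Euler--Maruyama scheme under continuity plus one-sided monotonicity is not "standard" --- it is precisely the content of the Krylov-type theorem you are being asked to adapt, so invoking it at this point is circular. To close the gap you should run the Euler scheme on the original $b,\sigma$, obtain the a priori bound from (ii) and Theorem \ref{ito} (your coercivity step is fine), and then show the approximants are Cauchy by applying Theorem \ref{ito} to $|X^n_t-X^m_t|_t^2$ on the event where both stay in a ball of radius $R$, using (i) with the exponential weight $\exp\big(-\int_0^t(K_s(R)+c_1^2\|\Phi(s)\|_{\mathcal{L}(H)})ds\big)$; no truncation of the coefficients is needed.
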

	The well-posedness of (\ref{galerkin}) follows from the following corollary.
	\begin{corollary}
		Under the assumptions (\hyperlink{H1}{H1})-(\hyperlink{H5}{H5}) and (\hyperlink{C1}{C1})-(\hyperlink{C2}{C2}). The equation (\ref{ode}) has a unique solution.
	\end{corollary}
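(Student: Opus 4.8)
The plan is to deduce the corollary from the preceding lemma by verifying each of its hypotheses for the family of inner products $(x,y)_t=\sum_{i,j=1}^n a^{ij}_t x^iy^j$ on $\mathbb{R}^n$, with $a^{ij}_t=(e_i,e_j)_t$, and for the coefficients $b(t,x)=a^n(t,x)$ and $\sigma(t,x)=b^n(t,x)$ defined in (\ref{01220053}). Throughout I identify $x\in\mathbb{R}^n$ with $u:=\sum_{k}x^ke_k\in H_n$ (and $y$ with $v:=\sum_k y^ke_k$); under this identification the drift vector $a^n(t,x)$ represents $P_n(t)A(t,u)\in H_n$ and the diffusion matrix $b^n(t,x)$ represents $P_n(t)B(t,u)Q_n$. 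The guiding idea is that the scalar quantities in the lemma coincide, via the algebraic identities (\ref{01212124}), (\ref{01212353}) and (\ref{01212354}), with operator-valued expressions controlled by (H2)--(H5). The measurability and the continuity in $x$ of $a^n(t,\cdot)$ and $b^n(t,\cdot)$, required as standing regularity in the lemma, follow from the hemicontinuity (H1), the local bound in (H2) and the growth (H4) by the classical fact that a locally bounded, hemicontinuous, locally monotone operator on a finite-dimensional space is continuous.

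I would first record that the $\mathbb{R}^n$-inner products inherit (C1) and (C2). Since $\{e_i\}$ is orthonormal in $(H,(\cdot,\cdot))$, the Euclidean length of $x$ equals $|u|$, so restricting the original (C1) to $H_n$ gives (C1) on $\mathbb{R}^n$ with the same constant $c_1$; setting $\Phi^n_{ij}(s)=(e_i,\Phi(s)e_j)$ produces symmetric matrices (self-adjointness of $\Phi(s)$) with $\int_0^T\|\Phi^n(s)\|\,ds<\infty$ and $(x,\Phi^n(s)x)=(u,\Phi(s)u)$, so (\ref{08201354}) yields (C2) on $\mathbb{R}^n$. Next, for the integrability (\ref{01220047}): by (\ref{01212354}) with $x=y$, together with the fact that $P_n(t)$ is an orthogonal projection on $H^t$ and $Q_n$ one on $U$, one has $\|\sigma(t,x)\|_t^2=\|P_n(t)B(t,u)Q_n\|^2_{L_2(U,H^t)}\leq\|B(t,u)\|^2_{L_2(U,H^t)}\leq f(t)(1+|u|_t^2)$ by (H5); while $|b(t,x)|=|P_n(t)A(t,u)|\leq C_n\|A(t,u)\|_{V^*}\leq c_3C_n\|\iota_t^*A(t,u)\|_{V^*}$ by (\ref{01220115}) and (\ref{eq 20240720 00}). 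Raising (H4) to the power $(\alpha-1)/\alpha$ and using that $\|u\|_V$ and $|u|$ are bounded on $\{|x|\leq R\}$ (all norms being equivalent on the finite-dimensional $H_n$) bounds $\sup_{|x|\leq R}|b(t,x)|$ by $C(n,R)(f(t)+C(n,R))^{(\alpha-1)/\alpha}$, whose integral over $[0,T]$ is finite by Hölder's inequality since $f\in L^1$.

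It then remains to verify the two monotonicity-type bounds. For condition (i), I use (\ref{01212353}), with the test vector $x-y$ representing $u-v$, to identify the drift cross term as $(x-y,b(t,x)-b(t,y))_t=\langle\iota_t^*A(t,u)-\iota_t^*A(t,v),u-v\rangle$, and the bilinearity behind (\ref{01212354}) together with the norm-reducing property of the orthogonal projections $P_n(t)$ (in $H^t$) and $Q_n$ (in $U$) to obtain $\|\sigma(t,x)-\sigma(t,y)\|_t^2\leq\|B(t,u)-B(t,v)\|^2_{L_2(U,H^t)}$; adding these and invoking (H2) gives the bound $[f(t)+\rho(v)]|x-y|_t^2$, so one may take $K_t(R)=f(t)+\sup_{|x|\leq R}\rho(u)$, which lies in $L^1([0,T])$ because $\rho$ is locally bounded by its growth estimate in (H2). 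Condition (ii) follows in the same way from (\ref{01212353}) and (H3), since $2(x,b(t,x))_t+\|\sigma(t,x)\|_t^2\leq 2\langle\iota_t^*A(t,u),u\rangle+\|B(t,u)\|^2_{L_2(U,H^t)}\leq f(t)(1+|x|_t^2)$. With all hypotheses of the lemma in force, existence and uniqueness for (\ref{ode}) follow at once. The one genuinely delicate point, which I would treat with care, is that $P_n(t)$ is orthogonal with respect to $(\cdot,\cdot)_t$ and not the original inner product $(\cdot,\cdot)$: it is exactly this time-dependent orthogonality that makes (\ref{01212124}) valid and that lets the projections be discarded without destroying the $(\cdot,\cdot)_t$-monotonicity and coercivity inherited from (H2) and (H3).
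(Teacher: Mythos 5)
Your proposal is correct and follows essentially the same route as the paper: reduce to the preceding lemma, use the identities (\ref{01212124}), (\ref{01212353}), (\ref{01212354}) together with (\hyperlink{H2}{H2})--(\hyperlink{H5}{H5}) and the $(\cdot,\cdot)_t$-orthogonality of $P_n(t)$ to verify (i), (ii) and (\ref{01220047}), and obtain continuity of the coefficients from (\hyperlink{H1}{H1}), (\hyperlink{H2}{H2}) and (\hyperlink{H4}{H4}) (the paper phrases this via demicontinuity of $\iota_t^*A(t,\cdot)$ as in Remark 4.1.1 of \cite{LR2}, which in finite dimensions is exactly your ``classical fact''). Your explicit check that (\hyperlink{C1}{C1})--(\hyperlink{C2}{C2}) descend to $\mathbb{R}^n$ is a detail the paper leaves implicit but is consistent with its argument.
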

	\noindent {\bf Proof}. It suffices to verify the conditions in the lemma above with $b$ and $\sigma$ respectively replaced by $a^n$ and $b^n$ defined in (\ref{01220053}) and $(x,y)_t=a_t^{ij}x^iy^j$. The verification of $(i)$, $(ii)$ and (\ref{01220047}) can be readily inferred from (\hyperlink{H2}{H2})-(\hyperlink{H5}{H5}), (\ref{01212353}), (\ref{01212354}) and  (\ref{01220115}). Subsequently, it suffices to show the continuity in $x\in\mathbb{R}^n$ for $a.e.\ t\in[0,T]$ fixed. The assumptions $(\hyperlink{H2}{H2})$ and $(\hyperlink{H4}{H4})$ imply the continuity for $b^n$. For the continuity of $a^n$, we mention that $(\hyperlink{H1}{H1})$ and $(\hyperlink{H2}{H2})$ implies $\iota_t^*A(t,x)$ is demicontinuous for $a.e.$ $t\in[0,T]$, as proved in Remark 4.1.1 of \cite{LR2}. Subsequently, the continuity follows easily from (\ref{proj}) and  (\ref{01220053}).$\hfill\blacksquare$\par
	The next result is a uniform  moment estimate for the approximating equations $\{X^n\}_{n\geq 1}$.
	\begin{lemma}\label{estimate}
		For any $p\geq 2$, there exists a constant $C_p>0$ such that for any $n\geq 1$,
		\begin{align}
			E\Big[\sup_{t\in[0,T]}|X^n_t|_t^p\Big]+E\Big[\big(\int_{0}^{T}\|X^n_t\|_V^\alpha dt\big)^\frac{p}{2}\Big]\leq C_p\big(1+|\xi|_H^p\big).
		\end{align}
	\end{lemma}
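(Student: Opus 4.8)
The plan is to apply the Itô formula from Theorem \ref{ito} to the finite-dimensional approximating solution $X^n$, exploiting the projection identity (\ref{01212124}) to reduce the drift and noise terms to the coercivity estimate (\hyperlink{H3}{H3}), and then close a Gronwall-type argument using the time-dependent norms. The key point throughout is that the time-dependent inner products $(\cdot,\cdot)_t$ are uniformly equivalent to $(\cdot,\cdot)$ by (\hyperlink{C1}{C1}), so all estimates in the $|\cdot|_t$-norms transfer to the fixed norm $|\cdot|$ up to the constant $c_1$, and conversely.

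First I would apply Theorem \ref{ito} to $|X^n_t|_t^2$. The crucial simplification is that $X^n_t\in H_n$ and $P_n(t)$ is the $(\cdot,\cdot)_t$-orthogonal projection, so by (\ref{01212124}) we have
\begin{align*}
	\langle P_n(t)A(t,X^n_t),\iota_t^* X^n_t\rangle=\langle A(t,X^n_t),\iota_t^* X^n_t\rangle=\langle \iota_t^*A(t,X^n_t),X^n_t\rangle,
\end{align*}
and similarly $\|P_n(t)B(t,X^n_t)Q_n\|_{L_2(U,H^t)}^2\leq\|B(t,X^n_t)\|_{L_2(U,H^t)}^2$. Feeding these into the Itô formula and invoking the coercivity (\hyperlink{H3}{H3}) yields, after bounding the $\Phi$-term by $c_1^2\|\Phi(s)\|_{\mathcal{L}(H)}|X^n_s|_s^2$ (using (\hyperlink{C2}{C2}) and (\hyperlink{C1}{C1})), an inequality of the form
\begin{align*}
	|X^n_t|_t^2+c\int_{0}^{t}\|X^n_s\|_V^\alpha ds\leq |\xi^n|^2+\int_{0}^{t}\big(f(s)+c_1^2\|\Phi(s)\|_{\mathcal{L}(H)}\big)\big(1+|X^n_s|_s^2\big)ds+M_t,
\end{align*}
where $M_t=2\int_0^t(X^n_s,P_n(s)B(s,X^n_s)Q_n\,dW_s)_s$ is a local martingale. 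Here $g(s)\dot{=}f(s)+c_1^2\|\Phi(s)\|_{\mathcal{L}(H)}$ is integrable on $[0,T]$ by $f\in L^1$ and (\hyperlink{C2}{C2}).

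Next I would raise this to the power $p/2$ and take the supremum over $t$, then apply the Burkholder–Davis–Gundy inequality to control $E[\sup_t |M_t|^{p/2}]$. The bracket of $M$ is estimated by (\hyperlink{H5}{H5}): $\langle M\rangle_t\leq C\int_0^t |X^n_s|_s^2\,\|B(s,X^n_s)\|_{L_2(U,H^s)}^2\,ds\leq C\int_0^t g(s)|X^n_s|_s^2(1+|X^n_s|_s^2)ds$, so BDG gives a bound by $\tfrac12 E[\sup_{s\le t}|X^n_s|_s^p]$ (absorbed into the left side) plus $C\,E[\int_0^t g(s)(1+|X^n_s|_s^p)ds]$. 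The $|\xi^n|^2$ term is controlled uniformly in $n$ since $|\xi^n|\leq c_1^2|\xi|$ by (\ref{01220115})-type bounds and the equivalence of norms, giving the dependence $1+|\xi|_H^p$. After these absorptions, Gronwall's lemma applied to $\phi(t)\dot{=}E[\sup_{s\le t}|X^n_s|_s^p]$ with the integrable kernel $g$ yields the uniform bound on $E[\sup_t|X^n_t|_t^p]$; the bound on $E[(\int_0^T\|X^n_s\|_V^\alpha ds)^{p/2}]$ then follows by moving the coercivity term back and using the already-established sup-bound.

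The main obstacle is the localization needed to justify BDG and the expectation-taking, since a priori $M$ is only a local martingale and $\rho$ appears nowhere here (coercivity (\hyperlink{H3}{H3}) is genuinely dissipative, so unlike uniqueness I do not need $\rho$). I would introduce stopping times $\tau_R=\inf\{t:|X^n_t|_t\geq R\}\wedge T$, derive all estimates on $[0,\tau_R]$ where the martingale terms have finite expectation, obtain a bound uniform in $R$ and $n$, and then let $R\to\infty$ by monotone convergence. Care must also be taken that the constant in Gronwall depends only on $\int_0^T g(s)ds$ and on the BDG/absorption constants, hence is independent of $n$ — this is where the uniform equivalence (\hyperlink{C1}{C1}) of the norms and the structural identities (\ref{01212124}), (\hyperlink{H3}{H3}), (\hyperlink{H5}{H5}) are all essential.
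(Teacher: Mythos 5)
Your proposal is correct and follows essentially the same route as the paper: It\^o's formula from Theorem \ref{ito} combined with the orthogonality of $P_n(t)$ in $H^t$, coercivity (\hyperlink{H3}{H3}) and (\hyperlink{H5}{H5}), BDG, stopping-time localization and Gronwall. The only cosmetic difference is that the paper obtains the $p$-th moment of the supremum by applying It\^o's formula to $x\mapsto x^{p/2}$ of $|X^n_t|_t^2$, whereas you raise the $p=2$ energy inequality to the power $p/2$; the paper itself uses exactly your power-raising device in its final step to control $E\big[\big(\int_0^T\|X^n_t\|_V^\alpha\,dt\big)^{p/2}\big]$ from the already-established sup-bound.
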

	\noindent {\bf Proof}.
	Apply the $\mathrm{It\hat{o}}$ formula established in Theorem \ref{ito}, we then obtain:
	\begin{align}\nonumber
		&|X^n_t|_t^2\\=\ &|\xi^n|^2+2\int_{0}^{t}\big\langle P_n(s)A(s,X^n_s),\iota_s^*X^n_s\big\rangle ds+2\int_{0}^{t}\big(P_n(s)B(s,X^n_s)Q_ndW_s,X^n_s\big)_s\nonumber\\\ &+\int_{0}^{t}\|P_n(s)B(s,X^n_s)Q_n\|^2_{L_2(U,H^s)} ds+\int_{0}^{t}(X^n_s,\Phi(s)X_s^n)ds.\label{01232045}
	\end{align}
	Since $t\rightarrow |X^n_t|_t$ is continuous for $a.e.\ \omega$ due to our assumptions on the time-dependent norm $|\cdot|_t$, we can define $$\tau^n_M\dot{=}\inf\{t>0:|X^n_t|_t\geq M\ \text{or}\ \int_{0}^{t}\|X^n_s\|_V^\alpha ds\geq M\}. $$ 
	\ \ \ \ Now, we can apply the $\mathrm{It\hat{o}}$ formula to $f(|X^n_t|_t^2)$ with $f(x)=x^\frac{p}{2}$ for $p>2$, we then obtain, using the orthogonality of $P_n(s)$ in $H^s$,
	\begin{align}
		\nonumber&|X^n_t|_t^p\\\leq\ \nonumber&|\xi^n|^p+\frac{p}{2}\int_{0}^{t}|X^n_s|_s^{p-2}\big(2\langle A(s,X^n_s), \iota_s^*X_s^n \rangle+\|B(s,X^n_s)\|^2_{L_2(U,H^s)}\big)ds\\ \nonumber&+\frac{p}{2}\int_{0}^{t}|X_s^n|_s^{p-2}(X^n_s,\Phi(s)X^n_s)ds+p\int_{0}^{t}|X^n_s|_s^{p-2}\big(B(s,X^n_s)Q_ndW_s, X^n_s\big)_s\\ &+\frac{p(p-2)}{2}\int_{0}^{t}|X^n_s|_s^{p-4}\|\big(P_n(s)B(s,X^n_s)Q_n\big)^*X^n_s\|^2_{U}ds.\label{01250109}
	\end{align}
	By (\hyperlink{C1}{C1}),  (\hyperlink{H3}{H3}) and (\hyperlink{H5}{H5}), it follows that
	\begin{align}\label{eq 20240720}
		&\ |X^n_t|_t^p+\frac{pc}{2}\int_{0}^{t}|X^n_s|_s^{p-2}\|X^n_s\|_V^\alpha ds\\\nonumber\leq& \ |\xi^n|^p+\frac{p}{2}\int_{0}^{t}f(s)\big(1+|X_s^n|_s^2\big)|X^n_s|_s^{p-2}ds+p\int_{0}^{t}|X^n_s|_s^{p-2}\Big(B(s,X^n_s)Q_n dW_s, X^n_s\Big)_s\\\nonumber&\ +C_p\int_{0}^{t}\|\Phi(s)\|_{\mathcal{L}(H)}|X_s^n|_s^pds+C_p\int_{0}^{t}|X^n_s|_s^{p-2}\|B(s,X^n_s)\|_{L_2(U,H^s)}^2ds
		\\ \nonumber\leq&\ |\xi^n|^p+C_p\int_{0}^{t}(f(s)+\|\Phi(s)\|_{\mathcal{L}(H)})(1+|X^n_s|^p_s)ds
		+p\int_{0}^{t}|X^n_s|_s^{p-2}\Big(B(s,X^n_s)Q_n dW_s, X^n_s\Big)_s.
	\end{align}
	Replacing $t$ by $t\wedge\tau^n_M$ and taking expectation at both sides, we obtain by the BDG inequality \begin{align*}
		&E\big[\sup_{s\in[0,t\wedge\tau^n_M]}|X^n_{s}|_{s}^p\big]+\frac{pc}{2}E
		\big[\int_{0}^{t\wedge\tau^n_M}|X^n_s|_s^{p-2}\|X^n_s\|_V^\alpha ds\big]\\\leq\ & |\xi^n|^p+C_p\int_{0}^{t}\big(f(s)+\|\Phi(s)\|_{\mathcal{L}(H)}\big) (1+E\sup_{r\in[0,s\wedge\tau^n_M]}|X^n_{r}|_{r}^p )ds\\&\ +C_pE\Big(\int_{0}^{t\wedge\tau^n_M}|X^n_s|_s^{2p-2}\|B(s,X^n_s)\|^2_{L_2(U,H^s)}ds \Big)^{\frac{1}{2}}\\\leq\ & |\xi^n|^p+C_p\int_{0}^{t}\big(f(s)+\|\Phi(s)\|_{\mathcal{L}(H)}\big) \big(1+E\sup_{r\in[0,s\wedge\tau^n_M]}|X^n_{r}|^p_{r} \big)ds\\&\ +C_pE\Big[\sup_{s\in[0,t\wedge\tau^n_M]}|X^n_s|_s^{\frac{p}{2}}\Big(\int_{0}^{t\wedge\tau^n_M}f(s)|X^n_s|_s^{p-2}\big(1+|X^n_s|_s^2)ds \Big)^{\frac{1}{2}}\Big]\\\leq\ & |\xi^n|^p+C_p\int_{0}^{t\wedge\tau^n_M}\big(f(s)+\|\Phi(s)\|_{\mathcal{L}(H)}\big) \big(1+E\big[\sup_{r\in[0,s\wedge\tau^n_M]}|X^n_{r}|^p_{r} \big]\big)ds \\&\ +C_p\int_{0}^{t}f(s)E\big[\sup_{r\in[0,s\wedge\tau^n_M]}|X^n_{r}|^p_{r}\big]ds+\frac{1}{2} E\Big[\sup_{s\in[0,t\wedge\tau^n_M]}|X^n_s|_s^{p}\Big].
	\end{align*}
	Collecting the terms, we obtain
	\begin{align*}
		&E\big[\sup_{s\in[0,t\wedge\tau^n_M]}|X^n_{s}|_{s}^p\big]+pcE
		\big[\int_{0}^{t\wedge\tau^n_M}|X^n_s|_s^{p-2}\|X^n_s\|_V^{\alpha}ds\big]\\\leq&\ 2|\xi^n|^p+C_p\int_{0}^{t}\big(f(s)+\|\Phi(s)\|_{\mathcal{L}(H)}\big) \Big(E\big[\sup_{r\in[0,s\wedge\tau^n_M]}|X^n_r|_r^{p}\big]  +1\Big)ds.
	\end{align*}
	Applying the Gronwall lemma, there exists a constant $C_p>0$ independent of $M$ and $n$ such that
	$$E\big[\sup_{s\in[0,T\wedge\tau^n_M]}|X^n_{s}|_{s}^p\big]+E
	\big[\int_{0}^{T\wedge\tau^n_M}|X^n_s|_s^{p-2}\|X^n_s\|_V^{\alpha}ds\big]\leq\ C_p. $$
	By letting $M\rightarrow\infty$, we then obtain
	\begin{align}\label{01250244}
		E\big[\sup_{s\in[0,T]}|X^n_{s}|_{s}^p\big]+E
		\big[\int_{0}^{T}|X^n_s|_s^{p-2}\|X^n_s\|_V^{\alpha}ds\big]\leq\ C_p.
	\end{align}
	Let $p=2$ in (\ref{eq 20240720}) to obtain
	\begin{align*}
		\nonumber&\ \  |X^n_t|_t^2+c\int_{0}^{t}\|X^n_s\|_V^{\alpha}ds\\\nonumber\leq&\ |\xi^n|^2+C\int_{0}^{t}(f(s)+\|\Phi(s)\|_{\mathcal{L}(H)})(1+|X^n_s|^2_s)ds
		+p\int_{0}^{t}\Big(B(s,X^n_s)Q_n dW_s, X^n_s\Big)_s.
	\end{align*}
	This implies that
	\begin{align*}
		&E\Big[\big(\int_{0}^{T}\|X^n_s\|_V^\alpha ds\big)^\frac{p}{2}\Big]\\\leq&\  C_p|\xi^n|^p+C_p\big[\int_{0}^{T}\big(f(s)+\|\Phi(s)\|_{\mathcal{L}(H)}\big)ds\big]^\frac{p}{2}\big(1+E\sup_{t\in[0,T]}|X^n_t|^p_t\big)\\&\ +\ C_pE\Big[\Big(\int_{0}^{T}\|Q_n^*B(t,X^n_t)^*\iota_t^*X^n_t\|^2_{U}dt\Big)^\frac{p}{4}\Big]\\\leq&\ C_p|\xi^n|^p+C_p\big[\int_{0}^{T}(f(s)+\|\Phi(s)\|_{\mathcal{L}(H)})ds\big]^\frac{p}{2}\big(1+E\sup_{t\in[0,T]}|X^n_t|^p_t\big)\\&\ +\ C_pE\Big[\sup_{t\in[0,T]}|X^n_t|_t^p+1\Big]\Big(\int_{0}^{T}f(t)dt\Big)^\frac{p}{2}.
	\end{align*}
	Combining with (\ref{01250244}), we complete the proof of the lemma.
	$\hfill\blacksquare$\\
	Denote
	\begin{align*}
		K\dot{=}\ L^\alpha\big([0,T]\times\Omega, dt\otimes d\mathbb{P}, V\big),\ J\dot{=}\ L^2\big([0,T]\times\Omega, dt\otimes d\mathbb{P}, L_2(U,H)\big).
	\end{align*}
	\ \  \ \ \ As a corollary of the above lemma and the reflexivity of the Banach space $V$, there exists a subsequence of $\{X^n\}_{n\geq 1}$ (still denoted by $\{X^n\}_{n\geq 1}$ for simplicity) and
	\begin{align}\label{02192216}
		X\in L^p\big(\Omega,L^\infty([0,T],H)\big)\cap L^\frac{\alpha p}{2}\big(\Omega, L^\alpha([0,T],V)\big)\cap K ,\ \forall p\geq 2,
	\end{align} such that
	\begin{align}
		&X^{n}\rightarrow {X}\text{ weakly in $K$},\label{012323121}\\&
		X^{n}\rightarrow {X}\text{ weakly in $L^2([0,T]\times\Omega, dt\otimes d\mathbb{P}, H )$},\label{012323122}\\&
		X^{n}\rightarrow {X}\text{ in the weak $*$ topology of  $L^2\big(\Omega, L^\infty([0,T], H)\big)$}.\label{012323123}
	\end{align}
	
	By Lemma \ref{estimate} again, and  due to (\ref{eq 20240720 00}), (\hyperlink{H4}{H4}) and (\hyperlink{H5}{H5}), there exist a subsequence (still labeled by $n\in \mathbb{N}$), $Y\in K^*=L^\frac{\alpha}{\alpha-1}\big([0,T]\times\Omega,\ dt\otimes d\mathbb{P},\ V^*\big)$ and $Z\in J$ such that
	\begin{align}
		\label{012400291}&A(\cdot,X^n_\cdot)\rightarrow Y\ \text{weakly in $K^*$},\\
		\label{012400292}&P_{n}(\cdot)B(\cdot,X^n_\cdot)Q_n\rightarrow Z\ \text{weakly in $J$},\\
		\label{012400293}&\int_{0}^{\cdot}P_n(s)B(s,X^n_s)Q_ndW_s\rightarrow \int_{0}^{\cdot}Z(s)dW_s, \text{weakly in $\mathcal{M}_T^2(H)$}\nonumber\\
		&\ \ \ \ \ \ \ \ \ \ \ \ \ \ \ \ \ \ \text{and in the weak $*$ topology of $L^\infty\big([0,T],L^2(\Omega,H)\big)$},
	\end{align}
	where $\mathcal{M}_T^2(H)$ is the space of all $H$-valued continuous, square integrable martingales $M_\cdot$ equipped with the norm
	$$\|M\|_T\dot{=}\Big(E|M_T|^2\Big)^\frac{1}{2}=\sup_{t\in[0,T]}\Big(E|M_t|^2\Big)^\frac{1}{2}.$$
	Note, since
	$$dX^n_t=P_n(t)A(t,X^n_t)dt+P_n(t)B(t,X^n_t)Q_ndW_t,$$
	as the proof of (\ref{01202028}) we can show that
	$$d\iota_t^*X^n_t=\iota^*_tP_n(t)A(t,X^n_t)dt+\iota_t^*P_n(t)B(t,X^n_t)Q_ndW_t+\Phi(t)X^n_tdt.$$
	Then for any
	\begin{align*}
		v\in\mathop{\scalebox{1.5}[1.5]{{$\cup$}}}_{n=1}^\infty H_n,\phi\in L^\infty([0,T]\times\Omega),\ \text{we have}
	\end{align*}
	\begin{align}\label{01232315}
		&E\Big[\int_{0}^{T} \big(\iota_t^*X^n_t, v\big) \phi(t)dt\Big]\\=&\ E\Big[\int_{0}^{T}\phi(t)dt \big( \xi^n , v\big)\Big]+E\Big[\int_{0}^{T}\int_{0}^{t}\big( \iota_s^*P_n(s)A(s,X^n_s),v\big)ds \phi(t)dt\Big]\nonumber\\&+E\Big[\int_{0}^{T}\int_{0}^{t}\Big( \iota_s^*P_n(s)B(s,X^n_s)Q_ndW_s,v\Big) \phi(t)dt\Big]\nonumber\\&+E\Big[\int_{0}^{T}\int_{0}^{t}\big( \Phi(s)X^n_s, v\big) ds\phi(t)dt\Big]\nonumber\\=&\  \mathrm{I}_n+\cdots+\mathrm{IV}_n\nonumber.
	\end{align}
	Letting $n\rightarrow \infty$, in view of  (\ref{012323121}), the left hand side of (\ref{01232315}) tends to
	\begin{align*}
		E\Big[\int_{0}^{T} \big(\iota_t^*X_t, v\big) \phi(t)dt\Big].
	\end{align*}
	Denote $\eta(t)=\int_{t}^{T}\phi_sds,\ t\in[0,T]$, then as $n\rightarrow\infty$,
	\begin{align*}
		&\mathrm{I}_n\rightarrow E\big[\eta(0)(\xi,v)\big]= E\Big[\int_{0}^{T}\phi(t)dt \big( \xi , v\big)\Big].
	\end{align*}
	By (\ref{01212124}) and (\ref{012400291}), as $n\rightarrow\infty$,
	\begin{align*}
		\mathrm{II}_n= E\Big[\int_{0}^{T}\eta(s)\big( \iota_s^*P_n(s)A(s,X^n_s),v\big)ds\Big]\rightarrow E\Big[\int_{0}^{T}\eta(s)\big( \iota_s^*Y_s,v\big)ds\Big].
	\end{align*}
	By (\ref{012400292}) and (\hyperlink{C1}{C1}), we have
	\begin{align}\label{01240056}
		\iota^*_\cdot P_{n}(\cdot)B(\cdot,X^n_\cdot)Q_n\rightarrow \iota_\cdot^*Z_\cdot\ \text{weakly in $J$},
	\end{align}
	which implies
	\begin{align}
		&\int_{0}^{\cdot}\iota_s^*P_n(s)B(s,X^n_s)Q_ndW_s\rightarrow \int_{0}^{\cdot}\iota_s^*Z_sdW_s,
	\end{align}
	in the weak $*$ topology of $L^\infty\big([0,T],L^2(\Omega,H)\big)$. Thus as $n\rightarrow\infty$,
	\begin{align*}
		\mathrm{III}_n=\ &E\Big[\int_{0}^{T}\Big(\int_{0}^{t} \iota_s^*P_n(s)B(s,X^n_s)Q_ndW_s,\phi(t)v\Big) dt\Big]
		\\\rightarrow\ &E\Big[\int_{0}^{T}\Big(\int_{0}^{t} \iota_s^*Z_sdW_s,\phi(t)v\Big) dt\Big].
	\end{align*}
	By (\ref{012323121}), as $n\rightarrow \infty$
	\begin{align*}
		\mathrm{IV}_n=\ &E\Big[\int_{0}^{T}\Big(\Phi(s)X^n_s ,\eta(s)v\Big) ds\Big]
		\rightarrow E\Big[\int_{0}^{T}\Big(\Phi(s)X_s ,\eta(s)v\Big) ds\Big].
	\end{align*}
	Putting the above limits together, we obtain
	\begin{align*}
		E\Big[\int_{0}^{T} \big(\iota_t^*X_t, \phi(t)v\big) dt\Big]=\ E\Big[\int_{0}^{T}\langle\xi+\int_{0}^{t}\iota_s^*Y_s ds+\int_{0}^{t}\iota_s^*Z_sdW_s+\int_{0}^{t}\Phi(s)X_s,\phi(t)v\rangle dt\Big].
	\end{align*}
	Due to the arbitrariness of $\phi$ and $v$, we find the following equality in $V^*$,
	\begin{align}\label{05110042}
		\iota_t^*X_t=\xi+\int_{0}^{t}\iota_s^*Y_sds+\int_{0}^{t}\iota_s^*Z_s dW_s+\int_{0}^{t}\Phi(s)X_sds.
	\end{align}
	Because $\iota_\cdot^*X_\cdot\in L^\alpha([0,T]\times\Omega,V)\cap L^2\big(\Omega,L^\infty([0,T],H)\big)$ and our assumptions on $\iota_\cdot^*$ and $\Phi(\cdot)$ $\big($see (\hyperlink{C2}{C2}) and (\hyperlink{C3}{C3})$\big)$, we assert that $\iota_\cdot^*X_\cdot$ has a continuous trajectory in $H$, see e.g., Theorem 3.1 in \cite{P}. By Lemma \ref{app} in the Appendix A, we then have  for $\forall\ e\in V$, $t\in[0,T]$,
	\begin{align*}
		&\big( X_t, e \big)=\langle \iota_t^*X_t,\iota_{-t}^*e\rangle\\=\ &\langle \xi, e\rangle +\int_{0}^{t}\langle \iota_{s}^*Y_s, \iota_{-s}^*e\rangle ds+\int_{0}^{t}\big( \iota_s^*Z_sdW_s,\iota_{-s}^*e\big)+\int_{0}^{t}\big(\Phi(s)X_s,\iota_{-s}^*e\big)ds\\&\ -\int_{0}^{t}(\iota_s^*X_t,\iota_{-s}^*\Phi(s)\iota_{-s}^*e)dt\\=\ &\langle\xi+\int_{0}^{t} Y_s  ds+\int_{0}^{t}Z_sdW_s,e\rangle.
	\end{align*}
	Thus we obtain for any $t\in[0,T]$,
	\begin{align}\label{01280342}
		X_t=\xi+\int_{0}^{t}Y_sds+\int_{0}^{t}Z_sdW_s.
	\end{align}
	Then, to complete the proof of the existence of the solution of equation(\ref{spde}), it remains to show that
	\begin{align}\label{01242004}
		Y_\cdot={A}(\cdot,X_\cdot)\text{ and }Z_\cdot={B}(\cdot,X_\cdot).
	\end{align}
	The following two lemmas are needed.
	
	\begin{lemma}\label{01250332}
		Let $Z_\cdot\in J$ be given as in (\ref{012400292}). We have
		\begin{align*}
			\iota_\cdot^*B\big(\cdot,X^n_\cdot\big)\rightarrow \iota_\cdot^*Z_\cdot\text{ weakly in $J$ as $n\rightarrow\infty$.}
		\end{align*}
	\end{lemma}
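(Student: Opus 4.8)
The plan is to deduce the claim from the weak convergence $\iota_\cdot^*P_n(\cdot)B(\cdot,X^n_\cdot)Q_n\to\iota_\cdot^*Z_\cdot$ in $J$ already recorded in (\ref{01240056}), by controlling the error produced by inserting the projections $P_n(t)$ and $Q_n$. Thus it suffices to show $\iota_\cdot^*\big(B(\cdot,X^n_\cdot)-P_n(\cdot)B(\cdot,X^n_\cdot)Q_n\big)\to 0$ weakly in $J$. The essential device is the identity, valid for any $C,G\in L_2(U,H)$ and any $t\in[0,T]$,
$$\langle\iota_t^*C,G\rangle_{L_2(U,H)}=\sum_{i\geq1}(\iota_t^*Cf_i,Gf_i)=\sum_{i\geq1}(Cf_i,Gf_i)_t=\langle C,G\rangle_{L_2(U,H^t)},$$
which follows from the self-adjointness of $\iota_t^*$ together with $(\iota_t^*x,y)=(x,y)_t$. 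Testing against an arbitrary $G\in J$ and using this identity, the desired weak convergence reduces to
$$E\int_0^T\big\langle B(t,X^n_t)-P_n(t)B(t,X^n_t)Q_n,\,G_t\big\rangle_{L_2(U,H^t)}\,dt\longrightarrow 0.$$

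The advantage of passing to the $H^t$-pairing is that $P_n(t)$ is the orthogonal projection onto $H_n$ with respect to $(\cdot,\cdot)_t$, hence self-adjoint there. Writing $B^n_t\dot{=}B(t,X^n_t)$ and splitting
$$B^n_t-P_n(t)B^n_tQ_n=\big(I-P_n(t)\big)B^n_t+P_n(t)B^n_t(I-Q_n),$$
I would treat the two pieces separately. For the first, self-adjointness of $I-P_n(t)$ in $H^t$ and the Cauchy--Schwarz inequality give
$$\big|\langle(I-P_n(t))B^n_t,G_t\rangle_{L_2(U,H^t)}\big|=\big|\langle B^n_t,(I-P_n(t))G_t\rangle_{L_2(U,H^t)}\big|\leq\|B^n_t\|_{L_2(U,H^t)}\,\|(I-P_n(t))G_t\|_{L_2(U,H^t)}.$$
Since $\{e_i(t)\}_{i\geq1}$ is an orthonormal basis of $H^t$, we have $P_n(t)\to I$ strongly in $H^t$, so $\|(I-P_n(t))G_t\|_{L_2(U,H^t)}\to0$ pointwise while being dominated by $\|G_t\|_{L_2(U,H^t)}$. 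For the second piece, $(I-Q_n)f_i=0$ for $i\leq n$ yields
$$\langle P_n(t)B^n_t(I-Q_n),G_t\rangle_{L_2(U,H^t)}=\sum_{i>n}(P_n(t)B^n_tf_i,G_tf_i)_t\leq\|B^n_t\|_{L_2(U,H^t)}\Big(\sum_{i>n}|G_tf_i|_t^2\Big)^{1/2},$$
and the tail $\sum_{i>n}|G_tf_i|_t^2\to0$ pointwise because $G_t$ is Hilbert--Schmidt.

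To conclude, I would use the uniform bound $\sup_n E\int_0^T\|B^n_t\|^2_{L_2(U,H^t)}\,dt<\infty$, which follows from (\hyperlink{H5}{H5}), Lemma \ref{estimate} and the equivalence of the $L_2(U,H)$ and $L_2(U,H^t)$ norms granted by (\hyperlink{C1}{C1}). Applying Cauchy--Schwarz in $L^2([0,T]\times\Omega,dt\otimes d\mathbb{P})$ to each of the two error terms and invoking the dominated convergence theorem (with dominating functions $\|G_t\|_{L_2(U,H^t)}$ and the uniform $L^2$-bound on $\|B^n_t\|$) sends both contributions to zero, which completes the proof. The main obstacle here is conceptual rather than computational: because the projections $P_n(t)$ are orthogonal only with respect to the time-dependent inner product $(\cdot,\cdot)_t$, one cannot directly exploit their self-adjointness in the original pairing of $J$; the identity $\langle\iota_t^*C,G\rangle_{L_2(U,H)}=\langle C,G\rangle_{L_2(U,H^t)}$ is precisely what restores self-adjointness and renders the projection-error estimates transparent.
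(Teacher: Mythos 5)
Your proposal is correct and follows essentially the same route as the paper: the identical decomposition $B(t,X^n_t)-P_n(t)B(t,X^n_t)Q_n=(I-P_n(t))B(t,X^n_t)+P_n(t)B(t,X^n_t)(I-Q_n)$, the same transfer of the pairing to $L_2(U,H^t)$ via $\iota_t^*$ to exploit the self-adjointness of $P_n(t)$ in $H^t$, and the same use of $P_n(t)\to I$ strongly in $H^t$ together with the uniform bound on $\|B(t,X^n_t)\|_{L_2(U,H^t)}$. You merely spell out the Cauchy--Schwarz and dominated convergence steps that the paper leaves implicit.
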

	\noindent {\bf Proof}.
	Indeed, for any $\mathcal{B}\in J$,
	\begin{align*}
		&E\int_{0}^{T}\big(\iota^*_sB(s,X^n_s), \mathcal{B}_s\big)_{L_2(U,H)} ds\\=\ & E\int_{0}^{T}\Big(\iota^*_s(I-P_n(s))B(s,X^n_s), \mathcal{B}_s\Big)_{L_2(U,H)} ds+E\int_{0}^{T}\Big(\iota^*_sP_n(s)B(s,X^n_s)Q_n, \mathcal{B}_s\Big)_{L_2(U,H)} ds\\ & +E\int_{0}^{T}\Big(\iota^*_sP_n(s)B(s,X^n_s)\big(I-Q_n\big), \mathcal{B}_s\Big)_{L_2(U,H)} ds\\=\ &\mathrm{I}_n+\mathrm{II}_n+\mathrm{III}_n.
	\end{align*}
	As $n\rightarrow\infty$, by (\ref{01240056}),
	\begin{align*}
		&\mathrm{II}_n\rightarrow E\int_{0}^{T}\Big(\iota^*_sZ_s, \mathcal{B}_s\Big)_{L_2(U,H)} ds.
	\end{align*}
	Noticing that  $\|\cdot\|_{L_2(U,H^s)}\leq C\|\cdot\|_{L_2(U,H)} $ for some $C>0$, the self-adjointness of $P_n(s)$ in $H^s$ and that $P_n(s)u\rightarrow u$ as $n\rightarrow\infty$ in $|\cdot|_s$ for any $s\in[0,T]$ and $u\in H$, we easily deduce that, as $n\rightarrow\infty$,
	\begin{align*}
		&\mathrm{I}_n= E\int_{0}^{T}\Big(B(s,X^n_s), \big(I-P_n(s)\big)\mathcal{B}_s\Big)_{L_2(U,H^s)} ds\rightarrow 0,\\&
		\mathrm{III}_n=E\int_{0}^{T}\Big(B(s,X^n_s), P_n(s)\mathcal{B}_s\big(I-Q_n\big)\Big)_{L_2(U,H^s)} ds\rightarrow 0,
	\end{align*}
	completing the proof.$\hfill\blacksquare$\par
	
	\begin{lemma}
		For any $\psi\in L^\infty([0,T],\mathbb{R}_+)$, we have
		\begin{align}\label{01242112}
			E\big[\int_{0}^{T}\psi(t)|X_t|_t^2dt\big]\leq \liminf_{n\rightarrow\infty} E\big[\int_{0}^{T}\psi(t)|X^{n}_t|_t^2dt\big].
		\end{align}
	\end{lemma}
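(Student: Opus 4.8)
The plan is to recognize the functional
$u\mapsto E\big[\int_0^T\psi(t)|u_t|_t^2\,dt\big]$ as the squared seminorm induced by a bounded, symmetric, positive semidefinite bilinear form on the Hilbert space $\mathcal{H}\dot{=}\,L^2\big([0,T]\times\Omega,dt\otimes d\mathbb{P},H\big)$, and then to invoke the weak lower semicontinuity of such a quadratic form together with the weak convergence $X^n\rightarrow X$ in $\mathcal{H}$ recorded in (\ref{012323122}). The whole argument is soft functional analysis; no genuine obstacle is expected, only the bookkeeping needed to land in the correct space.

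\textbf{Setting up the form.} For $u,v\in\mathcal{H}$ I would define
\[
[u,v]_\psi\dot{=}\,E\Big[\int_0^T\psi(t)\big(u_t,v_t\big)_t\,dt\Big]=E\Big[\int_0^T\psi(t)\big(\iota_t^*u_t,v_t\big)\,dt\Big],
\]
using the representation $(\cdot,\cdot)_t=(\iota_t^*\cdot,\cdot)$ from Subsection 2.2. Since $\iota_t^*$ is self-adjoint the form is symmetric, and since $\psi\geq 0$ it is positive semidefinite; by (\hyperlink{C1}{C1}) it is bounded, $|[u,v]_\psi|\leq c_1^2\|\psi\|_\infty\|u\|_{\mathcal{H}}\|v\|_{\mathcal{H}}$. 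In particular $[u,u]_\psi=E\int_0^T\psi(t)|u_t|_t^2\,dt$ is exactly the quantity in (\ref{01242112}).

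\textbf{The key step.} The algebraic identity
\[
[X^n,X^n]_\psi=[X,X]_\psi+2[X,X^n-X]_\psi+[X^n-X,X^n-X]_\psi\geq[X,X]_\psi+2[X,X^n-X]_\psi
\]
follows from positive semidefiniteness of the last term. Now $X\in L^2\big(\Omega,L^\infty([0,T],H)\big)\subseteq\mathcal{H}$ by (\ref{02192216}), and because $\|\iota_t^*\|_{\mathcal{L}(H)}\leq c_1^2$ and $\psi\in L^\infty$, the element $g_t\dot{=}\,\psi(t)\iota_t^*X_t$ lies in $\mathcal{H}$; here the joint measurability of $t\mapsto\iota_t^*X_t$ is guaranteed by Lemma \ref{01201913}(i) and Remark \ref{01210304}. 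Hence $v\mapsto[X,v]_\psi=(g,v)_{\mathcal{H}}$ is a continuous linear functional on $\mathcal{H}$, and the weak convergence of (\ref{012323122}) gives $[X,X^n-X]_\psi\to 0$.

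\textbf{Conclusion.} Passing to $\liminf_{n\to\infty}$ in the displayed inequality yields
$\liminf_n[X^n,X^n]_\psi\geq[X,X]_\psi$, which is precisely (\ref{01242112}). (Equivalently, one may simply observe that $u\mapsto[u,u]_\psi^{1/2}$ is a strongly continuous convex functional on $\mathcal{H}$ and hence weakly lower semicontinuous by Mazur's theorem; I prefer the explicit expansion above as it makes the role of (\ref{012323122}) transparent.) The only care required is to check $g\in\mathcal{H}$ and that (\ref{012323122}) indeed provides weak convergence in $\mathcal{H}$, both of which are immediate from (\hyperlink{C1}{C1}) and the bounds already established.
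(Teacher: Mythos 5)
Your argument is correct and is essentially the paper's own proof: both rest on the observation that $\psi(\cdot)\iota_\cdot^*X_\cdot$ lies in $L^2([0,T]\times\Omega,dt\otimes d\mathbb{P},H)$, so that the weak convergence (\ref{012323122}) gives $E\int_0^T\psi(t)(\iota_t^*X_t,X_t^n)\,dt\to E\int_0^T\psi(t)|X_t|_t^2\,dt$, after which one concludes by positivity. The only (immaterial) difference is the final elementary step — the paper applies Cauchy--Schwarz and divides by $\big(E\int_0^T\psi(t)|X_t|_t^2\,dt\big)^{1/2}$, while you expand the square and discard the nonnegative term, which sidesteps the degenerate case where that quantity vanishes.
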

	\noindent {\bf Proof}.
	By $(\ref{012323122})$,
	\begin{align*}
		&E\big[\int_{0}^{T}\psi(t)|X_t|_t^2dt\big]= E\big[\int_{0}^{T}\psi(t)\big(\iota_t^*X_t,X_t\big)dt\big]\\=&\ \lim\limits_{n\rightarrow\infty} E\big[\int_{0}^{T}\psi(t)\big(\iota_t^*{X}^n_t,X_t\big)dt\big]=\lim\limits_{n\rightarrow\infty} E\big[\int_{0}^{T}\psi(t)\big({X}^n_t,X_t\big)_tdt\big]\\\leq&\  E\big[\int_{0}^{T}\psi(t)|X_t|_t^2dt\big]^\frac{1}{2}\liminf\limits_{n\rightarrow\infty}E\big[\int_{0}^{T}\psi(t)|{X}^n_t|_t^2dt\big]^\frac{1}{2},
	\end{align*}
	which implies (\ref{01242112}).$\hfill\blacksquare$\par
	Now, we are ready to prove the claim (\ref{01242004}). Take any $H$-valued progressively measurable process $\phi_\cdot\in K\cap L^2\big(\Omega, L^\infty([0,T],H)\big)$, we can then define
	\begin{align}\label{01242351}
		\Psi(t)=\int_{0}^{t}\big( f(s)+c_1^2\|\Phi(s)\|_{\mathcal{L}(H)}+\rho(\phi_s)\big)ds.
	\end{align}
	Applying the $\mathrm{It\hat{o}}$ formula to $e^{-\Psi(t)}|X_t|_t^2 $ and $e^{-\Psi(t)}|X^n_t|_t^2 $ respectively, and then taking  expectation, we obtain,
	\begin{align}\label{01250338}
		&E\Big[ e^{-\Psi(t)}|X_t|_t^2\Big]-|\xi|^2\\=\ &E\Big[\int_{0}^{t}e^{-\Psi(s)}\big(2\langle Y_s,\iota_s^*X_s\rangle+\|Z_s\|_{L_2(U,H^s)}^2+(\Phi(s)X_s,X_s)-\Psi'(s)|X_s|_s^2 \big) ds\Big]\nonumber.
	\end{align}
	and
	\begin{align*}
		&\ E\Big[e^{-\Psi(t)}|X^n_t|_t^2\Big]-|\xi^n|^2\\\leq&\  E\Big[\int_{0}^{t}e^{-\Psi(s)}\Big(2\langle A(s,X^{n}_s),\iota_s^*X_s^n\rangle+\|B(s,X_s^n)\|^2_{L_2(U,H^s)}+\big(\Phi(s)X^n_s,X^n_s\big)\Big)ds\Big]\\& \ -E\Big[\int_{0}^{t}e^{-\Psi(s)}\Psi'(s)|X^n_s|_s^2ds\Big]\\=&\ E\Big[\int_{0}^{t}e^{-\Psi(s)}\Big(2\langle A(s,X^{n}_s)-A(s,\phi_s),\iota_s^*X_s^n-\iota_s^*\phi_s\rangle+\|B(s,X_s^n)-B(s,\phi_s)\|^2_{L_2(U,H^s)}\\&\ +\big(\Phi(s)(X^n_s-\phi_s),X^n_s-\phi_s\big)\Big)ds\Big] -E\Big[\int_{0}^{t}e^{-\Psi(s)}\Psi'(s)|X^n_s-\phi_s|_s^2ds\Big]\\&\ +E\Big[\int_{0}^{t}e^{-\Psi(s)}\Big(2\langle A(s,\phi_s),\iota_s^*X^n_s\rangle+2\langle A(s,X^n_s)-A(s,\phi_s),\iota_s^*\phi_s\rangle \\&\ -\|B(s,\phi_s)\|_{L_2(U,H^s)}^2+2\langle B(s,X^n_s),B(s,\phi_s)\rangle_{L_2(U,H^s)}-2\Psi'(s)\big(X_s^n,\phi_s\big)_s\\&\ +\Psi'(s)|\phi_s|_s^2+2\big(\Phi(s)X_s^n,\phi_s\big)-\big(\Phi(s)\phi_s,\phi_s\big)\Big)ds\Big].
	\end{align*}
	By (\hyperlink{C1}{C1}), (\hyperlink{H2}{H2}) and $(\ref{01242351})$, the right hand side is less than
	\begin{align*}
		&\ E\Big[\int_{0}^{t}e^{-\Psi(s)}\Big(2\langle A(s,\phi_s),\iota_s^*X^n_s\rangle+2\langle A(s,X^n_s)-A(s,\phi_s),\iota_s^*\phi_s\rangle \\&\ \ \ \   -\|B(s,\phi_s)\|_{L_2(U,H^s)}^2+2\langle B(s,X^n_s),B(s,\phi_s)\rangle_{L_2(U,H^s)}-2\Psi'(s)\big(X_s^n,\phi_s\big)_s\\&\ \ \ \  +\Psi'(s)|\phi_s|_s^2+2\big(\Phi(s)X_s^n,\phi_s\big)-\big(\Phi(s)\phi_s,\phi_s\big)\Big)ds\Big].
	\end{align*}
	Taking $\psi\in L^\infty([0,T],\mathbb{R}_+)$, from the above estimate,  we  obtain
	\begin{align*}
		&\ E\Big[\int_{0}^{T}\psi(t)\big(e^{-\Psi(t)}|X^n_t|_t^2-|\xi|^2\big)dt\Big]\\\leq&\ E\Big[\int_{0}^{T}\psi(t)\int_{0}^{t}e^{-\Psi(s)}\Big(2\langle A(s,\phi_s),\iota_s^*X^n_s\rangle+2\langle A(s,X^n_s)-A(s,\phi_s),\iota_s^*\phi_s\rangle \\&\ \ \ \   -\|B(s,\phi_s)\|_{L_2(U,H^s)}^2+2\langle B(s,X^n_s),B(s,\phi_s)\rangle_{L_2(U,H^s)}-2\Psi'(s)\big(X_s^n,\phi_s\big)_s\\&\ \ \ \  +\Psi'(s)|\phi_s|_s^2+2\big(\Phi(s)X_s^n,\phi_s\big)-\big(\Phi(s)\phi_s,\phi_s\big)\Big)dsdt\Big].
	\end{align*}
	Combining (\ref{012323121})-(\ref{012323123}) and   (\ref{012400291}) with Lemma \ref{estimate} and Lemma \ref{01250332}, we deduce that
	\begin{align}\label{05120250}
		&\ E\Big[\int_{0}^{T}\psi(t)\big(e^{-\Psi(t)}|X_t|_t^2-|\xi|^2\big)dt\Big]\\\leq
		&\nonumber \liminf_{n\rightarrow\infty}E\Big[\int_{0}^{T}\psi(t)\big(e^{-\Psi(t)}|X^n_t|_t^2-|\xi|^2\big)dt\Big]\\\leq&\ \nonumber E\Big[\int_{0}^{T}\psi(t)\int_{0}^{t}e^{-\Psi(s)}\Big(2\langle A(s,\phi_s),\iota_s^*X_s\rangle+2\langle A(s,X_s)-A(s,\phi_s),\iota_s^*\phi_s\rangle \\&\ \ \ \nonumber\   -\|B(s,\phi_s)\|_{L_2(U,H^s)}^2+2\langle B(s,X_s),B(s,\phi_s)\rangle_{L_2(U,H^s)}-2\Psi'(s)\big(X_s,\phi_s\big)_s\\&\ \ \ \  +\Psi'(s)|\phi_s|_s^2+2\big(\Phi(s)X_s,\phi_s\big)-\big(\Phi(s)\phi_s,\phi_s\big)\Big)dsdt\Big].
		{}\end{align}
	Combining (\ref{05120250}) with (\ref{01250338}), we obtain that  for $\forall\ \phi_\cdot\in K\ \cap\  L^2(\Omega,L^\infty\big([0,\infty],H)\big)$,
	\begin{align}
		\nonumber&E\Big[\int_{0}^{T}\psi(t)\int_{0}^{t}e^{-\Psi(s)}\Big(2\langle Y_s-A(s,\phi_s),\iota_s^*({X}_s-\phi_s)\rangle+\|B(s,\phi_s)-Z_s\|^2_{L_2(U,H^s)}\\&\ +\big(\Phi(s)(X_s-\phi_s),X_s-\phi_s\big)-\Psi'(s)|X_s-\phi_s|_s^2 \Big)dsdt\Big]\leq 0.\label{01250426}
	\end{align}
	We first let $\phi_\cdot=X_\cdot$, to obtain
	\begin{align*}
		Z_\cdot=B(\cdot,X_\cdot). \text{\ \ \ $dt\otimes d\mathbb{P}$ a.s.}
	\end{align*}
	Taking $\phi=X-\varepsilon\tilde{\phi}v$ in (\ref{01250426}), for some $\tilde{\phi}\in L^\infty([0,T],\mathbb{R})$ and $v\in V$, $\varepsilon>0$, we have
	\begin{align}
		\nonumber&E\Big[\int_{0}^{T}\psi(t)\int_{0}^{t}e^{-\Psi(s)}\tilde{\phi}_s\Big(2\varepsilon\langle Y_s-A(s,X_s-\varepsilon\tilde{\phi}_sv),\iota_s^*v\rangle\\&\ +\varepsilon^2\tilde{\phi}_s^2\big(\Phi(s)v,v\big)-\varepsilon^2\Psi'(s)\tilde{\phi}_s^2|v|_s^2 \Big)dsdt\Big]\leq 0.\label{01251722}
	\end{align}
	Dividing both sides by $\varepsilon$, we have
	\begin{align}
		\nonumber&E\Big[\int_{0}^{T}\psi(t)\int_{0}^{t}e^{-\Psi(s)}\tilde{\phi}_s\Big(2\langle Y_s-A(s,X_s-\varepsilon\tilde{\phi}_sv),\iota_s^*v\rangle\\&\ +\varepsilon\tilde{\phi}_s^2\big(\Phi(s)v,v\big)-\varepsilon\Psi'(s)\tilde{\phi}_s^2|v|_s^2 \Big)dsdt\Big]\leq 0.\label{01251957}
	\end{align}
	Letting $\varepsilon\rightarrow0$, by (\hyperlink{H1}{H1}), (\hyperlink{H4}{H4}) and (\ref{02192216}), we get
	\begin{align}
		\nonumber&E\Big[\int_{0}^{T}\psi(t)\int_{0}^{t}e^{-\Psi(s)}\tilde{\phi}_s\langle Y_s-A(s,X_s),\iota_s^*v\rangle dsdt\Big]\leq 0.
	\end{align}
	The arbitrariness of $\tilde{\phi}$, $v$ and $\psi$ implies that
	$$\iota_\cdot^* Y_\cdot=\iota_\cdot^*A(\cdot,X_\cdot),$$
	which yields $Y_\cdot=A(\cdot,X_\cdot)$ taking into account (\hyperlink{C4}{C4}). The proof is complete.$\hfill\blacksquare$
	\section{Stochastic Stefan type problem on a moving hypersurface}\par
	\ \ \ \
	\setcounter{equation}{0}
	In this section, we will present the application of the result established in Section 2 to solve the stochastic Stefan type problem on  moving hypersurfaces. The transformed version of this problem  on a fixed domain will fit into the framework provided in Section 2, but not the usual monotonicity framework as in \cite{LR2} or \cite{RSZ}. A rigorous derivation of the transformed SPDE on the fixed hypersurface as well as the equivalence of the solutions are provided in Subsection 3.5 below. We would like to mention that the Stefan type problem we considered also includes the stochastic porous media equation with $p\in[\frac{2n}{n+2},\frac{2n}{n-2}]\big(\Psi(s)=|s|^{p-2}s\ \text{in}\ (\ref{pme})\big)$.\par
	Later on, as numerous constant estimates are involved, we will denote by $A_{t,u}\approx B_{t,u}$ if there exists a constant $C>0$ independent of $t\in[0,T] $ and element $u$ such that $ A_{t,u}\in[C^{-1}B_{t,u},C B_{t,u}]$. Analogously, we denote $A_{t,u}\lesssim B_{t,u}$ for the one-sided inequaility.\par
	\subsection{Preliminaries on moving hypersurfaces}
	We begin with some preliminaries on the hypersurfaces. We will sketch the framework outlined in Subsection 2.1 of \cite{DE}. Let $n\in \mathbb{N}$. $\Gamma\subseteq \mathbb{R}^{n+1}$ is an $n$-dimensional compact $C^k$ hypersurface if there exists a finite collection of local parametrizations $\big\{X^i\big \}_{i\leq N}$ such that for each $i\leq N$, $X^i\in C^k(U^i,\mathbb{R}^{n+1})$, where $U^i$ are connected open sets in $\mathbb{R}^n$, satisfying the following conditions:
	\begin{itemize}{}
		\item [(i)]: $X^i$ is injective and rank$(\nabla X^i(\theta))=n$ for any $\theta\in U^i$ and $i\leq N$.
		\item [(ii)]: For each $i$, there exists an open set $\tilde{V}^i$ in ${\mathbb{R}^{n+1}}$ such that $X^i(U^i)=\tilde{V}^i\cap \Gamma$.
		\item[(iii)]: $\{V^i\ \dot{=}\ X^i(U^i)\}_{i\leq N}$ is an open cover of $\Gamma$.
	\end{itemize}
	The definition of a compact $C^k$ hypersurface implies that $\Gamma$ is a compact $C^k$ submanifold in $\mathbb{R}^{n+1}$. Consequently, it inherits both the topology and the $C^k$ structure from $\mathbb{R}^{n+1}$, with local coordinates $\{(X^i)^{-1}\}_{i\leq N}:V^i\ \dot{=}\ X^i(U^i)\rightarrow U^i$. Also, it inherits the canonical Riemannian metric on $\mathbb{R}^{n+1}$ as a submanifold.\par
	Let $\alpha\leq N$, $\theta\in U^\alpha$. We consider the induced Riemannian metric on $U^\alpha$, given by
	$$g^\alpha_{ij}(\theta)=\frac{\partial X^\alpha_k}{\partial \theta_i}(\theta)\frac{\partial X^\alpha_k}{\partial \theta_j}(\theta),\ i,j= 1,\cdots, n. $$
	We also denote the inverse of $\{g^\alpha_{ij}(\theta)\}_{1\leq i,j\leq n}$ as $\{g_\alpha^{ij}(\theta)\}_{1\leq i,j\leq n}$ and  det$(g^\alpha_{ij}(\theta))$  as $g^\alpha(\theta)$ for $\theta\in U^\alpha$.\par
	A function $f:\Gamma\rightarrow\mathbb{R}$ is k-times continuously differentiable if all the functions $f\circ X^i:U^i\rightarrow\mathbb{R}$, $i\leq N$, are k-times continuously differentiable. We denote by $C^1(\Gamma)$ the set of functions $f:\Gamma\rightarrow\mathbb{R}$ that is continuously differentiable and similarly for $C^2(\Gamma)$. \par
	Using local charts, one can express the Laplace-Beltrami operator on $\Gamma$ locally. For $f\in C^2(\Gamma)$, we have
	$$\big(\Delta_\Gamma f\big)(X^\alpha(\theta))=\frac{1}{\sqrt{g^\alpha(\theta)}}\frac{\partial}{\partial \theta_j}\Big(g_\alpha^{ij}(\theta)\sqrt{g^\alpha(\theta)}\frac{\partial f}{\partial \theta_i}(\theta)\Big), $$
	where  $\frac{\partial f}{\partial \theta_i}$ denotes $\frac{\partial f\circ X^\alpha}{\partial \theta_i}(\theta)$ for $\theta\in U^\alpha$.
	The tangential gradient of $f:\Gamma\rightarrow\mathbb{R}$ is locally given by
	\begin{align}\label{05080028}
		(\nabla_\Gamma f)_k\big(X^\alpha(\theta)\big)=g_\alpha^{ij}(\theta)\frac{\partial f}{\partial \theta_j}\frac{\partial X^\alpha_k}{\partial \theta_i}(\theta),\  k=1,\cdots,n+1.
	\end{align}
	From (\ref{05080028}), we can deduce that for any $\alpha\leq N$, $\theta\in U^\alpha$,
	\begin{align}\label{08010005}
		\frac{\partial f}{\partial\theta_i}(\theta)=(\nabla_\Gamma f)_k(X^\alpha(\theta))\frac{\partial X_k^\alpha}{\partial \theta_i}(\theta).
	\end{align}
	Remember the convention that the appearance of  repeated index means summation over the index.
	As functions on $\Gamma$, we can show that $\Delta_\Gamma f$ and $\nabla_\Gamma f$ are independent of the choice of local charts. Sometimes, we will write $(\nabla_\Gamma f)_k$ as $\underline{D}_k f$ for short.
	Then, the divergence of $g:\Gamma\rightarrow\mathbb{R}^{n+1}$ is given by
	$$\nabla_\Gamma\cdot g=\sum_{i=1}^{n+1}\underline{D}_i g^i.$$
	Moreover, it holds that $$\Delta_\Gamma f=\sum_{i=1}^{n+1}\underline{D}_i\underline{D}_if=\nabla_\Gamma\cdot\nabla_\Gamma f.$$
	\par
	For $p\in[1,\infty)$, we let $L^p(\Gamma)$  denote the space of measurable functions $f:\Gamma\rightarrow\mathbb{R}$, that have finite $\|f\|_{L^p(\Gamma)}$ norm defined by
	$$\|f\|_{L^p(\Gamma)}=\Big(\int_{\Gamma}|f|^p(y)\mathrm{dvol}_\Gamma(dy)\Big)^\frac{1}{p}.$$
	In the above formula, $\mathrm{dvol}_\Gamma(dy)$ denotes the volume measure corresponding to the Riemannian metric $g_{ij}$ on $\Gamma$. By means of local charts, it can be written as
	$$\|f\|_{L^p(\Gamma)}^p=\sum_{\alpha=1}^{N}\int_{U^\alpha}\chi^\alpha(X^\alpha(\theta))|f|^p(X^\alpha(\theta))\sqrt{g^\alpha(\theta)}d\theta,$$
	where $\{\chi^\alpha\}_{\alpha\leq N}$ is a partition of unity with regard to the open cover $\{V^\alpha\}_{\alpha\leq N}$ on $\Gamma$.
	$L^p(\Gamma)$ is a Banach space for $p\geq1$ and for $p=2$ a Hilbert space. For $1\leq p<\infty$, the space $C^1(\Gamma)$ is dense in $L^p(\Gamma)$.
	The Sobolev space $H^{1}(\Gamma)$ is the closure of $C^1(\Gamma)$ under the norm
	$$ \|f\|_{H^1(\Gamma)}\dot{=}\big(\|f\|_{L^2(\Gamma)}^2+\|\nabla_\Gamma f\|^2_{L^2(\Gamma)}\big)^\frac{1}{2},$$
	and the space $H^{2,p}(\Gamma)$ is the closure of $C^2(\Gamma)$ under the norm
	\begin{align}\label{05081740}
		\|f\|_{H^{2,p}(\Gamma)}\dot{=}\big(\|f\|_{L^p(\Gamma)}^p+\|\nabla_\Gamma f\|^p_{L^p(\Gamma)}+\sum_{i,j=1}^{n+1}\|\underline{D}_i\underline{D}_jf\|^p_{L^p(\Gamma)}\big)^\frac{1}{p}.
	\end{align}
	The dual of $H^{1}(\Gamma)$ will be denoted as $H^{-1}(\Gamma)$,
	equipped with the norm
	\begin{align*}
		\|f\|_{{H}^{-1}(\Gamma)}=\sup_{v\in H^1(\Gamma),v\neq 0}\frac{ _{{H}^{-1}(\Gamma)}\langle f,v\rangle_{{H}^{1}(\Gamma)}}{\centering{\|v\|}_{H^1(\Gamma)}}.
	\end{align*}

	\subsection{The geometry of moving hypersurfaces}
	In this subsection, we will provide a brief overview of the moving hypersurfaces as outlined in Section 5 of \cite{DE}. Let $\{\Gamma_t\}_{t\in[0,T]}$ be a family of $C^3$ compact hypersurfaces indexed by $t\in[0,T]$. We assume that there exists a map $G(\cdot,\cdot):[0,T]\times \Gamma_0\rightarrow \mathbb{R}^{n+1}$, where $G\in C^1\big([0,T], C^2(\Gamma_0)\big)$, satisfying  that $G(t,\cdot):\Gamma_0\rightarrow\Gamma_t$ is a $C^2$ diffeomorphism between $\Gamma_0$ and $\Gamma_t$ for every $t\in[0,T]$ and $G(0,\cdot)=I_d$. Furthermore, we define the velocity of the evolving of $\Gamma_t$ by
	\begin{align}\label{08270324}
		v\big(t,G(t,\cdot)\big)=\frac{\partial G}{\partial t}(\cdot,t).
	\end{align}
	In accordance with Subsection 3.1, if $\{X^{i}(\theta), i\leq N\}$ is a local parametrization of $\Gamma_0$, then   $\Gamma_t$ has a family of local parametrization given by $\{X^{t,i}:U^i\rightarrow G(t,V^i)\}_{i\leq N}$ where $V^i=X^i(U^i)$, $X^{t,i}(\theta)=G(t,X^i(\theta))$ for $\theta\in U^i$ and the induced Riemannian metric
	\begin{align}\label{05082113}
		g_{ij}^{t,\alpha}(\theta)=\frac{\partial X_k^{t,\alpha}}{\partial \theta_i}(\theta)\frac{\partial X_k^{t,\alpha}}{\partial \theta_j}(\theta), \text{ $\theta$ $\in$ $U^\alpha$, $\alpha\leq N$.}
	\end{align}
	Moreover, its determinant det$\big(g_{ij}^{t,\alpha}(\theta)\big)$  is denoted by $g^\alpha(t,\theta)$ and its inverse by $g^{ij}_{t,\alpha}(\theta)$ for $\theta\in U^\alpha$.\par
	Consider the space-time surface given by
	\begin{align}\label{05120430}
		Q_T=\ \mathop{\scalebox{1.5}[1.5]{$\cup$}}_{t\in[0,T]} \{t\}\times\Gamma_t,
	\end{align}
	which is considered as an immersed submanifold of $[0,T]\times\Gamma_0$, induced by the map
	$$\tilde{G}:[0,T]\times\Gamma_0\rightarrow \mathbb{R}_+\times\mathbb{R}^{n+1},\ \tilde{G}(t,y)=\big(t,G(t,y)\big). $$
	An appropriate time derivative, which is usually called ``material derivative" and is denoted as $\partial^\bullet$, is defined on this moving surface $Q_T$, that is,
	\begin{align}\label{md}
		\partial^\bullet f\dot{=}\ \frac{\partial f}{\partial t}+v \cdot \nabla f=\left (\frac{d}{dt}f(t, G(t,\cdot))\right )\circ G^{-1}.
	\end{align}
	\par
	Indeed, the material derivative is identical to the pushforward vector field on $Q_T$ given by
	\begin{align}\label{05082030}
		\partial^\bullet\ {=}\  \mathrm{d}\tilde{G}(\frac{\partial}{\partial t}).
	\end{align}
	The following transport formula for time-dependent surface integrals is proved in \cite{DE}.
	\begin{theorem}\label{tf}
		Let $\{\Gamma_t\}_{t\in[0,T]}$ be the family of evolving hypersurfaces described above. Assume that $f$ is a function on $Q_T$ such that all the following quantities exist. Then
		$$\frac{d}{dt}\int_{\Gamma_t}f \mathrm{dvol}_{\Gamma_t}= \int_{\Gamma_t} \big(\partial^\bullet f+ f\nabla_{\Gamma_t}\cdot v \big)\mathrm{dvol}_{\Gamma_t}$$
	\end{theorem}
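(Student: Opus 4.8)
The plan is to pull the moving surface integral back to the fixed parameter domains $U^\alpha$, where the domain of integration and the coordinate weight become independent of $t$, so that one may differentiate under the integral sign and then identify the two resulting terms as the material-derivative contribution and the surface-divergence contribution respectively.

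First I would use the family of local parametrizations $\{X^{t,\alpha}\}_{\alpha\le N}$ of $\Gamma_t$ from Subsection 3.2 together with the partition of unity $\{\chi^\alpha\}_{\alpha\le N}$ subordinate to the cover $\{V^\alpha\}$ of $\Gamma_0$. Transporting $\chi^\alpha$ along $G$, that is, taking $\chi^\alpha\circ G(t,\cdot)^{-1}$ as a partition of unity on $\Gamma_t$, the key observation is that in the chart $X^{t,\alpha}(\theta)=G(t,X^\alpha(\theta))$ this weight equals $\chi^\alpha(X^\alpha(\theta))$, which carries no $t$-dependence. Consequently
$$\int_{\Gamma_t}f\,\mathrm{dvol}_{\Gamma_t}=\sum_{\alpha\le N}\int_{U^\alpha}\chi^\alpha(X^\alpha(\theta))\,f(X^{t,\alpha}(\theta))\,\sqrt{g^\alpha(t,\theta)}\,d\theta,$$
with both $U^\alpha$ and $\chi^\alpha(X^\alpha(\theta))$ frozen in $t$. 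The assumed $C^1$-in-time, $C^2$-in-space regularity of $G$ justifies differentiating under the integral sign, and the product rule produces one term where $\partial_t$ hits $f(X^{t,\alpha}(\theta))$ and one where it hits $\sqrt{g^\alpha(t,\theta)}$.

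For the first term I would simply note that, since $X^{t,\alpha}(\theta)=G(t,X^\alpha(\theta))$, the definition (\ref{md}) of the material derivative gives $\frac{\partial}{\partial t}f(X^{t,\alpha}(\theta))=\big(\frac{d}{dt}f(t,G(t,\cdot))\big)(X^\alpha(\theta))=(\partial^\bullet f)(X^{t,\alpha}(\theta))$. The substance of the proof lies in the second term, namely the identity $\partial_t\sqrt{g^\alpha(t,\theta)}=(\nabla_{\Gamma_t}\cdot v)(X^{t,\alpha}(\theta))\,\sqrt{g^\alpha(t,\theta)}$. Here I would invoke Jacobi's formula $\partial_t\sqrt{g}=\tfrac12\sqrt{g}\,g^{ij}_{t,\alpha}\,\partial_t g^{t,\alpha}_{ij}$, differentiate (\ref{05082113}) to get $\partial_t g^{t,\alpha}_{ij}=\frac{\partial^2 X^{t,\alpha}_k}{\partial t\,\partial\theta_i}\frac{\partial X^{t,\alpha}_k}{\partial\theta_j}+\frac{\partial X^{t,\alpha}_k}{\partial\theta_i}\frac{\partial^2 X^{t,\alpha}_k}{\partial t\,\partial\theta_j}$, and use $\frac{\partial X^{t,\alpha}_k}{\partial t}=\frac{\partial G_k}{\partial t}(t,X^\alpha(\theta))=v_k(t,X^{t,\alpha}(\theta))$ from (\ref{08270324}). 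By the symmetry of $g^{ij}_{t,\alpha}$ this collapses to $\tfrac12 g^{ij}_{t,\alpha}\partial_t g^{t,\alpha}_{ij}=g^{ij}_{t,\alpha}\frac{\partial v_k}{\partial\theta_j}\frac{\partial X^{t,\alpha}_k}{\partial\theta_i}$, which is exactly the local formula (\ref{05080028}) for $\sum_k(\nabla_{\Gamma_t}v_k)_k=\nabla_{\Gamma_t}\cdot v$. Adding the two contributions and summing over $\alpha$ then recovers the transport formula.

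I expect the geometric identity $\partial_t\sqrt{g^\alpha}=(\nabla_{\Gamma_t}\cdot v)\sqrt{g^\alpha}$ to be the main obstacle, since this is where the moving geometry genuinely enters; one must verify carefully that the $\theta$-derivatives of $v_k\circ X^{t,\alpha}=\frac{\partial X^{t,\alpha}_k}{\partial t}$ assemble into the intrinsic tangential-divergence formula rather than into some chart-dependent quantity. The only other delicate point is the preliminary reduction, namely choosing the partition of unity so that it is $t$-independent in coordinates, which is precisely what makes the interchange of $\frac{d}{dt}$ with the integral legitimate and keeps the domains $U^\alpha$ fixed.
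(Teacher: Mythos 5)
Your proof is correct. The paper does not prove this theorem itself — it defers to Theorem 5.1 of Dziuk--Elliott [DE] — and your chart-based argument (transporting a $t$-independent partition of unity to the fixed parameter domains, differentiating under the integral, and establishing $\partial_t\sqrt{g^\alpha(t,\theta)}=(\nabla_{\Gamma_t}\cdot v)\sqrt{g^\alpha(t,\theta)}$ via Jacobi's formula, $\partial_t X^{t,\alpha}_k=v_k\circ X^{t,\alpha}$ and the local expression (\ref{05080028}) for the tangential gradient) is precisely the standard proof given in that reference.
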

	As a corollary, we can obtain the following Leibniz formula,
	\begin{corollary}\label{lf}
		For $\varphi,\psi\in C^1(Q_T)$, we have
		\begin{align*}
			\frac{d}{dt}\int_{\Gamma_t}\varphi\psi \mathrm{dvol}_{\Gamma_t}= \int_{\Gamma_t} \varphi\partial^\bullet \psi\mathrm{dvol}_{\Gamma_t}+\int_{\Gamma_t} \psi\partial^\bullet \varphi\mathrm{dvol}_{\Gamma_t} +\int_{\Gamma_t}\varphi\psi\nabla_{\Gamma_t}\cdot v \mathrm{dvol}_{\Gamma_t}
		\end{align*}
	\end{corollary}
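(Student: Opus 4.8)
The plan is to deduce the Leibniz formula directly from the transport formula of Theorem~\ref{tf} by applying it to the product $f=\varphi\psi$ and then splitting the material derivative via a product rule. First I would observe that since $\varphi,\psi\in C^1(Q_T)$, their product $\varphi\psi$ again lies in $C^1(Q_T)$, so that all the quantities appearing in the transport formula---in particular $\partial^\bullet(\varphi\psi)$---exist and are continuous on $Q_T$. Hence Theorem~\ref{tf} applies with $f=\varphi\psi$ and yields
\begin{align*}
\frac{d}{dt}\int_{\Gamma_t}\varphi\psi\,\mathrm{dvol}_{\Gamma_t}=\int_{\Gamma_t}\big(\partial^\bullet(\varphi\psi)+\varphi\psi\,\nabla_{\Gamma_t}\cdot v\big)\mathrm{dvol}_{\Gamma_t}.
\end{align*}

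The key remaining step is to verify the Leibniz rule for the material derivative,
\begin{align*}
\partial^\bullet(\varphi\psi)=\varphi\,\partial^\bullet\psi+\psi\,\partial^\bullet\varphi,
\end{align*}
which I would establish from the flow characterization in (\ref{md}), namely $\partial^\bullet f=\big(\tfrac{d}{dt}f(t,G(t,\cdot))\big)\circ G^{-1}$. For each fixed $y\in\Gamma_0$ the map $t\mapsto\varphi(t,G(t,y))\psi(t,G(t,y))$ is a product of two scalar $C^1$ functions of $t$, so the ordinary one-variable product rule gives
\begin{align*}
\frac{d}{dt}\big[\varphi(t,G(t,y))\psi(t,G(t,y))\big]=\psi(t,G(t,y))\frac{d}{dt}\varphi(t,G(t,y))+\varphi(t,G(t,y))\frac{d}{dt}\psi(t,G(t,y)).
\end{align*}
Composing with $G^{-1}$ as in (\ref{md}) turns this pointwise identity on $[0,T]\times\Gamma_0$ into the desired Leibniz rule on $Q_T$. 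Alternatively one may argue from the equivalent expression $\partial^\bullet f=\partial_t f+v\cdot\nabla f$, which is a first-order differential operator and hence a derivation; this gives the product rule immediately.

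Substituting the Leibniz rule into the transport identity completes the proof. I do not expect any substantive obstacle here: the only point requiring care is that $\partial^\bullet$ genuinely obeys the product rule, which is immediate from either of its two characterizations in (\ref{md}). The hypotheses $\varphi,\psi\in C^1(Q_T)$ are exactly what is needed both to justify the application of Theorem~\ref{tf} to $\varphi\psi$ and to guarantee the continuity of every term appearing above.
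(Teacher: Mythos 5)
Your proof is correct and follows the same route as the paper, which simply derives Corollary \ref{lf} from Theorem \ref{tf} applied to $f=\varphi\psi$ together with the product rule for $\partial^\bullet$ (the paper defers the details to Lemma 5.2 of \cite{DE}, where exactly this argument is carried out). Your justification of the Leibniz rule for $\partial^\bullet$ via the flow characterization in (\ref{md}) is the right and complete way to fill in the one nontrivial step.
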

	\vskip 0.3cm
	For the proof of Theorem \ref{tf} and Corollary \ref{lf}, we refer the readers to Theorem 5.1 and Lemma 5.2 in \cite{DE} for details.\par
	Through the time-dependent diffeomorphism $G(t,\cdot)$, we introduce a mapping  from $H^{1}({\Gamma_t})$ into $H^{1}({\Gamma_0})$, by defining $\widetilde{u}^t(y)\dot{=}u(G(t,y))$ for $u\in H^{1}({\Gamma_t})$,  $y\in \Gamma_0$. Similarly, for $v\in H^1(\Gamma_0)$, we can also define $\bar{v}^t(x)\ \dot{=}\ v(G^{-1}(t,x))\in H^{1}({\Gamma_t})$.  Moreover, we can easily see that the maps $\widetilde{\ \cdot\ }^t:H^1(\Gamma_t)\rightarrow H^1(\Gamma_0)$ are isomorphisms, with their inverse operator $\bar{\ \cdot\ }^t:H^1(\Gamma_0)\rightarrow H^1(\Gamma_t)$, and their operator norms  can be uniformly bounded by a constant independent of $t$. From (\ref{05082030}), we observe that for $\phi\in C^1(Q_T)$,
	\begin{align}\label{05082036}
		\widetilde{\partial^\bullet\phi}^s(s,y)=\Big(\frac{d}{dt} \widetilde{\phi}^s\Big)(s,y).
	\end{align}
	Before the end of this subsection, we will present the following Poincar\'e inequality and Sobolev's embedding. We also  denote by $[\phi]_t$ the average of $\phi\in L^1(\Gamma_t)$, expressed as
	$$[\phi]_t=\frac{1}{|\Gamma_t|}\int_{\Gamma_t}\phi(x)\mathrm{dvol_{\Gamma_t}(x)},$$
	where we denote  {Vol}$_{\Gamma_t}(\Gamma_t)$ by $|\Gamma_t|$.
	\begin{proposition}\label{poincare}
		Let $\{\Gamma_t\}_{t\in[0,T]}$ be the family of moving hypersurfaces described above, we have the following results:\\
		(i) There exists a positive constant c, such that for every $t\in[0,T]$ and $f\in H^{1}(\Gamma_t)$ , we have the inequality
		$$\|f-[f]_t\|_{L^2(\Gamma_t)}\leq c\|\nabla_{\Gamma_t}f\|_{L^2(\Gamma_t)}.$$
		(ii) There exists a positive constant $C$, such that for every $t\in[0,T]$, $f\in H^1(\Gamma_t)$, $q\leq\frac{2n}{n-2}$ and $q<+\infty$, we have
		$$\|f\|_{L^q(\Gamma_t)} \leq C\|f\|_{H^1(\Gamma_t)}.$$
	\end{proposition}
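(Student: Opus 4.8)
The plan is to transfer both inequalities from the single fixed hypersurface $\Gamma_0$, where they are classical, to the whole family $\{\Gamma_t\}_{t\in[0,T]}$ through the diffeomorphisms $G(t,\cdot)$, the only genuine issue being uniformity of the constants in $t$. First I would record the geometric fact underlying everything. Since $G\in C^1\big([0,T],C^2(\Gamma_0)\big)$ and each $G(t,\cdot)$ is a $C^2$ diffeomorphism, the metric coefficients $g^{t,\alpha}_{ij}(\theta)$ of (\ref{05082113}), their inverses $g^{ij}_{t,\alpha}(\theta)$, and the volume densities $\sqrt{g^\alpha(t,\theta)}$ depend continuously on $(t,\theta)$. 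Because $[0,T]$ and $\Gamma_0$ are compact and the cover $\{V^\alpha\}_{\alpha\leq N}$ is finite, and because each $g^{t,\alpha}$ is positive definite, these quantities are bounded above and below by positive constants uniformly in $(t,\theta)$; in particular the family $\{g^{t,\alpha}\}_{t\in[0,T]}$ is uniformly elliptic.

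Using this, the second step is to prove the uniform norm equivalences $\|f\|_{L^q(\Gamma_t)}\approx\|\widetilde f^{\,t}\|_{L^q(\Gamma_0)}$ and $\|\nabla_{\Gamma_t}f\|_{L^2(\Gamma_t)}\approx\|\nabla_{\Gamma_0}\widetilde f^{\,t}\|_{L^2(\Gamma_0)}$, with $\approx$ hiding constants independent of $t$, where $\widetilde f^{\,t}(y)=f(G(t,y))$ is the pullback from Subsection 3.2. The $L^q$ equivalence is the change-of-variables formula in local charts, where the two integrands differ only through the densities $\sqrt{g^\alpha(t,\theta)}$ and $\sqrt{g^\alpha(0,\theta)}$, uniformly comparable by Step~1. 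For the gradient, the decisive observation is that in the chart $X^\alpha$ one has $\widetilde f^{\,t}\circ X^\alpha=f\circ X^{t,\alpha}$ (since $G(t,X^\alpha(\theta))=X^{t,\alpha}(\theta)$), so the coordinate derivatives $\partial_{\theta_i}(\widetilde f^{\,t}\circ X^\alpha)$ and $\partial_{\theta_i}(f\circ X^{t,\alpha})$ literally coincide. By the local formula (\ref{05080028}) the intrinsic squared gradients are the same quadratic form in these derivatives contracted with $g^{ij}_{0,\alpha}$ respectively $g^{ij}_{t,\alpha}$; uniform ellipticity makes the two comparable pointwise, and integrating against the comparable densities yields the $L^2$ equivalence, again with $t$-independent constants.

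The final step is to invoke the classical Poincar\'e inequality and Sobolev embedding on the fixed compact manifold $\Gamma_0$ and transport them back. Part (ii) is immediate: $\|f\|_{L^q(\Gamma_t)}\lesssim\|\widetilde f^{\,t}\|_{L^q(\Gamma_0)}\leq C_0\|\widetilde f^{\,t}\|_{H^1(\Gamma_0)}\lesssim\|f\|_{H^1(\Gamma_t)}$, where $C_0$ is the embedding constant of $H^1(\Gamma_0)\hookrightarrow L^q(\Gamma_0)$ for $q\leq\frac{2n}{n-2}$, $q<\infty$, and the outer equivalences are those of Step~2. For part (i), since $[f]_t$ minimizes $a\mapsto\|f-a\|_{L^2(\Gamma_t)}$ over constants $a\in\mathbb{R}$, I would estimate $\|f-[f]_t\|_{L^2(\Gamma_t)}\leq\|f-c\|_{L^2(\Gamma_t)}$ with the specific choice $c=[\widetilde f^{\,t}]_{\Gamma_0}$; applying the $L^2$-equivalence, the fixed-manifold Poincar\'e inequality $\|\widetilde f^{\,t}-[\widetilde f^{\,t}]_{\Gamma_0}\|_{L^2(\Gamma_0)}\leq c_0\|\nabla_{\Gamma_0}\widetilde f^{\,t}\|_{L^2(\Gamma_0)}$, and finally the gradient equivalence, gives $\|f-[f]_t\|_{L^2(\Gamma_t)}\lesssim\|\nabla_{\Gamma_t}f\|_{L^2(\Gamma_t)}$ with a constant independent of $t$.

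The step requiring the most care is Step~1 together with its consequence, the uniform-in-$t$ norm equivalences: the entire force of the statement is the independence of the constants from $t$, and this rests on extracting uniform upper and lower bounds for the metric data of the whole family $\{\Gamma_t\}_{t\in[0,T]}$ from the $C^1$-in-$t$ regularity of $G$ and the compactness of $[0,T]\times\Gamma_0$, organized consistently across the finite chart cover and the partition of unity $\{\chi^\alpha\}_{\alpha\leq N}$. The only additional subtlety, in (i), is that the $\Gamma_t$-mean $[f]_t$ must be compared with the $\Gamma_0$-mean of the pullback, which is handled by the variational characterization of the mean used above. Once the uniform equivalences are in hand, both (i) and (ii) reduce to their standard counterparts on the single manifold $\Gamma_0$.
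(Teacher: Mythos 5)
Your proposal is correct, and the uniform metric bounds you isolate in Step~1 are exactly the geometric content that the paper also relies on; but the route differs enough to be worth noting. For part (ii) the paper does not pass through $\Gamma_0$ as a manifold at all: it localizes $f$ directly into the Euclidean parameter domains $U^\alpha$ via the partition of unity, applies the Euclidean Sobolev embedding on the compact sets $\mathrm{supp}\,\chi^\alpha\circ X^\alpha\subseteq\mathbb{R}^n$, and uses the identity (\ref{08010005}) together with the uniform comparability of $\sqrt{g^\alpha(t,\theta)}$ to return to the intrinsic norms; your detour through the pullback $\widetilde f^{\,t}$ and the fixed-manifold embedding $H^1(\Gamma_0)\hookrightarrow L^q(\Gamma_0)$ is an equivalent reduction to a fixed reference object, and in fact the norm equivalences you prove in Step~2 are precisely the identities the paper establishes later in Subsection~3.4 (around (\ref{05111935}) and (\ref{05072145})). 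The more substantive difference is in part (i): the paper gives no argument, citing Theorem~2.12 of \cite{DE} for the inequality and external references (\cite{L}, \cite{J}) for the $t$-independence of the constant, whereas you supply a self-contained derivation by combining the variational characterization of the mean (so that $[f]_t$ may be replaced by the $\Gamma_0$-mean of the pullback at the cost of nothing) with the classical Poincar\'e inequality on the single compact manifold $\Gamma_0$ and the uniform gradient equivalence. That argument is sound (it implicitly requires $\Gamma_0$ connected, as does the cited Poincar\'e inequality itself) and buys a proof that is independent of the quoted literature; the paper's citation-based treatment is shorter but leaves the uniformity claim to outside sources.
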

	\noindent {\bf Proof}. For the Poincar\'e inequality (i), we refer the readers to Theorem 2.12 in \cite{DE} for the proof. The constant $c$ can be made independent of $t$, we refer the readers to Corollary 5.8 in \cite{L} or Corollary A.1.2 in \cite{J} for details. For the Sobolev embedding (ii), by the compactness of $\Gamma_t$, the Sobolev embedding theorem in the Euclidean domain  and (\ref{08010005}) , we have for any $\alpha\leq N$,
	\begin{align*}
		&\Big(\int_{{U^\alpha}}\chi^\alpha(X^\alpha(\theta))|f|^q(X^\alpha(\theta))\sqrt{g^\alpha(t,\theta)}d\theta\Big)^{\frac{1}{q}}\\ \leq\ & C\Big(\int_{{supp\chi^\alpha\circ X^\alpha}}|f|^q(X^\alpha(\theta))d\theta\Big)^{\frac{1}{q}}\\ \leq\ & C\Big(\int_{{{supp\chi^\alpha\circ X^\alpha}}} |f\circ X^\alpha(\theta)|^2d\theta+\int_{{supp\chi^\alpha\circ X^\alpha}}|\frac{\partial f\circ X^\alpha}{\partial \theta_i}|^2(\theta)d\theta\Big)^\frac{1}{2}\\ \leq\ & C\Big(\int_{{{supp\chi^\alpha}}} |f|^2\mathrm{dvol}_{\Gamma_t}(dx)+\int_{{supp\chi^\alpha}}{|\nabla_{\Gamma_t} f|^2\mathrm{dvol}_{\Gamma_t}(dx)}\Big)^\frac{1}{2}\\ \leq\ &C\|f\|_{H^1(\Gamma_t)},
	\end{align*}
	for a constant $C$ independent of $t$. We finish the proof of (ii) by adding the integrals on ${U^\alpha}$'s together.
	$\hfill\blacksquare$\\
	\vskip 0.3cm
	\ \ Let $\dot{H}^{-1}(\Gamma_t)$ denote the closed subspace of ${H}^{-1}(\Gamma_t)$ with $$_{{H}^{-1}(\Gamma_t)}\langle f,1\rangle_{{H}^{1}(\Gamma_t)}=0,\ f\in H^{-1}(\Gamma_t),$$
	and $\dot{H}^1(\Gamma_t)$ denotes the closed subspace of $H^1(\Gamma_t)$ with zero average as well as $\dot{L}^p(\Gamma_t)$ and $\dot{H}^{2,p}(\Gamma_t)$.
	For $f\in \dot{H}^{-1}(\Gamma_t)$, the following inequality holds by Proposition \ref{poincare},
	\begin{align*}
		\|f\|_{{H}^{-1}(\Gamma_t)}=&\sup_{v\in H^1(\Gamma_t),v\neq 0}\frac{_{{H}^{-1}(\Gamma_t)}\langle f,v\rangle_{{H}^{1}(\Gamma_t)}}{\big(\|v\|_{L^2(\Gamma_t)}^2+\|\nabla_{\Gamma_t} v\|^2_{L^2(\Gamma_t)}\big)^\frac{1}{2}}\\=& \sup_{v\in H^1(\Gamma_t),v\neq 0}\frac{_{{H}^{-1}(\Gamma_t)}\langle f,v\rangle_{{H}^{1}(\Gamma_t)}}{\big(\|v-[v]_t\|_{L^2(\Gamma_t)}^2+\|\nabla_{\Gamma_t} v\|^2_{L^2(\Gamma_t)}\big)^\frac{1}{2}}\\\geq& \sup_{v\in \dot{H}^1(\Gamma_t),v\neq 0}\frac{_{{H}^{-1}(\Gamma_t)}\langle f,v\rangle_{{H}^{1}(\Gamma_t)}}{\|\nabla_{\Gamma_t} v\|_{L^2(\Gamma_t)}}\cdot \frac{1}{\sqrt{c^2+1}}\ .
	\end{align*}
	Define
	$$\|f\|_{\dot{H}^{-1}(\Gamma_t)}\dot{=}\sup_{v\in \dot{H}^1(\Gamma_t),v\neq 0}\frac{_{{H}^{-1}(\Gamma_t)}\langle f,v\rangle_{{H}^{1}(\Gamma_t)}}{\|\nabla_{\Gamma_t} v\|_{L^2(\Gamma_t)}}{=}\sup_{v\in {H}^1(\Gamma_t),v\neq 0}\frac{_{{H}^{-1}(\Gamma_t)}\langle f,v\rangle_{{H}^{1}(\Gamma_t)}}{\|\nabla_{\Gamma_t} v\|_{L^2(\Gamma_t)}}. $$ Then for any $t\in[0,T],\ f\in \dot{H}^{-1}(\Gamma_t)$,  we have
	\begin{align}\label{05070340}
		\|f\|_{\dot{H}^{-1}(\Gamma_t)}\approx \|f\|_{{H}^{-1}(\Gamma_t)}.
	\end{align}
	By (ii) of Proposition \ref{poincare}, we see that for any $t\in[0,T]$, $p\geq\frac{2d}{d+2}$, we have
	\begin{align}
		\dot{L}^p(\Gamma_t)\hookrightarrow \dot{H}^{-1}(\Gamma_t).
	\end{align}
	Later on, we will take $\dot{H}^{-1}(\Gamma_0)$ as the pivot space of the Gelfand triplet $\dot{L}^p(\Gamma_0)\hookrightarrow \dot{H}^{-1}(\Gamma_0)\hookrightarrow [\dot{L}^p(\Gamma_0)]^*$, equipped with the norm $\|\cdot\|_{\dot{H}^{-1}(\Gamma_0)}$, for $p\geq \frac{2n}{n+2}$.
	
	\subsection{Main results of Section 3}
	\ \ \ \  In this subsection, we will introduce the mathematical setting of stochastic Stefan  problem on moving hypersurfaces. The Stefan type problem models the melting or freezing process of ice-water mixture, representing one of the most classical and well-known free boundary problems. It also has applications in many other areas like finance. Several studies, see e.g. \cite{BD,KM,KMS,KZS}, have been curried out for stochastic Stefan problems. In the context of moving domain, we refer the readers to \cite{AE} and \cite{ACEV} for the well-posedness of the deterministic version of the Stefan problem. The well-posedness of the Stefan problem on moving hypersurfaces perturbed by multiplicative random noise is established in this subsection. We want to stress that this is the first time a stochastically forced Stefan problem is considered in the context of moving domains. \par

	Let $Q_T$ be given in (\ref{05120430}). The equation we consider is written as
	\begin{numcases}{}\label{pme}
		\partial^\bullet X_t+X_t\nabla_{\Gamma_t}\cdot v(t)=\Delta_{\Gamma_t}\Psi(X_t)+B(t,X_t)\partial^\bullet B_t,\ \text{on}\ Q_T
		\\
		X_0=x_0\in \dot{H}^{-1}(\Gamma_0),\nonumber
	\end{numcases}
	where the function $\Psi(\cdot):\mathbb{R}\rightarrow\mathbb{R}$ satisfies the following conditions
	\begin{itemize}
		\item [\hypertarget{$\Psi$1}{{\bf ($\Psi$1)}}] $\Psi$ is continuous,
		\item [\hypertarget{$\Psi$2}{{\bf ($\Psi$2)}}]  $\forall$ $s,t\in\mathbb{R}$, $(t-s)\big(\Psi(t)-\Psi(s)\big)\geq 0$,
		\item [\hypertarget{$\Psi$3}{{\bf ($\Psi$3)}}] There exists $p>1$, $a>0$, $c_4\geq 0$ such that $\forall s\in\mathbb{R}$, $$s\Psi(s)\geq a|s|^p-c_4,$$
		\item [\hypertarget{$\Psi$4}{{\bf ($\Psi$4)}}] There exist positive constants $c_5$ and $c_6$ such that for $s\in\mathbb{R}$,
		$|\Psi(s)|\leq c_5+c_6|s|^{p-1}$.
	\end{itemize}
	It can be easily seen that equation (\ref{sfe}) is of the form (\ref{pme}) with $\Psi(\cdot)=\beta(\cdot)$ given by
	\begin{numcases}{\beta(r)=}\label{05092308}
		\nonumber ar,\ \ r<0,\\
		0,\ \ 0\leq r\leq \rho,\\
		\nonumber b(r-\rho),\ \ r>\rho,
	\end{numcases}
	for some $a,b,\rho>0$. We remark that the porous media equations $(\Psi(s)=|s|^{p-2}s)$ satisfy the above conditions. The main purpose of this section is to establish the well-posedness of (\ref{pme}) with $p$ in an interval. \par
	As in Subsection 3.2, we can map elements from ${H}^{-1}(\Gamma_t)$ to ${H}^{-1}(\Gamma_0)$. For $f\in H^{-1}(\Gamma_t)$, we define $\widetilde{f}^{t,*}\in H^{-1}(\Gamma_0)$ as follows,
	$$\ _{H^{-1}(\Gamma_0)}\langle\widetilde{f}^{t,*},u\rangle_{H^{1}(\Gamma_0)}\ \dot{=}\ _{H^{-1}(\Gamma_t)}\langle f,\bar{u}^t\rangle_{H^{1}(\Gamma_t)},\ \text{for $u\in H^1(\Gamma_0)$.}$$ \par
	Similarly, we can define $\bar{g}^{t,*}$ for $g\in H^{-1}(\Gamma_0)$. We can also show that the operators $\widetilde{\ \cdot \ }^{t,*}:H^{-1}(\Gamma_t)\rightarrow H^{-1}(\Gamma_0)$ are isomorphisms, with the inverse operator $\bar{\ \cdot\ }^{t,*}:H^{-1}(\Gamma_0)\rightarrow H^{-1}(\Gamma_t)$, and their operator norms  can be uniformly bounded by a constant independent of $t\in[0,T]$.\par
	Notably, if $f\in \dot{H}^{-1}(\Gamma_t)$, we observe that $\widetilde{f}^{t,*}\in \dot{H}^{-1}(\Gamma_0)$ since $\ _{H^{-1}(\Gamma_0)}\langle\widetilde{f}^{t,*},1\rangle_{H^1(\Gamma_0)}$ $=\ _{H^{-1}(\Gamma_t)}\langle f,1\rangle_{H^1(\Gamma_t)}=0$. But  $\widetilde f^{t}\notin \dot{H}^1(\Gamma_0)$ in general for $f\in \dot{H}^1(\Gamma_t)$.\par
	Let us begin by introducing necessary function spaces on time-dependent domains. Set
	\begin{align*}
		C\big([0,T],\dot{H}^{-1}(\Gamma_\cdot)\big)\dot{=}&\big\{\{u(t)\}_{t\in[0,T]}:u(t)\in \dot{H}^{-1}(\Gamma_t) \text{ for any $t\in[0,T]$} \\&\ \text{such that $\widetilde{u(\cdot)}^{\cdot,*}$ }\in C([0,T],\dot{H}^{-1}\big(\Gamma_0)\big)  \big\},
	\end{align*}
	\vskip -0.8cm
	\begin{align*}
		L^p\big([0,T],\dot{L}^p(\Gamma_\cdot)\big)\dot{=}&\big\{\{u(t)\}_{t\in[0,T]}:u(t)\in \dot{L}^{p}(\Gamma_t) \text{ for $a.e.$ $t\in[0,T]$}, \\&\ \text{such that $\widetilde{u(\cdot)}^{\cdot,*}$ }\in L^p([0,T],\dot{L}^p(\Gamma_0)\big)  \big\}.
	\end{align*}
	Consider the Radon-Nikodym derivative of the volume measure generated by $g^t$ with respect to that generated by $g^0=g$, locally defined by
	\begin{align}\label{05092313}
		\frac{\mathrm{dvol}_{g^t}}{\mathrm{dvol}_g}\big(X^\alpha(\theta)\big)=\sqrt{\frac{g^\alpha(t,\theta)}{g^\alpha(\theta)}},\ \text{ $\theta\in U^\alpha$}.
	\end{align}
	It belongs to $C^1([0,T]\times\Gamma_0)$ as well as its inverse $\frac{\mathrm{dvol}_{g}}{\mathrm{dvol}_{g^t}}$. Indeed, we can show
	\begin{align}\label{05101834}
		\frac{\mathrm{dvol}_{g^t}}{\mathrm{dvol}_{g}}(y)\approx 1.
	\end{align}
	\begin{remark}\label{05112043}
		For $p\geq\frac{2n}{n+2}$, if $f\in \dot{L}^p(\Gamma_t)\subseteq \dot{H}^{-1}(\Gamma_t)$, then we have  $\widetilde{f}^{t,*}(y)=f(G(t,y))\frac{\mathrm{dvol}_{g^t}}{\mathrm{dvol}_{g}}(y)\in \dot{L}^p(\Gamma_0)$.
	\end{remark}
	Also, we say a family $\{u(t)\}_{t\in[0,T]}$ an $\dot{H}^{-1}(\Gamma_\cdot)$-valued random process if for any $t\in[0,T]$, $u(t)$ is an $\dot{H}^{-1}(\Gamma_t)$-valued random variable. Concerning predictability,  an $\dot{H}^{-1}(\Gamma_\cdot)$-valued random process $\{u(t)\}_{t\in[0,T]}$ is said to be predictable with respect to the filtration $\{\mathcal{F}_t\}_{t\in[0,T]} $ if the transformed process $\widetilde{u(\cdot)}^{\cdot,*}$ is an $\dot{H}^{-1}({\Gamma_0})$-valued predictable process. Similarly, we can define predictability for $H^1(\Gamma_\cdot)$-valued stochastic process by the operators $\widetilde{\ \cdot \ }^{t}:H^{1}(\Gamma_t)\rightarrow H^{1}(\Gamma_0)$ and $\bar{\ \cdot \ }^{t}:H^{1}(\Gamma_0)\rightarrow H^{1}(\Gamma_t)$. \par Having defined predictability, we can now describe the stochastic term in (\ref{pme}). In ($\ref{pme}$), we assume that the Wiener noise $B$ is an $l^2$ cylindrical Wiener process represented by $\{\beta^k\}_{k\geq 1}$. The diffusion coefficient $\sigma(\cdot,\cdot)$ is represented as a family of mappings $\big\{\{\sigma_k(t,\cdot):\dot{H}^{-1}(\Gamma_t)\rightarrow \dot{H}^{-1}(\Gamma_t)\}_{t\in[0,T]}\big\}_{k\geq 1}$ such that the transformed diffusion coefficient
	\begin{align}\label{05070010}
		\widetilde{\sigma}_k(\cdot ,\cdot):[0,T]\times \dot{H}^{-1}(\Gamma_0)\rightarrow \dot{H}^{-1}(\Gamma_0),
	\end{align}
	defined by
	\begin{align}\label{06212153}
		\widetilde{\sigma}_k(t ,v)\ \dot{=}\ \widetilde{\sigma}_k^{t,*}(t,\bar{v}^{t,*}),
	\end{align}
	for any $(t,v)\in[0,T]\times \dot{H}^{-1}(\Gamma_0)$,
	is measurable for every $k\geq 1$. Consequently, for any $\dot{H}^{-1}(\Gamma_0)$-valued predictable process $v(\cdot)$,
	the $\dot{H}^{-1}(\Gamma_\cdot)$-valued random process
	$$t\rightarrow \widetilde{\sigma}_k(t,{v(t)})={\sigma}_k(t,{\bar{v}^{t,*}(t)})\in \dot{H}^{-1}(\Gamma_\cdot)\text{ is predictable.} $$
	Then, for any $H^1(\Gamma_\cdot)$-valued predictable process $\phi$ and any $\dot{H}^{-1}(\Gamma_\cdot)$-valued predictable process ${u}(\cdot)$, the real-valued process
	$$t\rightarrow_{H^{-1}(\Gamma_t)}\langle \sigma_k(t,u(t)), \phi(t)\rangle_{H^1(\Gamma_t)}=_{H^{-1}(\Gamma_0)}\langle \widetilde{\sigma}^t_k(t,\widetilde{u}^{t,*}(t)), \widetilde{\phi}^t(t)\rangle_{H^1(\Gamma_0)} $$
	is predictable. Thus, with appropriate conditions imposed on $\sigma_k$, $u$ and $\phi$, the stochastic integral
	\begin{align}\label{05070036}
		\int_{0}^{t}\ _{H^{-1}(\Gamma_s)}\langle \sigma_k(s,u(s)),\phi(s)\rangle_{H^1(\Gamma_s)}d\beta^k_s
	\end{align}
	is well-defined.
	To solve equation (\ref{pme}), we introduce linear growth condition and Lipschitz condition on the diffusion coefficients $\{\sigma_k\}_{k\geq 1}$. Let's assume there exists $f\in L^1([0,T],\mathbb{R}_+)$ such that for a.e. $t\in[0,T]$, and for any $u,v\in \dot{H}^{-1}(\Gamma_t)$,
	\begin{align}
		&\sum_{k=1}^{\infty}\|\sigma_k(t,u)-\sigma_k(t,v)\|^2_{\dot{H}^{-1}(\Gamma_t)}\leq f(t)\|u-v\|^2_{\dot{H}^{-1}(\Gamma_t)}\label{lp},\\& \sum_{k=1}^{\infty}\|\sigma_k(t,u)\|_{\dot{H}^{-1}(\Gamma_t)}^2\leq f(t)(\|u\|^2_{\dot{H}^{-1}(\Gamma_t)}+1).\label{lg}
	\end{align}
	Now we can give a precise definition of solutions to equation (\ref{pme}). Building upon Corollary \ref{lf}, we give a weak formulation of the stochastic equations on moving hypersurfaces.
	\begin{definition}\label{ws}
		Let $x_0\in\dot{H}^{-1}(\Gamma_0)$, $p>1$. We say an $\dot{H}^{-1}(\Gamma_\cdot)$-valued random process $\{u(t)\}_{t\in[0,T]}$ is a solution to (\ref{pme}) if
		\begin{itemize}{}
			\item [(i)]: $u\in C([0,T], \dot{H}^{-1}\big(\Gamma_\cdot)\big)\cap L^p\big([0,T],\dot{L}^p(\Gamma_\cdot)\big)$ $a.e.$ and
			$$ E\Big[\sup_{t\in[0,T]}\|u(t)\|^2_{\dot{H}^{-1}(\Gamma_t)}+\int_{0}^{T}\|u(t)\|_{L^p(\Gamma_t)}^pdt\Big]<\infty $$
			\item [(ii)]: $u$ is an $\dot{H}^{-1}({\Gamma_\cdot})$-valued predictable random process with regard to the filtration $\{\mathcal{F}_t\}_{t\in[0,T]} $.
			\item[(iii)]: For any $\phi\in C^2(Q_T)$ and $t\in[0,T]$, we have
			\begin{align}\label{05082042}
				&\nonumber\ \ \ _{H^{-1}(\Gamma_t)}\langle u(t),\phi(t)\rangle_{H^{1}(\Gamma_t)}-_{H^{-1}(\Gamma_0)}\langle x_0,\phi(0)\rangle_{H^{1}(\Gamma_0)} \\&\nonumber=\int_{0}^{t}\int_{{\Gamma_s}} \Psi(u(s,x))\Delta_{\Gamma_s}\phi(s,x)\mathrm{dvol_{\Gamma_s}}(dx)ds+\int_{0}^{t}\ _{H^{-1}(\Gamma_s)}\langle u(s),\partial^\bullet\phi(s)\rangle_{H^{1}(\Gamma_s)}ds\\
				&+ \sum_{k=1}^{\infty}\int_{0}^{t}\ _{H^{-1}(\Gamma_s)}\langle \sigma_k(s,u(s)),\phi(s)\rangle_{H^1(\Gamma_s)}d\beta^k_s.
			\end{align}
		\end{itemize}
	\end{definition}
	\begin{remark}
		The weak formulation (iii) gives a mathematical explanation why zero average is preserved despite the evolving of the hypersurfaces. Indeed, setting $\phi\equiv 1$ in (iii), we obtain $_{H^{-1}({\Gamma_t})}\langle u(t),1\rangle_{H^1({\Gamma_t})}= _{H^{-1}({\Gamma_0})}\langle x_0,1\rangle_{H^1({\Gamma_0})}$ for any $t\in[0,T]$. Thus, the condition $x_0\in\dot{H}^{-1}(\Gamma_0)$ implies $u(t)\in \dot{H}^{-1}(\Gamma_t)$ for any $t\in[0,T]$ .
	\end{remark}
	Now we can state the main result of Section 3.
	\begin{theorem}\label{main result}
		Assuming $x_0\in \dot{H}^{-1}(\Gamma_0)$, the function $\Psi$ in equation $(\ref{pme})$ satisfies (\hyperlink{$\Psi$1}{$\Psi$1})-(\hyperlink{$\Psi$4}{$\Psi$4}) with $p\in[\frac{2n}{n+2},\infty)$ if $n=1,2$ and $p\in[\frac{2n}{n+2},\frac{2n}{n-2}]$ if $n\geq3$. Then the equation (\ref{pme}) has a unique solution.
	\end{theorem}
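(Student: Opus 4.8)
The plan is to transform (\ref{pme}) from the moving hypersurface onto the fixed surface $\Gamma_0$ and fit the resulting equation into the nonhomogeneous monotonicity framework, so that Theorem \ref{wp} applies. I would work in the Gelfand triple $V=\dot L^p(\Gamma_0)\hookrightarrow H=\dot H^{-1}(\Gamma_0)\hookrightarrow V^*=[\dot L^p(\Gamma_0)]^*$ with $\alpha=p$, where the embedding $V\hookrightarrow H$ is exactly the one recorded in Subsection 3.2; this forces the lower restriction $p\geq\frac{2n}{n+2}$, while for $n\geq3$ the upper restriction $p\leq\frac{2n}{n-2}$ enters through the uniform-in-$t$ Sobolev embeddings (Proposition \ref{poincare}) and the elliptic estimates used below. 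On $H$ I would introduce the family of pullback inner products $(u,v)_t:=(\bar u^{t,*},\bar v^{t,*})_{\dot H^{-1}(\Gamma_t)}$, which coincides with the base inner product at $t=0$ since $G(0,\cdot)=I_d$.

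First I would verify (C1)--(C4) for this family. Condition (C1) is immediate from the uniform-in-$t$ operator-norm bounds on the isomorphisms $\widetilde{\ \cdot\ }^{t,*}$ and $\bar{\ \cdot\ }^{t,*}$. For (C2) I would differentiate $t\mapsto|u|_t^2$: using $G\in C^1([0,T],C^2(\Gamma_0))$ and the $C^1$-in-time Radon--Nikodym density (\ref{05092313})--(\ref{05101834}), the derivative can be written as $(u,\Phi(s)u)$ for a self-adjoint, integrably bounded family $\Phi(s)$ computed from the time-variation of the metric $g^{t,\alpha}$ and of $\Delta_{\Gamma_t}$. The conditions (C3)--(C4), namely that $\iota_t^*$ preserves $V=\dot L^p$ with bounded inverse, are the most delicate, since the $\dot H^{-1}$ inner product involves $(-\Delta_{\Gamma_t})^{-1}$ and hence $\iota_t^*$ mixes multiplication by the Jacobian with solution operators of elliptic problems on $\Gamma_t$; here I would invoke the uniform-in-$t$ $L^p$-elliptic regularity for the family $\{\Delta_{\Gamma_t}\}_{t\in[0,T]}$, which is exactly where the admissible range of $p$ is used.

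Next I would identify the transformed drift $A(t,\cdot)$ and diffusion $B(t,\cdot)=\big(\widetilde\sigma_k(t,\cdot)\big)_{k\geq1}$ by pushing the weak formulation of Definition \ref{ws} through the transport and Leibniz formulas (Corollary \ref{lf}): the principal part comes from $\Delta_{\Gamma_t}\Psi(\cdot)$, while the term $X\nabla_{\Gamma_t}\cdot v$ and the $t$-dependence of the transformation contribute only lower-order pieces. The key observation, exactly as in the motivating example culminating in (\ref{08201711}), is that it is $\iota_t^*A(t,\cdot)$, not $A(t,\cdot)$, that inherits monotonicity: in $\dot H^{-1}(\Gamma_t)$ one has $(\Delta_{\Gamma_t}\Psi(a)-\Delta_{\Gamma_t}\Psi(b),a-b)_{\dot H^{-1}(\Gamma_t)}=-\int_{\Gamma_t}(\Psi(a)-\Psi(b))(a-b)\,\mathrm{dvol}_{\Gamma_t}\leq0$ by ($\Psi$2), which after transformation becomes $\langle\iota_t^*A(t,u)-\iota_t^*A(t,v),u-v\rangle\leq0$ up to the lower-order terms absorbed into $f(t)+\rho(v)$, yielding (H2); the noise contribution there is controlled by the Lipschitz bound (\ref{lp}). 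Coercivity (H3) follows from ($\Psi$3) with $\alpha=p$ and the identity $\langle\iota_t^*A(t,u),u\rangle\approx-\int\Psi(u)u$, growth (H4) from ($\Psi$4), hemicontinuity (H1) from ($\Psi$1), and (H5) from the linear-growth bound (\ref{lg}).

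Finally, Theorem \ref{wp} yields a unique $H$-valued solution of the transformed SPDE, and I would transport it back to a solution of (\ref{pme}) on $Q_T$ using the equivalence established in Subsection 3.5, which rests on the correspondence between test functions $\phi\in C^2(Q_T)$ and admissible test elements on $\Gamma_0$; uniqueness on $\Gamma_0$ then gives uniqueness for (\ref{pme}). I expect the main obstacle to be the simultaneous verification of (C2)--(C4): constructing the self-adjoint differentiating family $\Phi$, and above all establishing the uniform-in-time elliptic regularity that makes $\iota_t^*$ a bounded isomorphism of $\dot L^p(\Gamma_0)$, since this is precisely where the geometry of the moving surface and the admissible range of $p$ genuinely interact.
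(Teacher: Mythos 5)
Your proposal follows essentially the same route as the paper: the same Gelfand triple $\dot L^p(\Gamma_0)\hookrightarrow\dot H^{-1}(\Gamma_0)\hookrightarrow[\dot L^p(\Gamma_0)]^*$ with $\alpha=p$, the same pullback inner products (the paper realizes them via $R_t=-\frac{\mathrm{dvol}_{g^t}}{\mathrm{dvol}_g}\Delta_{g^t}$ so that $|\widetilde f^{t,*}|_t=\|f\|_{\dot H^{-1}(\Gamma_t)}$), verification of (C1)--(C4) via uniform-in-$t$ $L^p$ elliptic regularity in local charts, the solution equivalence through the test-function correspondence, and verification of (H1)--(H5) from ($\Psi$1)--($\Psi$4) and (\ref{lp})--(\ref{lg}) before invoking Theorem \ref{wp}. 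The argument and its division into the three steps match the paper's proof.
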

	\begin{proof}
		We will apply Theorem 2.8. The proof is divided into three parts, which are given in the next three subsections. We first verify conditions (\hyperlink{C1}{C1})-(\hyperlink{C4}{C4}) in Subsection 3.4. Then, through transformation we will show that the existence and uniqueness of a solution to equation (\ref{pme}) is equivalent to the existence and uniqueness of a solution to the SPDE (\ref{05100037}). The proof of this correspondence is given in Subsection 3.5. At last, in Subsection 3.6, we show that the SPDE (\ref{05100037}) is well-posed by further verifying conditions (\hyperlink{H1}{H1})-(\hyperlink{H5}{H5}) given in Section 2.3.
	\end{proof}
	\vskip 0.5cm
	\ \ \ As a by-product of the well-posedness, we obtain an It$\mathrm{\hat{o}}$ type formula for the $\|\cdot\|_{\dot{H}^{-1}(\Gamma_t)}$ norm of the solution to equation (\ref{pme}). One can see how the evolution of the hypersurface affects the $\dot{H}^{-1}$-norm of the solution from this formula. Recall the definition of the speed $v$ in $(\ref{08270324})$, we have
	\begin{theorem}
		Under the same assumptions as in Theorem \ref{main result}, the solution $X$ to (\ref{pme}) satisfies the following energy equality $P-a.s.$,
		\begin{align*}
			\nonumber&\ |X_t|_{\dot{H}^{-1}(\Gamma_t)}^2\\=&\nonumber\ |X_0|_{\dot{H}^{-1}(\Gamma_0)}^2-2\int_{0}^{t}\int_{{\Gamma_s}}\Psi\big(X_s(x)\big)X_s(x)\mathrm{dvol}_{\Gamma_s}(dx)ds\\&\ +2\sum_{k=1}^{\infty}\int_{0}^{t}\big(\sigma_k(s,X_s),X_s\big)_{\dot{H}^{-1}(\Gamma_s)}d\beta^k_s +\sum_{k=1}^{\infty}\int_{0}^{t}|{\sigma}_k(s,Y_s)|_{\dot{H}^{-1}(\Gamma_s)}^2ds\\&\ -\int_{0}^{t}\int_{{\Gamma_s}}\mathcal{B}(s,v)\nabla_{\Gamma_s}(-\Delta_{\Gamma_s})^{-1}X_s\cdot \nabla_{\Gamma_s}(-\Delta_{\Gamma_s})^{-1}X_s\mathrm{dvol}_{\Gamma_s}(dx)ds,
		\end{align*}
		where $\mathcal{B}(s,v)$ is the deformation tensor, which will be given in Subsection 3.7 in detail.
	\end{theorem}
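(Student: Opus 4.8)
The plan is to read off this energy equality from the abstract It\^o formula of Theorem \ref{ito}, applied to the \emph{transformed} solution on the fixed pivot space $H=\dot H^{-1}(\Gamma_0)$, and then to translate each resulting term back onto the moving hypersurface, the genuinely new work being the identification of the geometric $\Phi$-term with the deformation-tensor integral. Let $\bar X_s=\widetilde{X_s}^{s,*}\in\dot H^{-1}(\Gamma_0)$ denote the transform of the solution $X$, which by the equivalence established in Subsection 3.5 solves the transformed SPDE on $H=\dot H^{-1}(\Gamma_0)$ in the nonhomogeneous-monotonicity framework with $V=\dot L^p(\Gamma_0)$. By the very construction of the family of inner products $(\cdot,\cdot)_s$ in Subsection 3.4 we have the isometry $|\bar X_s|_s^2=\|X_s\|_{\dot H^{-1}(\Gamma_s)}^2$, while the abstract coefficients satisfy $Y_s=A(s,\bar X_s)$ and $Z_s f_k=\widetilde{\sigma}_k(s,\bar X_s)$, i.e. they are the transforms of $\Delta_{\Gamma_s}\Psi(X_s)$ and $\sigma_k(s,X_s)$. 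Since the solution enjoys the regularity in Definition \ref{ws}(i), the transformed process belongs to the function classes ($L^\alpha$ in $V$, $H$-continuous, $V^*$-valued semimartingale) required by Theorem \ref{ito}, so the abstract formula applies verbatim.

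The drift and noise terms are then matched directly. Using the self-adjointness of $\iota_s^*$ in the dual pairing, the drift contribution $2\langle Y_s,\iota_s^*\bar X_s\rangle=2\langle \iota_s^*A(s,\bar X_s),\bar X_s\rangle$ equals $2\,_{\dot H^{-1}(\Gamma_s)}\langle \Delta_{\Gamma_s}\Psi(X_s),X_s\rangle$ after transformation. Writing this $\dot H^{-1}$-pairing through the potential $w_s:=(-\Delta_{\Gamma_s})^{-1}X_s$ and integrating by parts on $\Gamma_s$ collapses the Laplacian against the inverse Laplacian, giving $-2\int_{\Gamma_s}\Psi(X_s(x))X_s(x)\,\mathrm{dvol}_{\Gamma_s}(dx)$, which is the second term of the claimed identity. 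Likewise, $\|Z_s\|^2_{L_2(U,H^s)}$ becomes $\sum_k|\sigma_k(s,X_s)|^2_{\dot H^{-1}(\Gamma_s)}$ and the stochastic term $2\int_0^t(\bar X_s,Z_s\,dW_s)_s$ becomes $2\sum_k\int_0^t(\sigma_k(s,X_s),X_s)_{\dot H^{-1}(\Gamma_s)}\,d\beta^k_s$, both straight from the definitions of the weighted Hilbert--Schmidt norm and of the notation $(\cdot,\cdot)_s$.

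The main obstacle is the remaining term $\int_0^t(\bar X_s,\Phi(s)\bar X_s)\,ds$, which must be shown to equal $-\int_0^t\int_{\Gamma_s}\mathcal{B}(s,v)\,\nabla_{\Gamma_s}w_s\cdot\nabla_{\Gamma_s}w_s\,\mathrm{dvol}_{\Gamma_s}(dx)\,ds$. By (\hyperlink{C2}{C2}), $(\cdot,\Phi(s)\cdot)$ is exactly the time-derivative of the pulled-back inner product, so this reduces to differentiating $s\mapsto\|X_s\|^2_{\dot H^{-1}(\Gamma_s)}=\int_{\Gamma_s}|\nabla_{\Gamma_s}w_s|^2\,\mathrm{dvol}_{\Gamma_s}$ with the material content of $X_s$ frozen. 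I would carry this out via the transport theorem (Theorem \ref{tf}) and the Leibniz rule (Corollary \ref{lf}), the crucial input being a commutation identity for the material derivative and the tangential gradient of the form $\partial^\bullet(\nabla_{\Gamma_s}w)=\nabla_{\Gamma_s}(\partial^\bullet w)-\mathcal{B}(s,v)\nabla_{\Gamma_s}w$, in which the deformation tensor $\mathcal{B}(s,v)$ emerges from the time-derivative of the induced metric $g^s_{ij}$. The delicate point is establishing this geometric identity and ensuring all the integrals are meaningful for an $\dot H^{-1}$-valued (rather than smooth) solution; here the $C^1$-in-time regularity of $G$, the uniform metric equivalence (\ref{05101834}), and the elliptic regularity of $(-\Delta_{\Gamma_s})^{-1}$ are what make the formal computation rigorous. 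Assembling the three term identifications inside the abstract formula of Theorem \ref{ito} then yields the stated energy equality.
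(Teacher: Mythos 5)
Your proposal is correct and follows essentially the same route as the paper: apply Theorem \ref{ito} to the transformed equation (\ref{05100037}), translate the drift, quadratic-variation and martingale terms back to $\Gamma_s$ via the isometries (\ref{05082011}) and (\ref{05100141})--(\ref{05100143}), and identify $\int_0^t(\Phi(s)Y_s,Y_s)\,ds$ with the deformation-tensor integral. The only cosmetic difference is in that last step: the paper reads the identification directly off the local-chart formula (\ref{05092048}), where $(\Phi(s)Y_s,Y_s)$ already appears as $-\sum_\alpha\int\chi^\alpha\,\partial_{\theta^i}[R_{-s}Y_s]\,\partial_{\theta^j}[R_{-s}Y_s]\,\tfrac{d}{ds}\bigl(\sqrt{g^\alpha(s,\theta)}\,g^{ij}_{s,\alpha}\bigr)\,d\theta$, and cites (5.7) of \cite{DE} for $\mathcal{B}(s,v)$, whereas you propose to rederive that geometric identity from the transport theorem and the commutation of $\partial^\bullet$ with $\nabla_{\Gamma_s}$ --- the same computation in a different guise.
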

	The proof is given in Subsection 3.7.
	\subsection{ Verification of (\hyperref[C1]{C1})-(\hyperref[C4]{C4}) }
	\ \ \ \ In this subsection, we will construct a family of equivalent norms that associated  with the function spaces defined on time-dependent hypersurfaces $\{\Gamma_t\}_{t\in[0,T]}$ and we will verify the conditions (\hyperlink{C1}{C1})-(\hyperlink{C4}{C4}).
	
	According to Proposition \ref{poincare},  $\dot{H}^1(\Gamma_0)$ can be endowed with the norm $\|\nabla_{\Gamma_0}(\cdot)\|_{L^2(\Gamma_0)}$. Consequently, $\dot{H}^{-1}(\Gamma_0)$ is isometrically isomorphic to $\big(\dot{H}^{1}(\Gamma_0)\big)^*$. Let's consider the Gelfand triple $V\dot{=}\ \dot{L}^p(\Gamma_0)\hookrightarrow H\dot{=}\ \dot{H}^{-1}(\Gamma_0)\hookrightarrow V^*\dot{=}\ [\dot{L}^p(\Gamma_0)]^*$, where $V$ is reflexive and  densely embedded into $H$ under the assumption that $p\in[\frac{2n}{n+2},\frac{2n}{n-2}]$ if $n\geq3$ or $p\in[\frac{2n}{n+2},+\infty)$ if $n$=1 or 2. \par Now define the Laplace  operator $(-\Delta_{\Gamma_0}):\dot{H}^1(\Gamma_0)\rightarrow\dot{H}^{-1}(\Gamma_0)$ such that for $u,v\in \dot{H}^1(\Gamma_0)$,
	$$_{H^{-1}(\Gamma_0)}\langle -\Delta_{\Gamma_0} u,v\rangle_{H^{1}(\Gamma_0)}=\int_{\Gamma_0}\nabla_{\Gamma_0} u(y)\cdot \nabla_{\Gamma_0}v(y)\ \mathrm{dvol}_{\Gamma_0}(dy)=(u,v)_{\dot{H}^1(\Gamma_0)}.$$
	This implies that $(-\Delta_{\Gamma_0}):\dot{H}^1(\Gamma_0)\rightarrow\dot{H}^{-1}(\Gamma_0)$ is the Riesz dual operator for the inner product $(\cdot,\cdot)_{\dot{H}^1(\Gamma_0)}$, which is an isometric isomorphism. Then, for any $f,g\ \in \dot{H}^{-1}(\Gamma_0)$,  an inner product $(\cdot ,\cdot )_0$ can be constructed on $\dot{H}^{-1}(\Gamma_0)$, represented by
	$$(f,g)_0= \Big((-\Delta_{\Gamma_0})^{-1}f,(-\Delta_{\Gamma_0})^{-1}g\Big)_{\dot{H}^1(\Gamma_0)}=_{{H}^{-1}(\Gamma_0)}\big\langle f, (-\Delta_{\Gamma_0})^{-1}g\big\rangle_{{H}^{1}(\Gamma_0)},$$ such that for any $f\in \dot{H}^1(\Gamma_0)$,
	$$ |f|_0\dot{=}\sqrt{(f,f)_0}=\sup_{u\in \dot{H}^1(\Gamma_0)}\frac{_{H^{-1}(\Gamma_0)}\langle f,u\rangle_{H^1(\Gamma_0)}}{\|u\|_{\dot{H}^1(\Gamma_0)}}.$$
	Moreover, for $t\geq 0$,  we can create a new inner product on  $\dot{H}^1(\Gamma_0)$ denoted as $(\cdot\ ,\ \cdot)_{\dot{H}^1(\Gamma_0^t)}$, such that for $u,v\in \dot{H}^1(\Gamma_0)$
	\begin{align}\label{05091809}
		(u,v)_{\dot{H}^1(\Gamma_0^t)}&\dot{=}\sum_{\alpha=1}^{N}\int_{{U^\alpha}}\chi ^\alpha(X^\alpha(\theta))g^{ij}_{t,\alpha}(\theta)\frac{\partial u}{\partial \theta_i}\frac{\partial v}{\partial \theta_j}\sqrt{g^\alpha(t,\theta)}d\theta\nonumber\\&\Big(=\int_{\Gamma_0}\big(\mathrm{grad}^t u,\mathrm{grad}^t v)_{g^t}\mathrm{dvol}_{g^t}\Big),
	\end{align}
	where $g^{t}$ is the pullback Riemannian metric from $\Gamma_t$ to $\Gamma_0$, locally defined by (\ref{05082113}), and $\mathrm{grad}^t u$ denotes the gradient of $u$ with respect to  the Riemannian metric $g^{t}$, whose definition can be seen e.g.  in (3.1.16) and (3.1.17) in \cite{J}. We also denote the norm corresponding to the inner product (\ref{05091809}) by $\|\cdot\|_{\dot{H}^1(\Gamma_0^t)}$.\par
	For any $t\in[0,T]$ and $u\in {H}^1(\Gamma_t)$, we can verify that
	\begin{align}\label{05111935}
		\|\widetilde{u}^t\|_{\dot{H}^1(\Gamma^t_0)}=\|u\|_{\dot{H}^1(\Gamma_t)}.
	\end{align}
	Using the  local chart, by the compactness of the support of $\chi^\alpha$ and the compactness of $\Gamma_0$, we can also deduce that for any $u\in\dot{H}^1(\Gamma_0)$, $t\in[0,T]$,
	\begin{align}\label{05072145}
		\|u\|_{\dot{H}^1(\Gamma_0)}\approx\|u\|_{\dot{H}^1(\Gamma^t_0)}.
	\end{align}
	Due to (\ref{05072145}), we can define a family of equivalent norms $\{|\cdot|_{t}\}_{t\in[0,T]}$ on the space $\dot{H}^{-1}(\Gamma_0)$ given by
	\begin{align}\label{06220053}
		|f|_t\dot{=}\sup_{u\in \dot{H}^1(\Gamma_0)}\frac{_{H^{-1}(\Gamma_0)}\langle f,u\rangle_{H^{1}(\Gamma_0)}}{\|u\|_{\dot{H}^1(\Gamma_0^t)}}.
	\end{align}
	Moreover, for $f\in\dot{H}^{-1}(\Gamma_t)$, we infer from (\ref{05111935}),
	\vskip -0.7cm
	\begin{align}\label{05082011}
		\nonumber&\|f\|_{\dot{H}^{-1}(\Gamma_t)}=\sup_{v\in H^1(\Gamma_t),v\neq 0}\frac{_{{H}^{-1}(\Gamma_t)}\langle f,v\rangle_{{H}^{1}(\Gamma_t)}}{\centering{\|\nabla_{\Gamma_t}v\|}_{L^2(\Gamma_t)}}= \sup_{v\in H^1(\Gamma_t),v\neq 0}\frac{_{{H}^{-1}(\Gamma_0)}\langle \widetilde{f}^{t,*},\widetilde{v}^t\rangle_{{H}^{1}(\Gamma_0)}}{\centering{\|\nabla_{\Gamma_t}v\|}_{L^2(\Gamma_t)}}\\=& \sup_{v\in H^1(\Gamma_t),v\neq 0}\frac{_{{H}^{-1}(\Gamma_0)}\langle \widetilde{f}^{t,*},\widetilde{v}^t\rangle_{{H}^{1}(\Gamma_0)}}{\centering{\|\mathrm{grad}^t \widetilde{v}^t\|}_{L^2(\mathrm{dvol}_{g^t})}}= \sup_{u\in {H}^1(\Gamma_0),u\neq 0}\frac{_{{H}^{-1}(\Gamma_0)}\langle \widetilde{f}^{t,*},u\rangle_{{H}^{1}(\Gamma_0)}}{\centering{\|u\|}_{\dot{H}^1(\Gamma_0^t)}}=|\widetilde{f}^{t,*}|_t.
	\end{align}
	We'd like to emphasize that this observation motivates the introduction of the equivalent norms $|\cdot|_t$ on $\dot{H}^{-1}(\Gamma_0)$.\par
	Similarly, we define a family of operators $\{R_t :\dot{H}^1(\Gamma_0)\rightarrow \dot{H}^{-1}(\Gamma_0)\}_{t\in[0,T]}$, denoted by $R_t\ \dot{=}\ (-\frac{\mathrm{dvol}_{g^t}}{\mathrm{dvol}_{g}}\Delta_{g^t}):\dot{H}^1(\Gamma_0)\rightarrow \dot{H}^{-1}(\Gamma_0)$, such that for any $u,v\in \dot{H}^{1}(\Gamma_0) $,
	\begin{align}\label{05091808}
		_{H^{-1}(\Gamma_0)}\langle R_tu,v\rangle_{H^1(\Gamma_0)}= _{H^{-1}(\Gamma_0)}\langle-\frac{\mathrm{dvol}_{g^t}}{\mathrm{dvol}_{g}}\Delta_{g^t}u,v\rangle_{H^{1}(\Gamma_0)}=(u,v)_{\dot{H}^1(\Gamma_0^t)}.
	\end{align}
	As before, this operator serves as the Riesz dual operator for the Hilbert space $\dot{H}^1(\Gamma_0)$ equipped with the inner product $(\cdot,\cdot)_{\dot{H}^1(\Gamma_0^t)}$, which means for any $v\in \dot{H}^1(\Gamma_0)$,
	\begin{align}\label{08262014}
		|R_tv|_{t}=\sup_{u\in {H}^1(\Gamma_0),u\neq 0}\frac{_{{H}^{-1}(\Gamma_0)}\langle R_t v,u\rangle_{{H}^{1}(\Gamma_0)}}{\centering{\|u\|}_{\dot{H}^1(\Gamma_0^t)}}=\|v\|_{\dot{H}^1(\Gamma^t_0)}.
	\end{align}
	Therefore, it is also an isomorphism and we denote its inverse as $R_{-t}:\ \dot{H}^{-1}(\Gamma_0)\rightarrow \dot{H}^1(\Gamma_0)$.  Additionally, $\dot{H}^{-1}(\Gamma_0)$ can  be equipped with an inner product $(\cdot,\cdot)_t$, defined by
	\vskip -0.7cm
	\begin{align*}
		(f,g)_t&\dot{=}\ (R_{-t}f,R_{-t}g)_{\dot{H}^1(\Gamma_0^t)}=_{{H}^{-1}(\Gamma_0)}\langle f, R_{-t}g\rangle_{{H}^1(\Gamma_0)}\\&=_{{H}^{-1}(\Gamma_0)}\langle f,R_{-0} R_0R_{-t}g\rangle_{{H}^1(\Gamma_0)}=(f,R_0R_{-t}g)_0\\&=(f,\iota_t^*g)_0.
	\end{align*}
	where $\iota_t^*:\dot{H}^{-1}(\Gamma_0)\rightarrow\dot{H}^{-1}(\Gamma_0)$, is defined as $\iota_t^*g=R_0R_{-t}g$. By the property of the Riesz dual operator, we have for any $f\in \dot{H}^{-1}(\Gamma_0)$,
	\begin{align}\label{07011926}
		(f,f)_t=|f|^2_t.
	\end{align}
	\par
	\vskip 0.3cm
	\noindent {\bf Verification of (\hyperlink{C1}{C1})}:
	The condition (\hyperlink{C1}{C1}) follows directly from (\ref{05072145}), (\ref{06220053}) and (\ref{07011926}).\\
	\vskip 0.3cm
	\noindent {\bf Verification of (\hyperlink{C2}{C2})}: From (\ref{05091809}), (\ref{05091808}) and the regularity assumptions on $X^\alpha(\cdot)$ and $G(\cdot,\cdot)$, we assert that
	\begin{align}\label{05091824}
		R_\cdot\in C^1\Big([0,T],\mathcal{L}\big(\dot{H}^1(\Gamma_0),\dot{H}^{-1}(\Gamma_0)\big)\Big),
	\end{align}
	with its time-derivative represented as  $\Psi(s):\dot{H}^1(\Gamma_0)\rightarrow \dot{H}^{-1}(\Gamma_0)$, given by,
	\begin{align}\label{05112023}	\Psi(s)v\dot{=}\frac{d}{ds}\big(-\frac{\mathrm{dvol}_{g^s}}{\mathrm{dvol} _g}(y)\Delta_{g^s}v\big)=\frac{d}{ds}\big(R_sv\big),
	\end{align}
	such that for any $v,w\in \dot{H}^{1}(\Gamma_0)$,
	$$_{{H}^{-1}(\Gamma_0)}\langle \Psi(s)v,w\rangle_{{H}^{1}(\Gamma_0)}=\sum_{\alpha=1}^{N}\int_{{U^\alpha}}\chi^\alpha(X^\alpha(\theta))\frac{\partial w}{\partial \theta^i}\frac{\partial v}{\partial \theta^j}\frac{d}{ds}\big(\sqrt{g^\alpha(s,\theta)}g_{s,\alpha}^{ij}(\theta)\big)d\theta.$$
	We can verify that this expression is independent of the choice of local charts.
	
	Due to (\ref{08010005}), the compactness of $supp\chi^\alpha\circ X^\alpha$ in $U^\alpha$, the compactness of $\Gamma_0$, and the regularity of coefficients $\sqrt{g^\alpha(t,\cdot)}g_t^{ij}(\cdot)$, we infer that $\|\Psi(t)\|_{\mathcal{L}(\dot{H}^1(\Gamma_0),\dot{H}^{-1}(\Gamma_0))}\lesssim 1$.
	Let's define
	$$\Phi(s)\dot{=}-R_0 R_{-s}\Psi(s)R_{-s}:\dot{H}^{-1}(\Gamma_0)\rightarrow\dot{H}^{-1}(\Gamma_0). $$
	By (\ref{05091824}) and (\ref{05112023}), we see that
	\begin{align*}
		\iota_\cdot^*\in C^1\Big([0,T],\mathcal{L}\big(\dot{H}^{-1}(\Gamma_0)\big)\Big), with\ \frac{d}{ds}\iota_s^*= \Phi(s).
	\end{align*}
	Moreover, for any $f,g\in \dot{H}^{-1}(\Gamma_0)(=H)$, we have
	\begin{align}\label{05092048}
		\nonumber\big( f,\Phi(s)g\big)_H&=-(f,R_0R_{-s}\Psi(s)R_{-s}g)_H=-_{{H}^{-1}(\Gamma_0)}\langle f,R_{-s}\Psi(s)R_{-s}g\rangle_{{H}^1(\Gamma_0)}\\\nonumber&=-\big( R_{-s}f,R_{-s}\Psi(s)R_{-s}g\big)_{\dot{H}^1(\Gamma_0^s)}=-\ _{{H}^{1}(\Gamma_0)}\langle R_{-s}f,\Psi(s)R_{-s}g\rangle_{{H}^{-1}(\Gamma_0)}\\&=-\sum_{\alpha=1}^{N}\int_{{U^\alpha}}\chi^\alpha(X^\alpha(\theta))\frac{\partial }{\partial \theta^i}[R_{-s}f]\frac{\partial }{\partial \theta^j} [R_{-s}g]\frac{d}{ds}\big(\sqrt{g^\alpha(s,\theta)}g_s^{ij}(\theta)\big)d\theta.
	\end{align}
	This shows that  $\Phi(s):\dot{H}^{-1}(\Gamma_0)\rightarrow \dot{H}^{-1}(\Gamma_0)$ is a self-adjoint operator with $\|\Phi(s)\|_{\mathcal{L}(\dot{H}^{-1}(\Gamma_0))}\lesssim 1$, $0\leq s\leq T$. Furthermore, we have $$\frac{d}{ds}(f,f)_s=\frac{d}{ds}(f,\iota_s^*f)_H=(f,\frac{d}{ds}\iota_s^*f)_H=(f,\Phi(s)f)_{{\dot{H}^{-1}(\Gamma_0)}}.$$
	Consequently, we confirm the validity of $(\hyperlink{C2}{C2})$.\vskip 0.25cm
	\noindent {\bf Verification of (\hyperlink{C3}{C3}) and (\hyperlink{C4}{C4})}:
	Let's take $f\in \dot{L}^p(\Gamma_0)\subseteq \dot{H}^{-1}(\Gamma_0)$.
	By the Riesz representaton theorem, there exist an unique element $v\dot{=}R_{-t}f\in \dot{H}^{1}(\Gamma_0)$ such that for any $u\in \dot{H}^1(\Gamma_0)$
	$$\langle-\frac{\mathrm{dvol}_{g^t}}{\mathrm{dvol}_{g}}\Delta_{g^t}v,u\rangle=(u,v)_{\dot{H}^1(\Gamma_0^t)}=\int_{\Gamma_0}f u \mathrm{dvol}_{\Gamma_0}(y).$$
	By (\ref{05072145}), (\ref{08262014}), (\hyperlink{C1}{C1}) and , we have
	\begin{align}\label{20240508}
		\|v\|_{\dot{H}^1(\Gamma_0)}\approx\|v\|_{\dot{H}^1(\Gamma_0^t)}=|f|_t\approx |f|_0\lesssim |f|_{\dot{L}^p(\Gamma_0)} .
	\end{align}
	Moreover, employing local charts, we observe that locally $v\circ X^\alpha(\theta)$ is a distributional solution of the following elliptic equation on $U^\alpha$,
	\begin{align}\label{05092203}
		-\frac{1}{\sqrt{g^\alpha(\theta)}}\frac{\partial}{\partial \theta_i}\Big(\sqrt{g^\alpha(t,\theta)}g^{ij}_{t,\alpha}(\theta)\frac{\partial v}{\partial \theta_j}(\theta)\Big)=f\big(X^\alpha(\theta)\big).
	\end{align}
	By the local regularity estimates in Theorem 4.2 in Chapter 3 of \cite{CW}, we can find a compact set $K^\alpha$ such that $supp\chi^\alpha\circ X^\alpha\subseteq K^\alpha\subseteq U^\alpha$ and a constant $C^\alpha$ independent of $t$ such that
	\begin{align}\label{05081903}
		\nonumber\|v\circ X^\alpha\|_{W^{2,p}(supp\chi^\alpha\circ X^\alpha)}&\leq C^\alpha\big(\|f\circ X^\alpha\|_{L^p(K^\alpha)}+\|v\circ X^\alpha\|_{L^p(K^\alpha)}\big)\\&\leq C^\alpha\big(\|f\|_{L^p(\Gamma_0)}+\|v\|_{L^p(\Gamma_0)}\big).
	\end{align}
	The second inequality follows from the compactness of $K^\alpha$.
	Additionally, using local charts, the compactness of $\Gamma_0$ and ($\ref{08010005}$), we can deduce that the $H^{2,p}(\Gamma_0)$ norm given in (\ref{05081740}) is equivalent to the norm $\|\cdot\|_{2,p}$ defined by local charts and $\chi^\alpha$,
	\vskip -0.5cm
	\begin{align}\label{05092209}
		\|u\|_{2,p}^p=\sum_{\alpha=1}^N\int_{{U^\alpha}}\chi^\alpha\circ X^\alpha(\theta)\Big(|u|^p(X^\alpha(\theta))+|\nabla \big(u\circ X^\alpha\big)|^p(\theta)+|\nabla^2 \big(u\circ X^\alpha\big)|^p(\theta)\Big)d\theta.
	\end{align}
	Thus, from (\ref{05081903}), (\ref{20240508}) and (ii) of Theorem \ref{poincare}, we infer that
	\begin{align}\label{07020120}
		\|R_{-t}f\|_{{{H}^{2,p}}(\Gamma_0)}=\|v\|_{{H^{2,p}}(\Gamma_0)}\leq C \|f\|_{{L}^p(\Gamma_0)}.
	\end{align}
	Consequently,
	\begin{align*}
		\|\iota_t^*f\|_V=\|\Delta_{\Gamma_0} R_{-t}f\|_{L^p(\Gamma_0)}\leq \|R_{-t}f\|_{{H}^{2,p}(\Gamma_0)}\leq C\|f\|_{\dot{L}^p(\Gamma_0)}=C\|f\|_{V}.
	\end{align*}
	Similarly,  $\|\iota_{-t}^*f\|_V\leq C\|f\|_{V}$, completing the verification of $(\hyperlink{C1}{C1})$-$(\hyperlink{C4}{C4})$.
	\subsection{Equivalence of solutions}
	In this subsection, we will show that the solution of the stochastic Stefan problem  (\ref{pme}) can be transformed to a solution of an SPDE with nonhomogeneous monotonicity in the variational setting and vice versa.
	Let $X$ be a solution to (\ref{pme}) that satisfies Definition \ref{ws}. Let $\phi\in C^2(Q_T)$. Denote $\widetilde{X}^{s,*}_s$ by $Y_s$. We will show that $\{Y_s, s\geq 0\}$ is a solution to a SPDEs in a Gelfand triple: $\dot{L}^p(\Gamma_0)\hookrightarrow \dot{H}^{-1}(\Gamma_0)\hookrightarrow\big(\dot{L}^p(\Gamma_0)\big)^*$.   By Remark \ref{05112043}, we have $Y_s(y)=X_s(G(s,y))\frac{\mathrm{dvol}_{g^s}}{\mathrm{dvol}_{g}}(y)\in \dot{L}^p(\Gamma_0)$ if $X_s\in \dot{L}^p(\Gamma_s)$. We also let $\widetilde{\phi}:[0,T]\times\Gamma_0\rightarrow \mathbb{R}$ be defined as $\widetilde{\phi}(s,y)=\phi\big(s,G(s,y)\big)=\widetilde{\phi}^s(y)$. Using local charts, we can show that for $\phi\in C^2(Q_T)$,
	\begin{align}\label{05100247}
		\widetilde{\Delta_{\Gamma_s} \phi(s)}^{s,*}=\Delta_{g^s}\widetilde{\phi(s)}^s,
	\end{align}
	where $\Delta_{g^s}$ is the Laplacian-Beltrami operator on $\Gamma_0$ associated with the Riemannian metric $g^s$, defined locally by ($\ref{05082113}$).
	
	By (\ref{05082036}), in terms of $Y, \widetilde{\phi}$,   the weak formulation (\ref{05082042}) reads as
	\begin{align}\label{05082045}
		&\nonumber\ \ \ _{H^{-1}(\Gamma_0)}\langle Y_t,\widetilde{\phi}(t)\rangle_{H^{1}(\Gamma_0)}-_{H^{-1}(\Gamma_0)}\langle x_0,\widetilde{\phi}(0)\rangle_{H^{1}(\Gamma_0)} \\&\nonumber=\int_{0}^{t}\int_{{\Gamma_0}} \Psi\big( Y_s(y)\frac{\mathrm{dvol}_{g}}{\mathrm{dvol}_{g^s}}(y)\big)\Delta_{g^s}\widetilde{\phi}(s,y)\mathrm{dvol}_{g^s}(dy)ds+\int_{0}^{t}\ _{H^{-1}(\Gamma_0)}\langle Y_s,\frac{d}{dt}\widetilde{\phi}(s)\rangle_{H^{1}(\Gamma_0)}ds\\
		&+ \sum_{k=1}^{\infty}\int_{0}^{t}\ _{H^{-1}(\Gamma_0)}\langle \widetilde{\sigma}_k(s,Y_s),\widetilde{\phi}(s)\rangle_{H^1(\Gamma_0)}d\beta^k_s.
	\end{align}
	For any $\widetilde{\phi}\in C^2([0,T]\times\Gamma_0)$, writing
	\begin{align}\label{05092136}
		\varphi(s)\dot{=}R_s\big(\widetilde{\phi}(s)-[\widetilde{\phi}(s)]_0\big)\in C^1([0,T],\dot{H}^{-1}(\Gamma_0)),
	\end{align}
	we have  $\widetilde{\phi}(s)-[\widetilde{\phi}(s)]_0=R_{-s}\varphi(s)$. Then,
	\begin{align*}
		\frac{d}{ds}\Big(\widetilde{\phi}(s)-[\widetilde{\phi}(s)]_0\Big)=- R_{-s}\frac{dR_s}{ds}R_{-s}\varphi(s)+R_{-s}\dot{\varphi}(s)\in C\big([0,T],\dot{H}^1(\Gamma_0)\big).
	\end{align*}
	Since $Y_s\in \dot{H}^{-1}(\Gamma_0)$ , we have by (\ref{05092048}), \vskip -0.6cm
	\begin{align}\label{05112119}
		\nonumber&_{{H}^{-1}(\Gamma_0)}\langle Y_s,\frac{d}{ds}\widetilde{\phi(s)}\rangle_{{H}^{1}(\Gamma_0)}=	_{{H}^{-1}(\Gamma_0)}\langle Y_s,\frac{d}{ds}\big(\widetilde{\phi(s)}-[\widetilde{\phi(s)}]_0\big)\rangle_{{H}^{1}(\Gamma_0)}\\=\nonumber& -	_{{H}^{-1}(\Gamma_0)}\langle Y_s,R_{-s}\frac{dR_s}{ds}R_{-s}\varphi(s)\rangle_{{H}^{1}(\Gamma_0)}+_{{H}^{-1}(\Gamma_0)}\langle Y_s,R_{-s}\dot{\varphi}(s)\rangle_{{H}^{1}(\Gamma_0)}\\=& \big( Y_s,\Phi(s)\varphi(s)\big)+\big( Y_s,\dot{\varphi}(s)\big)_s.
	\end{align}
	Similarly, we can show
	\vskip -0.7cm
	\begin{align}
		\label{05100141}&_{{H}^{-1}(\Gamma_0)}\langle Y_t,\tilde{\phi}(t)\rangle_{{H}^{1}(\Gamma_0)}=\big(Y_t,\varphi(t)\big)_t, \\\label{05100142}&\ _{H^{-1}(\Gamma_0)}\langle \sigma_k(s,Y_s),\tilde{\phi}(s)\rangle_{H^1(\Gamma_0)}=\big(\tilde{\sigma}_k(s,Y_s),{\varphi}(s)\big)_s, \\&\ \label{05100143}\int_{{\Gamma_0}} \Psi\big(Y_s(y)\frac{\mathrm{dvol}_{g}}{\mathrm{dvol}_{g^s}}(y)\big)\Delta_{g^s}\widetilde{\phi}(s,y)\mathrm{dvol}_{g^s}(dy)\nonumber\\=&\ -\int_{{\Gamma_0}} \Psi\big(Y_s(y)\frac{\mathrm{dvol}_{g}}{\mathrm{dvol}_{g^s}}(y)\big)\varphi(s,y)\mathrm{dvol}_{\Gamma_0}(dy).
	\end{align}
	Combining (\ref{05112119}) and (\ref{05100141})-(\ref{05100143}) together, we obtain from (\ref{05082045}) that
	\begin{align}\label{05092236}
		\nonumber&\big(Y_t,\varphi(t)\big)_t-\big(Y_0,\varphi(0)\big)_0=-\int_{0}^{t}\hspace{-2mm}\int_{{\Gamma_0}}\Psi\big(Y_s(y)\frac{\mathrm{dvol}_{g}}{\mathrm{dvol}_{g^s}}(y)\big)\varphi(s,y)\mathrm{dvol}_{\Gamma_0}(dy)ds\\+&\sum_{k=1}^{\infty}\int_{0}^{t}\big(\tilde{\sigma}_k(s,Y_s),\varphi(s)\big)_sd\beta^k_s+\int_{0}^{t}\big(Y_s,\dot{\varphi}(s)\big)_sds+\int_{0}^{t}\big(Y_s,\Phi(s)\varphi(s)\big)ds,
	\end{align}
	for any $\varphi$ obtained through (\ref{05092136}). On the other hand, for any $\varphi\in C_c^1\big([0,T),\dot{L}^p(\Gamma_0)\big)$, $\psi(\cdot)\ \dot{=}\ R_{-\cdot}\varphi(\cdot)\in C_c^1([0,T), \dot{H}^{2,p}(\Gamma_0))$. This follows from the property
	$$ R_{\cdot}\in C^1\Big([0,T), \mathcal{L}\big(\dot{H}^{2,p}(\Gamma_0),\dot{L}^p(\Gamma_0)\big) \Big)\text{ and }R_{-\cdot}\in C^1\Big([0,T), \mathcal{L}\big(\dot{L}^p(\Gamma_0),\dot{H}^{2,p}(\Gamma_0)\big) \Big),$$
	due to the regularity in time of the elliptic operator $R_\cdot$ , which is given locally in (\ref{05092203}), and the fact that the norm $\|\cdot\|_{2,p}$ constructed in (\ref{05092209}) is equivalent to the $H^{2,p}(\Gamma_0)$ norm defined by (\ref{05081740}) and (\ref{07020120}). Since $C^2(\Gamma_0)$ is dense in ${H}^{2,p}(\Gamma_0)$, we can construct a sequence $\psi_n\in C_c^2([0,T], C^2(\Gamma_0))\subseteq C^2([0,T]\times\Gamma_0)$ such that $\psi_n$ converges to $\psi$ in $H^{1,p}\big({[0,T], {H}^{2,p}(\Gamma_0)}\big)$. Then we have $$\varphi_n\ \dot{=} R_\cdot\big(\psi_n(\cdot)-[\psi_n(\cdot)]_0 \big)\rightarrow\varphi\text{ in } H^{1,p}([0,T],\dot{L}^p(\Gamma_0)).$$
	By $(i)$ of Definition \ref{ws}, (\hyperlink{C1}{C1}), (\ref{05082011}),  ($\hyperlink{C3}{C3}$), Remark \ref{05112043} and (\ref{05092313}), we take $\varphi=\varphi_n$ in (\ref{05092236}) and let $n\rightarrow\infty$ to deduce that (\ref{05092236}) holds almost surely for any $\varphi\in C_c^1\big([0,T),\dot{L}^p(\Gamma_0)\big)$. Therefore, (\ref{05092236}) is equivalent to the following stochastic equation on $V^*$ in the variational setting,
	\begin{numcases}{} \label{05100000}
		\nonumber d\iota_t^* Y_t=\widetilde{A}(t,Y_t)dt\ + \iota_t^*\tilde{B}(t,Y_t)dW_t+\Phi(t)Y_tdt,\text{ $t\in[0,T]$},\\ Y_0=x_0\in \dot{H}^{-1}(\Gamma_0),
	\end{numcases}
	where $\tilde{B}(\cdot,\cdot):[0,T]\times V\rightarrow L_2(l^2,H)$ is defined as $\tilde{B}(t,u)=\{\tilde{\sigma}_k(t,u)\}_{k\geq 1}$, for $u\in V=\dot{L}^2(\Gamma_0)$ and $W=\{\beta_k\}_{k\geq 1}$ represents an $l^2$-cylindrical Brownian motion,   $\widetilde{A}(\cdot,\cdot):[0,T]\times V\rightarrow V^*$ is given by
	\begin{align*}
		\langle \widetilde{A}(s,u),v\rangle=-\int_{\Gamma_0}\Psi\big((u\frac{\mathrm{dvol}_g}{\mathrm{dvol}_{g^s}})(y)\big)v(y)\mathrm{dvol}_{\Gamma_0}(dy),
	\end{align*}
	for $v\in V$. Notably, by ($\hyperlink{C3}{C3}$) and $(i)$ of Definition \ref{ws}, we deduce that
	$\widetilde{A}(\cdot,Y_\cdot)\in L^\frac{p}{p-1}
	\big(\Omega\times [0,T],  V^*\big)$. Let's denote $\iota_{-\cdot} \widetilde{A}(\cdot,u)$ by $A(\cdot,u)$ for $u\in V$. By the same argument for deducing (\ref{01280342}) from (\ref{05110042}), we conclude that $Y_\cdot$ is the solution of the following SPDE in the Gelfand triple $\dot{L}^p(\Gamma_0)\hookrightarrow \dot{H}^{-1}(\Gamma_0)\hookrightarrow\big(\dot{L}^p(\Gamma_0)\big)^*$:
	\begin{numcases}{} \label{05100037}
		\nonumber dY_t=A(t,Y_t)dt\ + \tilde{B}(t,Y_t)dW_t,\text{ $t\in[0,T]$},\\ Y_0=x_0\in \dot{H}^{-1}(\Gamma_0).
	\end{numcases}
	Thus, we have shown that any solution $X$ to equation (\ref{pme}) can be transformed into a solution $Y$ of the SPDE (\ref{05100037}) by  $Y_s\ \dot{=}\ \widetilde{X}^{s,*}_s$.
	\par
	\vskip 0.3cm
	Conversely, if $Y_t$ be a solution to equation (\ref{05100037}) in the sense of Definition \ref{01252114} and let $X_t\ \dot{=}\ \bar{Y}_t^{t,*}$. Take any $\phi\in C^2(Q_T)$ and denote $\phi\circ \widetilde{G}\in C^2([0,T]\times\Gamma_0)$ by $\widetilde{\phi}$,  $R_\cdot\big(\widetilde{\phi}(\cdot)-[\widetilde{\phi}(\cdot)]_0\big)$ by $\varphi(\cdot)$. Applying Theorem \ref{ito}, we obtain
	\begin{align}\label{05100256}
		\nonumber	&\ \big(Y_t,\varphi(t)\big)_t\\=&\ \nonumber \big(Y_0,\varphi(0)\big)_0+\int_{0}^{t}\langle \widetilde{A}(s,Y_s),\varphi(s)\rangle ds+\int_{0}^{t}\Big(\widetilde{B}(s,Y_s)dW_s,\varphi(s)\Big)_s+\int_{0}^{t}\big( Y_s,\dot{\varphi}(s)\big)_s ds\\&\ +\int_{0}^{t}\big(\Phi(s)Y_s,\varphi(s)\big)ds.
	\end{align}
	Note $$\dot{\varphi}(s)=R_s\dot{\big(\widetilde{\phi}(\cdot)-[\widetilde{\phi}(\cdot)]_0\big)}(s)+\frac{dR_s}{ds}\big(\widetilde{\phi}(s)-[\widetilde{\phi}(s)]_0\big)\in C\big([0,T],\dot{H}^{-1}(\Gamma_0)\big).$$
	Thus
	\begin{align}\label{05100209}
		\nonumber&\int_{0}^{t}\big( Y_s,\dot{\varphi}(s)\big)_s ds\\=&\nonumber\int_{0}^{t}(Y_s,R_s\dot{\big(\widetilde{\phi}(s)-[\widetilde{\phi}(s)]_0\big)}(s))_sds+\int_{0}^{t}\big(Y_s,\frac{dR_s}{ds}\big(\widetilde{\phi}(s)-[\widetilde{\phi}(s)]_0\big)\big)_sds\\=&\int_{0}^{t}\ _{{H}^{-1}(\Gamma_0)}\langle Y_s,\dot{\big(\widetilde{\phi}(\cdot)-[\widetilde{\phi}(\cdot)]_0\big)}(s)\rangle_{{H}^{1}(\Gamma_0)} ds-\int_{0}^{t}\ \big(\Phi(s)Y_s,\varphi(s)\big) ds\nonumber\\=&\int_{0}^{t}\ _{{H}^{-1}(\Gamma_0)}\langle Y_s,\dot{\big(\widetilde{\phi}(s)\big)}\rangle_{{H}^{1}(\Gamma_0)} ds-\int_{0}^{t}\ \big(\Phi(s)Y_s,\varphi(s)\big) ds.
	\end{align}
	By (\ref{05100256}) and (\ref{05100209}), as well as (\ref{05100141})-(\ref{05100143}), we obtain 	(\ref{05082045}). Since $X_t\ {=}\ \bar{Y}_t^{t,*}$ and according to (\ref{05100247}), Remark \ref{05112043} and (\ref{05082036}), we can see that the condition (iii) of Definition \ref{ws} holds for $\phi\in C^2(Q_T)$, representing the weak formulation of the stochastic Stefan problem on moving hypersurface. The other conditions (i) and (ii) in the definition 3.5 follow straightforwardly. Thus the stochastic process ${X}$ on moving hypersurfaces, obtained by $X_t\ \dot{=}\ \bar{Y}_t^{t,*}$, where $Y$ is the variational solution of (\ref{05100037}), is a solution of ($\ref{pme}$).
	\subsection{Verification of (\protect\hyperlink{H1}{H1})-(\protect\hyperlink{H5}{H5}) }
	Since we have established the correspondence  between the solutions to equations (\ref{pme}) and (\ref{05100037}). To obtain the well-posedness of the stochastic Stefan problem (\ref{pme}), it suffices to establish the well posedness of the SPDE (\ref{05100037}). In order to apply  Theorem \ref{wp}, it remains to verify conditions (\protect\hyperlink{H1}{H1})-(\protect\hyperlink{H5}{H5}) for $A(\cdot,\cdot)$ and $\tilde{B}(\cdot,\cdot)$ constructed in  (\ref{05100037}). We first note  that (\ref{lp}) and (\ref{lg})  imply that for any $u,v\in V$ and a.e. $t\in[0,T]$,
	\begin{align*}
		&\|\widetilde{B}(t,v)\|_{L_2(l^2,H)}^2\leq Cf(t)(\|v\|_H^2+1)\text{ and }\\& \|\widetilde{B}(t,v)-\widetilde{B}(t,u)\|_{L_2(l^2,H)}^2\leq Cf(t)\|v-u\|_H^2.
	\end{align*}
	Next we verify  (\protect\hyperlink{H1}{H1})-(\protect\hyperlink{H4}{H4}) for $\iota^*_{t}A(t,\cdot)=\widetilde{A}(t,\cdot)$.
	\\ \noindent {\bf Verification of (\protect\hyperlink{H1}{H1})}: for any $u,v,w\in V$,  $s\in[0,T]$ and $\lambda\in \mathbb{R}$, we have
	\begin{align*}
		\lambda\rightarrow\langle \widetilde{A}(s,u+\lambda v),\omega\rangle=-\int_{{\Gamma_0}}\Psi\Big((u+\lambda v)\frac{\mathrm{dvol}_g}{\mathrm{dvol}_{g^s}}(y)\Big) w(y)\mathrm{dvol}_{\Gamma_0}(dy)
	\end{align*}
	is continuous, by the dominated convergence theorem,  (\hyperlink{$\Psi$1}{$\Psi$1}) and (\hyperlink{$\Psi$4}{$\Psi$4}).\par
	\noindent {\bf Verification of (\protect\hyperlink{H2}{H2})}: for any $u,v\in V$ and  $s\in[0,T]$, by (\hyperlink{$\Psi$2}{$\Psi$2}),
	\begin{align*}
		&\langle \widetilde{A}(s,u)-\widetilde{A}(s,v),u-v\rangle\\=&-\int_{{\Gamma_0}}\Big(\Psi\big( (u \frac{\mathrm{dvol}_g}{\mathrm{dvol}_{g^s}})\big)(y)-\Psi\big( (v \frac{\mathrm{dvol}_g}{\mathrm{dvol}_{g^s}})\big)(y)\Big)\cdot (u-v)(y) \mathrm{dvol}_{\Gamma_0}(dy)\\=&-\int_{{\Gamma_0}}\Big(\Psi\big( (u \frac{\mathrm{dvol}_g}{\mathrm{dvol}_{g^s}})\big)(y)-\Psi\big( (v \frac{\mathrm{dvol}_g}{\mathrm{dvol}_{g^s}})\big)(y)\Big)\cdot (u\frac{\mathrm{dvol}_g}{\mathrm{dvol}_{g^s}}-v\frac{\mathrm{dvol}_g}{\mathrm{dvol}_{g^s}})(y) \mathrm{dvol}_{g^s}(dy)\\\leq&\ 0.
	\end{align*}
	\noindent {\bf Verification of (\protect\hyperlink{H3}{H3})}:  For any $u\in V$ and $s\in[0,T]$, by (\ref{05101834}) and  (\hyperlink{$\Psi$3}{$\Psi$3}), we have for some constant $c>0$ and $C>0$,
	\begin{align*}
		\langle \widetilde{A}(s,u),u\rangle&=-\int_{{\Gamma_0}}\Psi\big( (u \frac{\mathrm{dvol}_g}{\mathrm{dvol}_{g^s}})\big)(y)\cdot (u\frac{\mathrm{dvol}_g}{\mathrm{dvol}_{g^s}})(y) \mathrm{dvol}_{g^s}(dy)\\&\leq -c\|u\|_V^p+C.
	\end{align*}
	\noindent {\bf Verification of (\protect\hyperlink{H4}{H4})}: for any $s\in[0,T]$ and $u\in V$, by (\ref{05101834}) and (\hyperlink{$\Psi$4}{$\Psi$4}), we have for some constant $C>0$,
	\begin{align*}
		\|\widetilde{A}(s,u)\|_{V^*}\leq \|\Psi(u\frac{\mathrm{dvol}_g}{\mathrm{dvol}_{g^s}})\|_{L^\frac{p}{p-1}({\Gamma_0})}\leq C\|u\|^{p-1}_{L^p(\Gamma_0)}+C.
	\end{align*}
	The well-posedness of the Stefan type problem (\ref{pme}) is thus proved.
	\subsection{Stochastic Transport formula for $\dot{H}^{-1}(\Gamma_\cdot)$ norm}
	As a corollary of the preceding subsections, we can derive the a stochastic transport formula for the $\|\cdot\|_{\dot{H}^{-1}(\Gamma_\cdot)}$ norm. We'd like to mention that we can apply similar method to deduce the stochastic transport formula for solutions to the equations mentioned at the begining of this section. Apply Theorem \ref{ito} to equation (\ref{05100037}), we obtain for any $t\in[0,T]$,
	\begin{align}\label{05122329}
		\nonumber&\ |Y_t|_t^2\\=&\nonumber\ |Y_0|_0^2-2\int_{0}^{t}\int_{{\Gamma_0}}\Psi\big(Y_s\frac{\mathrm{dvol}_g}{\mathrm{dvol}_{g^s}}\big)Y_s(y)\mathrm{dvol}_{\Gamma_0}(dy)ds+2\sum_{k=1}^{\infty}\int_{0}^{t}\big(\widetilde{\sigma}_k(s,Y_s),Y_s\big)_sd\beta^k_s\\&\ +\sum_{k=1}^{\infty}\int_{0}^{t}|\widetilde{\sigma}_k(s,Y_s)|_s^2ds+\int_{0}^{t}(\Phi(s)Y_s,Y_s)ds.
	\end{align}
	By (\ref{05100141})-(\ref{05100143}) and (\ref{05082011}), (\ref{05122329}) is written as
	\begin{align}\label{05122330}
		\nonumber&\ |X_t|_{\dot{H}^{-1}(\Gamma_t)}^2\\=&\nonumber\ |X_0|_{\dot{H}^{-1}(\Gamma_0)}^2-2\int_{0}^{t}\int_{{\Gamma_s}}\Psi\big(X_s(x)\big)X_s(x)\mathrm{dvol}_{\Gamma_s}(dx)ds+2\sum_{k=1}^{\infty}\int_{0}^{t}\big(\sigma_k(s,X_s),X_s\big)_{\dot{H}^{-1}(\Gamma_s)}d\beta^k_s\\&\ +\sum_{k=1}^{\infty}\int_{0}^{t}|{\sigma}_k(s,Y_s)|_{\dot{H}^{-1}(\Gamma_s)}^2ds+\int_{0}^{t}(\Phi(s)Y_s,Y_s)ds.
	\end{align}
	Recall by (\ref{05092048}),
	\begin{align*}
		(\Phi(s)Y_s,Y_s)
		=-\sum_{\alpha=1}^{N}\int_{{U^\alpha}}\chi^\alpha(X^\alpha(\theta))\frac{\partial }{\partial \theta^i}[R_{-s}Y_s]\frac{\partial }{\partial \theta^j} [R_{-s}Y_s]\frac{d}{dt}\big(\sqrt{g^\alpha(s,\theta)}g_{s,\alpha}^{ij}(\theta)\big)d\theta.
	\end{align*}
	We can also show $\overline{R_{-s}Y_s}^{s}=(-\Delta_{\Gamma_s})^{-1}X_s\in \dot{H}^1(\Gamma_s)$ for any $s\in[0,T]$. Therefore  (\ref{05122330}) can be expressed as
	\begin{align*}
		\nonumber&\ |X_t|_{\dot{H}^{-1}(\Gamma_t)}^2\\=&\nonumber\ |X_0|_{\dot{H}^{-1}(\Gamma_0)}^2-2\int_{0}^{t}\int_{{\Gamma_s}}\Psi\big(X_s(x)\big)X_s(x)\mathrm{dvol}_{\Gamma_s}(dx)ds+2\sum_{k=1}^{\infty}\int_{0}^{t}\big(\sigma_k(s,X_s),X_s\big)_{\dot{H}^{-1}(\Gamma_s)}d\beta^k_s\\&\ +\sum_{k=1}^{\infty}\int_{0}^{t}|{\sigma}_k(s,Y_s)|_{\dot{H}^{-1}(\Gamma_s)}^2ds-\int_{0}^{t}\int_{{\Gamma_s}}\mathcal{B}(s,v)\nabla_{\Gamma_s}(-\Delta_{\Gamma_s})^{-1}X_s\cdot \nabla_{\Gamma_s}(-\Delta_{\Gamma_s})^{-1}X_s\mathrm{dvol}_{\Gamma_s}(dx)ds,
	\end{align*}
	where $\mathcal{B}(s,v)$ is the tensor given in (5.7) of \cite{DE} with $\mathcal{A}=I$. To be precise,
	\begin{align*}
		B(s,v)=\nabla_{\Gamma_s}\cdot v(s,\cdot)\ I_{(n+1) \times (n+1)}-2D(s,v),
	\end{align*}
	where for $1\leq i,j\leq n+1$,
	\begin{align}
		D(s,v)_{ij}=\frac{1}{2}\Big(\big(\nabla_{\Gamma_s}\big)_i v_j(s,\cdot)+\big(\nabla_{\Gamma_s}\big)_j v_i(s,\cdot)\Big).
	\end{align}
	\appendix
	\renewcommand{\theequation}{A.\arabic{equation}}
	\section{A variational formulation of $\iota_{-t}^*$}
	\setcounter{equation}{0}
	\numberwithin{theorem}{section}
	\begin{lemma}\label{app}
		Let $\{\iota_t^*\}_{t\in[0,T]}$ be a family of the bounded operators on $H$ obtained from a family of inner products $\{(\cdot,\cdot)_t\}_{t\in[0,T]}$ satisfying (\hyperlink{C1}{C1})-(\hyperlink{C4}{C4}), then it holds for any $x,y\in H$, $t\in[0,T]$,
		\begin{align}
			\big(\iota_{-t}^*x,y\big)=(x,y)-\int_{0}^{t}\big(\iota_{-s}^*\Phi(s)\iota_{-s}^*x,y\big)ds.
		\end{align}
	\end{lemma}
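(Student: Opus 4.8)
The plan is to first establish, for each fixed $x\in H$, the $H$-valued identity
\[
\iota_{-t}^* x = x - \int_0^t \iota_{-s}^*\,\Phi(s)\,\iota_{-s}^* x\, ds ,
\]
and then take the inner product with $y$; since $(\cdot,\cdot)_0$ coincides with $(\cdot,\cdot)$ we have $\iota_0^*=\mathrm{Id}$, so the free term is precisely $(x,y)$ and the claim follows at once. The algebraic engine is the resolvent identity $\iota_{-t}^*-\iota_{-s}^*=\iota_{-t}^*(\iota_s^*-\iota_t^*)\iota_{-s}^*$, valid for the bounded invertible operators supplied by Lemma \ref{01201913}(iii). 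Combining it with Lemma \ref{01201913}(i), which gives $\iota_t^*-\iota_s^*=\int_s^t\Phi(r)\,dr$, yields for every $x\in H$
\[
u(t)-u(s) = -\,\iota_{-t}^*\!\int_s^t \Phi(r)\,u(s)\, dr, \qquad u(s):=\iota_{-s}^*x .
\]

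First I would record two continuity facts. Using $\|\iota_{-t}^*\|_{\mathcal L(H)}\le c_1^2$ (Lemma \ref{01201913}(iii)) in the displayed difference gives $\|\iota_{-t}^*-\iota_{-s}^*\|_{\mathcal L(H)}\le c_1^4\int_s^t\|\Phi(r)\|_{\mathcal L(H)}\,dr$; since $\Phi\in L^1([0,T],\mathcal L(H))$ by (C2), the map $s\mapsto\iota_{-s}^*$ is absolutely continuous in operator norm, hence uniformly continuous on $[0,t]$, and likewise $s\mapsto u(s)$ is uniformly continuous in $H$ with $|u(s)|\le c_1^2|x|$. Then I would telescope over a partition $0=t_0<\cdots<t_N=t$,
\[
\iota_{-t}^*x - x = -\sum_{k=1}^{N}\iota_{-t_k}^*\!\int_{t_{k-1}}^{t_k}\Phi(r)\,u(t_{k-1})\, dr,
\]
and let the mesh tend to $0$. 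The difference between the $k$-th summand integrand and the target integrand $\iota_{-r}^*\Phi(r)u(r)$ is $(\iota_{-t_k}^*-\iota_{-r}^*)\Phi(r)u(t_{k-1})+\iota_{-r}^*\Phi(r)\big(u(t_{k-1})-u(r)\big)$, whose $H$-norm is at most $\big(\|\iota_{-t_k}^*-\iota_{-r}^*\|_{\mathcal L(H)}\,c_1^2|x|+c_1^2|u(t_{k-1})-u(r)|\big)\|\Phi(r)\|_{\mathcal L(H)}$. By the uniform continuity above, the bracketed factor is $\le\varepsilon$ once the mesh is small, so the total error is bounded by $C_x\,\varepsilon\int_0^t\|\Phi(r)\|_{\mathcal L(H)}\,dr\to 0$. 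This identifies the limit as $-\int_0^t\iota_{-r}^*\Phi(r)\iota_{-r}^*x\,dr$, yielding the displayed $H$-valued identity; pairing with $y$ completes the proof.

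The main subtlety to watch is that $\Phi$ is only integrable in operator norm and $\mathcal L(H)$ fails the Radon--Nikodym property, so one cannot simply differentiate $t\mapsto\iota_{-t}^*$ pointwise a.e.\ in operator norm and integrate back. Working instead with the $H$-valued maps $u(s)=\iota_{-s}^*x$ (a Hilbert space, which does enjoy the Radon--Nikodym property) together with the Riemann-sum/uniform-continuity argument above is what circumvents this; establishing the operator-norm continuity of $\iota_{-\cdot}^*$ is the one genuinely necessary technical input, and everything else is routine bookkeeping.
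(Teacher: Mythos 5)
Your argument is correct, and it reaches a slightly stronger conclusion than the paper by a genuinely different route. The paper works scalar by scalar: using the same resolvent identity $\iota_{-t}^*-\iota_{-s}^*=\iota_{-s}^*(\iota_s^*-\iota_t^*)\iota_{-t}^*$ and the bound from Lemma \ref{01201913}, it first shows that $t\mapsto(\iota_{-t}^*x,y)$ is absolutely continuous, and then identifies its a.e.\ derivative as $-(\iota_{-s}^*\Phi(s)\iota_{-s}^*x,y)$ by a Lebesgue-point argument (a common Lebesgue set $\Gamma$ for a countable dense family $\{x_i\}$, an $\varepsilon$-approximation to pass from $x_i$ to general $x$, and a quadratic-in-$\int\|\Phi\|$ estimate to kill the cross term $(\iota_t^*-\iota_s^*)(\iota_{-t}^*-\iota_{-s}^*)/(t-s)$); the integral formula then follows from the fundamental theorem of calculus for absolutely continuous functions. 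You instead telescope the exact increment formula $u(t_k)-u(t_{k-1})=-\iota_{-t_k}^*\int_{t_{k-1}}^{t_k}\Phi(r)u(t_{k-1})\,dr$ over a partition and control the replacement of $\iota_{-t_k}^*$ and $u(t_{k-1})$ by $\iota_{-r}^*$ and $u(r)$ via the uniform (indeed absolute) continuity of $s\mapsto\iota_{-s}^*$ in operator norm, which dominates the total error by $C_x\,\varepsilon\int_0^t\|\Phi(r)\|_{\mathcal L(H)}dr$. This avoids the Lebesgue differentiation machinery entirely and produces the identity as a Bochner integral in $H$ rather than merely weakly against each $y$; your remark about $\mathcal L(H)$ lacking the Radon--Nikodym property correctly pinpoints why a naive "differentiate the operator family a.e.\ and integrate back" argument is unavailable, which is exactly the obstruction the paper's Lebesgue-point detour is designed to get around. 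The only housekeeping worth making explicit is the strong measurability of $r\mapsto\Phi(r)z$ (it follows from the weak measurability implicit in (\hyperlink{C2}{C2}) via polarization, plus Pettis's theorem and the separability of $H$), so that the Bochner integrals you manipulate are well defined; with that noted, the proof is complete.
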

	\begin{proof}
		For any $x,y\in H$,
		\begin{align*}
			(\iota_{-t}^*x,y)-(\iota_{-s}^*x,y)=(\iota_{-t}^*x-\iota_{-s}^*x,y)=\big(\iota_{-s}^*(\iota_s^*-\iota_t^*)\iota_{-t}^*x,y\big).
		\end{align*}
		Thus for any finite collection of disjoint intervals $\{[s_i,t_i]\}_{i=1}^N$, we have
		\begin{align*}
			\sum_{i=1}^{N}|(\iota_{-t_i}^*x,y)-(\iota_{-s_i}^*x,y)|=\sum_{i=1}^{N}|\big(\iota_{-s_i}^*(\iota_{s_i}^*-\iota_{t_i}^*)\iota_{-t_i}^*x,y\big)|.
		\end{align*}
		By (ii) and (iii) of Lemma \ref{01201913}, we have
		\begin{align*}
			\sum_{i=1}^{N}|(\iota_{-t_i}^*x,y)-(\iota_{-s_i}^*x,y)|\leq c_1^4|x||y|\sum_{i=1}^{N}\int_{s_i}^{t_i}\|\Phi(r)\|dr.
		\end{align*}
		By (\hyperlink{C2}{C2}), for any $\varepsilon$, there exists $\delta>0$ such that for any finite collection of disjoint intervals $\{[s_i,t_i]\}_{i=1}^N$ with $\sum_{i=1}^{N}|t_i-s_i|\leq\delta$, then we have
		\begin{align*}
			\sum_{i=1}^{N}|(\iota_{-t_i}^*x,y)-(\iota_{-s_i}^*x,y)|\leq \varepsilon.
		\end{align*}
		This implies that for any $x,y\in H$, the function $t\rightarrow(\iota_{-t}^*x,y)$ is absolutely continuous, implying that $t\rightarrow(\iota_{-t}^*x,y)$ is differentiable a.e. Thus it suffices to show that its derivative almost everywhere equals to $\big(\iota_{-\cdot}^*\Phi(\cdot)\iota_{-\cdot}^*x,y) $.\par
		To this end, we consider a countable dense subset of $H$ denoted by $\{x_i\}_{i\geq 1}$, and subsets $\Gamma,\Gamma_1,\Gamma_2\subseteq[0,T]$ defined by
		\begin{align*}
			&\Gamma_1=\scalebox{1.5}[1.5]{$\cap$}_{i,j=1}^\infty \big\{t\in[0,T]:\text{ $t$ is the Lebesgue point of $s\in[0,T]\rightarrow\big(\iota_s^*x_i,x_j\big)$.}\big\}\\&\Gamma_2=\big\{t\in[0,T]:\text{ $t$ is the Lebesgue point of $s\in[0,T]\rightarrow\|\Phi(s)\|$.}\big\}\\&\Gamma=\Gamma_1\cap\Gamma_2.
		\end{align*}
		Clearly, $|\Gamma|=T$.
		We claim that: For any $x,y\in H$, $s\in\Gamma$,
		\begin{align*}
			t\in[0,T]\rightarrow(\iota_t^*x,y)\text{ is differentiable at $s$, with derivative $\big(\Phi(s)x,y\big)$.}
		\end{align*}
		We first prove that for any $j\in\mathbb{N}$, $x\in H$
		\begin{align}
			\lim\limits_{t\rightarrow s}\frac{(\iota_t^*x,x_j)-(\iota_s^*x,x_j)}{t-s}=(\Phi(s)x,x_j).\label{01262034}
		\end{align}
		For any $\varepsilon>0$, there exists $i_0\in\mathbb{N}$ such that $|x-x_{i_0}|\leq\varepsilon$, then
		\begin{align*}
			&\ \frac{(\iota_t^*x,x_j)-(\iota_s^*x,x_j)}{t-s}-(\Phi(s)x,x_j)\\=&\ \Big\{\frac{(\iota_t^*x_{i_0},x_j)-(\iota_s^*x_{i_0},x_j)}{t-s}-(\Phi(s)x_{i_0},x_j)\Big\}+\Big\{(\Phi(s)x_{i_0},x_j)-(\Phi(s)x,x_j)\Big\} \\&\ +\frac{\big(\iota_t^*(x-x_{i_0}),x_j\big)-\big(\iota_s^*(x-x_{i_0}),x_j\big)}{t-s} \\=&\ \mathrm{I+II+III}.
		\end{align*}
		Since $s\in\Gamma$, we can deduce that as $t\rightarrow s$,
		\begin{align}\label{01261952}
			&\mathrm{I}\rightarrow 0,\\
			&|\mathrm{II}|\leq \|\Phi(s)\||x_{i_0}-x||x_j|\leq \varepsilon \|\Phi(s)\||x_j|,\\&
			|\mathrm{III}|\leq\frac{\varepsilon |x_j|}{|t-s|}\int_{s\wedge t}^{s\vee t}\|\Phi(r)\|dr\rightarrow \varepsilon|x_j|\|\Phi(s)\|.\label{01262018}
		\end{align}
		Thus, by first letting $t\rightarrow s$ and then letting $\varepsilon\rightarrow0$, we can obtain
		\begin{align*}
			\big| \frac{(\iota_t^*x,x_j)-(\iota_s^*x,x_j)}{t-s}-(\Phi(s)x,x_j)\big|\rightarrow 0,
		\end{align*}
		which implies (\ref{01262034}). Using the same method, we can also show that for any $s\in\Gamma$, $x,y\in H$,
		\begin{align}
			\lim\limits_{t\rightarrow s}\frac{(\iota_t^*x,y)-(\iota_s^*x,y)}{t-s}=(\Phi(s)x,y).\label{01262036}
		\end{align}
		Now, for $s\in\Gamma$,
		\begin{align*}
			&\big(\frac{\iota_{-t}^*x-\iota_{-s}^*x}{t-s}+\iota^*_{-s}\Phi(s)\iota^*_{-s}x,y\big)=\ -\Big(\iota_{-s}^*\big(\frac{(\iota_t^*-\iota_s^*) \iota_{-t}^*}{t-s}x-\Phi(s)\iota_{-s}^*\big)x,y\Big)\\=\ &-\Big(\iota_{-s}^*\frac{(\iota_t^*-\iota_s^*) (\iota_{-t}^*-\iota_{-s}^*)}{t-s}x,y\Big)-\Big(\big(\iota_{-s}^*\frac{(\iota_t^*-\iota_s^*)\iota_{-s}^*}{t-s}x,y\big)-\big(\iota_{-s}^*\Phi(s)\iota_{-s}^*x,y\big)\Big)\\=\ &-\Big(\frac{(\iota_t^*-\iota_s^*) (\iota_{-t}^*-\iota_{-s}^*)}{t-s}x,\iota_{-s}^*y\Big)-\Big(\big(\frac{(\iota_t^*-\iota_s^*)\iota_{-s}^*}{t-s}x,\iota_{-s}^*y\big)-\big(\Phi(s)\iota_{-s}^*x,\iota_{-s}^*y\big)\Big)\\=\ &\mathrm{I+II}.
		\end{align*}
		Since $s\in\Gamma$, by (\ref{01262036}), we can see that $|\mathrm{II}|\rightarrow 0$ as $t\rightarrow s$. Moreover,
		\begin{align*}
			|\mathrm{I}|&\leq \big|\Big(\frac{(\iota_t^*-\iota_s^*) (\iota_{-t}^*-\iota_{-s}^*)}{t-s}x,\iota_{-s}^*y\Big)\big|\leq\frac{c_1^2|x||y|}{|t-s|}\|\iota_{-t}^*-\iota_{-s}^*\|\int_{s}^{t}\|\Phi(r)\|dr\\&=\frac{c_1^2|x||y|}{|t-s|}\|\iota_{-t}^*\big(\iota_t^*-\iota_s^*\big)\iota_{-s}^* \|\big(\int_{s}^{t}\|\Phi(r)\|dr\big)\\&\leq \frac{c_1^6|x||y|}{|t-s|}\big(\int_{s}^{t}\|\Phi(r)\|dr\big)^2.
		\end{align*}
		Since $s\in\Gamma$, $|\mathrm{I}|\rightarrow 0$ as $t\rightarrow s$. Thus, the proof is complete.
	\end{proof}
\vskip 0.3cm
	\noindent {\large{\bf{{Acknowledgement}}}}
	This work is partially supported by National Key R\&D Program of China (No. 2022 YFA1006001), National Natural Science Foundation of China (No. 12131019, 12371151, 11721101), and the Fundamental Research Funds for the Central Universities(No. WK3470000031, WK0010000089).
	\vskip 0.3cm
	\noindent {\large{\bf{{Data availability}}}}
	Data sharing is not applicable to this article as no datasets were generated or analysed during the current study.
	\vskip 0.4cm
	\noindent {\Large{\bf{{Declarations}}}}
	\vskip 0.4cm
	\noindent {\large{\bf{{Conflict of Interest}}}} On behalf of all authors, the corresponding author states that there is no conflict of interest.
	
\end{document}